\newtheorem{theorem}{Theorem}[section]
\numberwithin{equation}{theorem}
\newtheorem{lemma}[theorem]{Lemma}
\newtheorem{corollary}[theorem]{Corollary}
\theoremstyle{definition}
\newtheorem{remark}[theorem]{Remark}
\theoremstyle{conjecture}
\newtheorem*{theorem*}{Theorem}
\newcommand{\Ker}{\operatorname{Ker}}
\newcommand{\Coker}{\operatorname{Coker}}
\newcommand{\im}{\operatorname{Im}}
\newcommand{\Cone}{\operatorname{Cone}}
\newcommand{\Ext}{\operatorname{Ext}}
\newcommand{\Tor}{\operatorname{Tor}}
\newcommand{\Hom}{\operatorname{Hom}}
\newcommand{\Obj}{\operatorname{Obj}}
\newcommand{\Mor}{\operatorname{Mor}}
\newcommand{\op}{\operatorname{op}}
\newcommand{\Nor}{\operatorname{Nor}}
\newcommand{\DK}{\operatorname{DK}}
\newcommand{\Sym}{\operatorname{Sym}}
\newcommand{\Ho}{\operatorname{Ho}}
\newcommand{\Der}{\operatorname{Der}}
\newcommand{\ev}{\operatorname{ev}}
\newcommand{\Barr}{\operatorname{Bar}}
\newcommand{\Diag}{\operatorname{Diag}}
\newcommand{\suchthat}{\;\ifnum\currentgrouptype=16 \middle\fi|\;}
\newcommand{\hocolim@}[2]{%
  \vtop{\m@th\ialign{##\cr
    \hfil$#1\operator@font holim$\hfil\cr
    \noalign{\nointerlineskip\kern1.5\ex@}#2\cr
    \noalign{\nointerlineskip\kern-\ex@}\cr}}%
}
\newcommand{\hocolim}{%
  \mathop{\mathpalette\hocolim@{\rightarrowfill@\textstyle}}\nmlimits@
}
\newcommand{\holim@}[2]{%
  \vtop{\m@th\ialign{##\cr
    \hfil$#1\operator@font holim$\hfil\cr
    \noalign{\nointerlineskip\kern1.5\ex@}#2\cr
    \noalign{\nointerlineskip\kern-\ex@}\cr}}%
}
\newcommand{\holim}{%
  \mathop{\mathpalette\holim@{\leftarrowfill@\textstyle}}\nmlimits@
}
\def\@secnumfont{\bfseries}
\def\section{\@startsection{section}{1}%
  \z@{.7\linespacing\@plus\linespacing}{.5\linespacing}%
  {\normalfont\Large\bfseries\filcenter}}
\def\subsection{\@startsection{subsection}{2}%
  \z@{.5\linespacing\@plus.7\linespacing}{-.5em}%
  {\normalfont\large\bfseries}}
\DeclareFontFamily{OT1}{pzc}{}
\DeclareFontShape{OT1}{pzc}{m}{it}{<-> s * [1.20] pzcmi7t}{}
\DeclareMathAlphabet{\mathpzc}{OT1}{pzc}{m}{it}
\def\moverlay{\mathpalette\mov@rlay}
\def\mov@rlay#1#2{\leavevmode\vtop{%
   \baselineskip\z@skip \lineskiplimit-\maxdimen
   \ialign{\hfil$\m@th#1##$\hfil\cr#2\crcr}}}
\newcommand{\charfusion}[3][\mathord]{
    #1{\ifx#1\mathop\vphantom{#2}\fi
        \mathpalette\mov@rlay{#2\cr#3}
      }
    \ifx#1\mathop\expandafter\displaylimits\fi}
\providecommand{\bigsqcap}{%
  \mathop{%
    \mathpalette\@updown\bigsqcup
  }%
}
\newcommand*{\@updown}[2]{%
  \rotatebox[origin=c]{180}{$\m@th#1#2$}%
}
\begin{document}

\author[Hossein Faridian]{Hossein Faridian}

\title[Quillen's Fundamental Spectral Sequences Revisited]
{Quillen's Fundamental Spectral Sequences Revisited}

\address{Hossein Faridian, School of Mathematical and Statistical Sciences, Clemson University, SC 29634, USA.}
\email{hfaridi@g.clemson.edu}

\subjclass[2010]{18N40; 18G31; 18G50; 18N50.}

\keywords {spectral sequence; chain complex; simplicial module; simplicial algebra; cotangent complex; Andr\'{e}-Quillen homology and cohomology}

\begin{abstract}
Quillen's fundamental spectral sequences relate Andr\'{e}-Quillen homology and cohomology to Tor and Ext functors. The five-term exact sequences arising from these spectral sequences are leveraged to characterize regular and complete intersection local rings. Despite their immense importance in the theory of Andr\'{e}-Quillen homology and cohomology, these spectral sequences are not treated in the literature as they merit. A sketchy argument with gaps for a special case of the first spectral sequence appears in an unpublished manuscript of Quillen, while the second spectral sequence is only stated without proof in another paper of Quillen. A rigorous proof of the spectral sequences requires a delicate investigation of the subtle structures involved. The goal of this expository article is to present a comprehensive and detailed proof of Quillen's fundamental spectral sequences in a readable and well-structured manner.
\end{abstract}

\maketitle

\sloppy
\raggedbottom

\tableofcontents

\section{Introduction}

In the late 60's, M. Andr\'{e} and D. Quillen independently introduced a (co)homology theory for commutative algebras that now goes by the name of Andr\'{e}-Quillen (co)homology. The theory was later developed for schemes and more generally for ringed topoi by L. Illusie. Andr\'{e} uses ad hoc definitions and constructions, Quillen deploys his model category theory, and Illusie utilizes Grothendieck topos theory; see \cite{An}, \cite{Qu3}, and \cite{Il}. The theory of Andr\'{e}-Quillen (co)homology has countless applications in local algebra, algebraic geometry, and number theory: characterization of regular and complete intersection local rings, characterization of separable field extensions, characterization of unramified extensions of number fields, and deformation theory of schemes, to name a few. For more applications of the theory, refer to \cite{MR}, \cite{Av1}, \cite{Iy}, \cite{BI}, and \cite{GS}.

To get a flavor of the theory, let $R$ be a commutative ring, $A$ a commutative $R$-algebra, and $M$ an $A$-module. A derivation of $A$ over $R$ with coefficients in $M$ is an $R$-homomorphism $D:A\rightarrow M$ that satisfies the Leibniz rule, i.e. $D(ab)=aD(b)+ bD(a)$ for every $a,b \in A$. The set of all derivations of $A$ over $R$ with coefficients in $M$, denoted by $\Der_{R}(A,M)$, is an $A$-submodule of $\Hom_{R}(A,M)$. In addition, the induced functor $\Der_{R}(A,-):\mathcal{M}\mathpzc{od}(A)\rightarrow \mathcal{M}\mathpzc{od}(A)$ is representable, i.e. there exists an $A$-module $\Omega_{A|R}$, the so-called module of differentials of $A$ over $R$, and a universal derivation $d_{A|R}:A \rightarrow \Omega_{A|R}$ that induces a natural $A$-isomorphism $\Der_{R}(A,M) \cong \Hom_{A}\left(\Omega_{A|R},M \right)$. The module of differentials $\Omega_{A|R}$ can be concretely described as $\Omega_{A|R}=\mathfrak{k}/\mathfrak{k}^{2}$ where $\mathfrak{k}=\Ker(\mu)$ in which $\mu: A\otimes_{R}A \rightarrow A$ is given by $\mu(a\otimes b)=ab$ for every $a,b\in A$. In addition, the universal derivation $d_{A|R}:A \rightarrow \Omega_{A|R}$ is given by $d_{A|R}(a)=(1\otimes a - a\otimes 1) + \mathfrak{k}^{2}$ for every $a\in A$. Now assume that $R \rightarrow S \rightarrow T$ are two homomorphisms of commutative rings, and $M$ is a $T$-module. Then one has the following exact sequences:
$$\Omega_{S|R}\otimes_{S}M \rightarrow \Omega_{T|R}\otimes_{T}M \rightarrow \Omega_{T|S}\otimes_{T}M \rightarrow 0$$
and
$$0 \rightarrow \Der_{S}(T,M) \rightarrow \Der_{R}(T,M) \rightarrow \Der_{R}(S,M)$$
Historically speaking, people grew interested in extending these sequences beyond the above-displayed three terms. Grothendieck extended the second sequence to a six-term exact sequence by the Exalcomm functor, Lichtenbaum and Schlessinger extended it further to a nine-term exact sequence, and finally, Andr\'{e} and Quillen were able to extend the sequences indefinitely; see \cite{Gr}, \cite{LS}, \cite{An}, \cite{Qu1}, and \cite{Qu3}. Viewing derivations and differentials as functors of commutative algebras, one requires to have some sort of derived functors in order to extend the sequences. Applying the general theory of derived functors in model categories to the category of simplicial commutative algebras, one can construct the cotangent complex $\mathbb{L}^{A|R}\in \Obj\left(\mathcal{D}(A)\right)$ as the image of the identity map $1^{A}:A\rightarrow A$ under the left derived functor of the abelianization functor followed by the right derived functor of the normalization functor. The cotangent complex is in turn used to define the Andr\'{e}-Quillen homology and cohomology modules as $H_{i}^{AQ}\left(A|R;M \right)= \Tor_{i}^{A}\left(\mathbb{L}^{A|R},M\right)$ and $H_{AQ}^{i}\left(A|R;M \right)= \Ext_{A}^{i}\left(\mathbb{L}^{A|R},M\right)$ for every $i\geq 0$. These modules enable one to extend the above-mentioned sequences to the following exact sequences:
\begin{gather*}
\cdots \rightarrow H_{2}^{AQ}\left(S|R;M \right)\rightarrow H_{2}^{AQ}\left(T|R;M \right)\rightarrow H_{2}^{AQ}\left(T|S;M \right)\rightarrow H_{1}^{AQ}\left(S|R;M \right)\rightarrow \\
H_{1}^{AQ}\left(T|R;M \right) \rightarrow
H_{1}^{AQ}\left(T|S;M \right)\rightarrow \Omega_{S|R}\otimes_{S}M \rightarrow \Omega_{T|R}\otimes_{T}M \rightarrow \Omega_{T|S}\otimes_{T}M \rightarrow 0
\end{gather*}
and
\begin{gather*}
  0 \rightarrow \Der_{S}(T,M) \rightarrow \Der_{R}(T,M) \rightarrow \Der_{R}(S,M) \rightarrow H_{AQ}^{1}\left(T|S;M\right) \rightarrow H_{AQ}^{1}\left(T|R;M\right) \rightarrow \\
H_{AQ}^{1}\left(S|R;M\right) \rightarrow H_{AQ}^{2}\left(T|S;M\right) \rightarrow H_{AQ}^{2}\left(T|R;M\right) \rightarrow H_{AQ}^{2}\left(S|R;M\right) \rightarrow \cdots
\end{gather*}

Upon further development of the theory, Quillen constructed two spectral sequences that relate Andr\'{e}-Quillen homology and cohomology to Tor and Ext functors. The five-term exact sequences resulting from these spectral sequences are as follows:
$$\Tor_{3}^{R}(A,M)\rightarrow H_{3}^{AQ}(A|R;M) \rightarrow \textstyle\bigwedge_{A}^{2}\Tor_{1}^{R}(A,A)\otimes_{A}M \rightarrow \Tor_{2}^{R}(A,M) \rightarrow H_{2}^{AQ}(A|R;M) \rightarrow 0$$
and
$$0\rightarrow H_{AQ}^{2}(A|R;M)\rightarrow \Ext_{R}^{2}(A,M)\rightarrow \Hom_{A}\left(\textstyle\bigwedge_{A}^{2}\Tor_{1}^{R}(A,A),M\right)\rightarrow H_{AQ}^{3}(A|R;M)\rightarrow \Ext_{R}^{3}(A,M)$$
These sequences are used to characterize regular local rings as well as complete intersection local rings as follows.

\begin{theorem*}
Let $(R,\mathfrak{m},k)$ be a noetherian local ring. Then the following assertions are equivalent:
\begin{enumerate}
\item[(i)] $R$ is a regular ring.
\item[(ii)] $H_{i}^{AQ}(k|R;V)=0$ for every vector space $V$ over $k$ and every $i\geq 2$.
\item[(iii)] $H_{2}^{AQ}(k|R;k)=0$.
\item[(iv)] $H_{AQ}^{i}(k|R;V)=0$ for every vector space $V$ over $k$ and every $i\geq 2$.
\item[(v)] $H_{AQ}^{2}(k|R;k)=0$.
\end{enumerate}
\end{theorem*}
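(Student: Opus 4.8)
The plan is to exploit that $k$ is a field, so that André--Quillen (co)homology of $k$ over $R$ is controlled entirely by the graded $k$--vector space $H_{\bullet}\!\left(\mathbb{L}^{k|R}\right)$, and then to pin down its degree--$2$ piece by means of a minimal Cohen presentation together with the Jacobi--Zariski sequence (or, in the same spirit, the five--term sequence of Quillen's first spectral sequence). \textbf{Step 1 (reduction to linear algebra).} Since $\mathbb{L}^{k|R}\in\Obj\!\left(\mathcal{D}(k)\right)$ and $k$ is a field, $\mathbb{L}^{k|R}$ splits in $\mathcal{D}(k)$ as $\bigoplus_{i\ge 0}H_{i}\!\left(\mathbb{L}^{k|R}\right)[i]$. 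Because $\otimes_{k}$ and $\Hom_{k}$ are exact, this yields natural isomorphisms $H_{i}^{AQ}(k|R;V)\cong H_{i}\!\left(\mathbb{L}^{k|R}\right)\otimes_{k}V$ and $H_{AQ}^{i}(k|R;V)\cong\Hom_{k}\!\left(H_{i}\!\left(\mathbb{L}^{k|R}\right),V\right)$ for every $k$--vector space $V$ and every $i\ge 0$. In particular $H_{i}^{AQ}(k|R;V)=0$ for all $V$ if and only if $H_{i}\!\left(\mathbb{L}^{k|R}\right)=0$ if and only if $H_{AQ}^{i}(k|R;V)=0$ for all $V$; hence (ii)$\Leftrightarrow$(iv) and (iii)$\Leftrightarrow$(v), and it suffices to establish the cycle (i)$\Rightarrow$(ii)$\Rightarrow$(iii)$\Rightarrow$(i), of which (ii)$\Rightarrow$(iii) is immediate.

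\textbf{Step 2 ((i)$\Rightarrow$(ii)).} If $R$ is regular, then $\mathfrak{m}$ is generated by an $R$--regular sequence $x_{1},\dots,x_{d}$, so $k=R/(x_{1},\dots,x_{d})$ is the quotient of $R$ by a regular sequence. It is a standard property of the cotangent complex that in this case $\mathbb{L}^{k|R}$ is represented by the conormal module $\mathfrak{m}/\mathfrak{m}^{2}$ placed in homological degree $1$; thus $H_{i}\!\left(\mathbb{L}^{k|R}\right)=0$ for all $i\ne 1$, and by Step 1 we get $H_{i}^{AQ}(k|R;V)=0$ for every $k$--vector space $V$ and every $i\ge 2$.

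\textbf{Step 3 ((iii)$\Rightarrow$(i)).} Now $H_{2}\!\left(\mathbb{L}^{k|R}\right)=H_{2}^{AQ}(k|R;k)=0$ and we must deduce that $R$ is regular. Since $R\to\widehat{R}$ is flat with $k\otimes_{R}\widehat{R}=k$, flat base change for the cotangent complex gives $\mathbb{L}^{k|\widehat{R}}\simeq\mathbb{L}^{k|R}$, and $R$ is regular if and only if $\widehat{R}$ is; so we may assume $R$ complete and fix a minimal Cohen presentation $R=Q/I$ with $(Q,\mathfrak{n},k)$ regular local and $I\subseteq\mathfrak{n}^{2}$. Applying the transitivity (Jacobi--Zariski) sequence recalled in the introduction to $Q\to R\to k$ with coefficients in $k$ produces the exact sequence
\[
H_{2}^{AQ}(R|Q;k)\to H_{2}^{AQ}(k|Q;k)\to H_{2}^{AQ}(k|R;k)\to H_{1}^{AQ}(R|Q;k)\xrightarrow{\ \rho\ }H_{1}^{AQ}(k|Q;k).
\]
By Step 2 applied to the regular ring $Q$ we have $H_{2}^{AQ}(k|Q;k)=0$. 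For the surjection $Q\to R=Q/I$ one has $H_{0}\!\left(\mathbb{L}^{R|Q}\right)=\Omega_{R|Q}=0$ and $H_{1}\!\left(\mathbb{L}^{R|Q}\right)\cong I/I^{2}$, whence $H_{1}^{AQ}(R|Q;k)\cong I/I^{2}\otimes_{R}k\cong I/\mathfrak{n}I$, while $H_{1}^{AQ}(k|Q;k)\cong\mathfrak{n}/\mathfrak{n}^{2}$; under these identifications $\rho$ is induced by the inclusion $I\hookrightarrow\mathfrak{n}$ followed by reduction modulo $\mathfrak{n}^{2}$, hence $\rho=0$ because $I\subseteq\mathfrak{n}^{2}$. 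Therefore $H_{2}^{AQ}(k|R;k)\cong I/\mathfrak{n}I$, and vanishing of the left side forces $I=\mathfrak{n}I$, so $I=0$ by Nakayama's lemma, i.e.\ $R=Q$ is regular. Combining Steps 1--3 then gives all five equivalences.

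\textbf{A variant, and the main obstacle.} Thematically one may instead feed the hypothesis into the five--term exact sequence of Quillen's first spectral sequence with $A=k$ and $M=k$, which identifies $H_{2}^{AQ}(k|R;k)$ with $\Coker\!\left(\textstyle\bigwedge_{k}^{2}\Tor_{1}^{R}(k,k)\to\Tor_{2}^{R}(k,k)\right)$; since $\dim_{k}\bigwedge_{k}^{2}\Tor_{1}^{R}(k,k)=\binom{n}{2}$ with $n=\dim_{k}\mathfrak{m}/\mathfrak{m}^{2}$, its vanishing gives $\dim_{k}\Tor_{2}^{R}(k,k)\le\binom{n}{2}$, and the classical lower bound $\dim_{k}\Tor_{2}^{R}(k,k)\ge\binom{n}{2}$, with equality precisely when $R$ is regular, then concludes. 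I expect the genuine work to sit in Step 3: apart from citing precisely that the cotangent complex of a quotient by a regular sequence is its conormal module in degree $1$ (needed in Step 2), one must carefully justify the flat base change along $R\to\widehat{R}$ and, above all, identify the maps in the Jacobi--Zariski sequence---the isomorphism $H_{1}^{AQ}(Q/I\,|\,Q;k)\cong I/\mathfrak{n}I$ and the vanishing of $\rho$---which is exactly the delicate tracking of the natural transformations underlying $\mathbb{L}$ that the paper stresses must be carried out with care.
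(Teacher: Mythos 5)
Your argument is correct, and it is essentially the proof that the paper itself defers to: the paper cites \cite[Proposition 8.12]{Iy} rather than proving the theorem, and that reference proceeds exactly along your Steps 1--3 --- split $\mathbb{L}^{k|R}$ over the field $k$ so that (ii), (iii), (iv), (v) all collapse to $H_{2}\left(\mathbb{L}^{k|R}\right)=0$; use $\mathbb{L}^{k|R}\simeq\left(\mathfrak{m}/\mathfrak{m}^{2}\right)[1]$ for a regular sequence to get (i)$\Rightarrow$(ii); and deduce (iii)$\Rightarrow$(i) from a minimal Cohen presentation $R=\widehat{R}=Q/I$ with $I\subseteq\mathfrak{n}^{2}$ via the Jacobi--Zariski sequence, where $\rho=0$ forces $H_{2}^{AQ}(k|R;k)\cong I/\mathfrak{n}I$ and Nakayama finishes. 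One small technical point you gloss over but which does hold: $H_{1}^{AQ}(R|Q;k)\cong H_{1}\left(\mathbb{L}^{R|Q}\right)\otimes_{R}k$ because $\mathbb{L}^{R|Q}$ is $1$-connected ($H_{0}=\Omega_{R|Q}=0$), so the universal-coefficient spectral sequence degenerates in that degree; this is what licenses the identification with $I/\mathfrak{n}I$. Your variant via the five-term sequence of Theorem \ref{05.4} is thematically closer to this paper's actual contribution, but note that the closing step there (``$\dim_{k}\Tor_{2}^{R}(k,k)=\binom{n}{2}$ iff $R$ regular'') is not free: the standard way to see it is the deviation formula $\dim_{k}\Tor_{2}^{R}(k,k)=\binom{n}{2}+\mu(I)$ coming from the same Cohen presentation, so the variant is not genuinely an independent route.
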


\begin{proof}
See \cite[Propositions 8.12]{Iy}.
\end{proof}

\begin{theorem*}
Let $(R,\mathfrak{m},k)$ be a noetherian local ring. Then the following assertions are equivalent:
\begin{enumerate}
\item[(i)] $R$ is a complete intersection ring.
\item[(ii)] $H_{i}^{AQ}(k|R;V)=0$ for every vector space $V$ over $k$ and every $i\geq 3$.
\item[(iii)] $H_{3}^{AQ}(k|R;k)=0$.
\item[(iv)] $H_{4}^{AQ}(k|R;k)=0$.
\item[(v)] $H_{i}^{AQ}(k|R;k)=0$ for every $i\gg 0$.
\item[(vi)] $H_{AQ}^{i}(k|R;V)=0$ for every vector space $V$ over $k$ and every $i\geq 3$.
\item[(vii)] $H_{AQ}^{3}(k|R;k)=0$.
\item[(viii)] $H_{AQ}^{4}(k|R;k)=0$.
\item[(ix)] $H_{AQ}^{i}(k|R;k)=0$ for every $i\gg 0$.
\end{enumerate}
\end{theorem*}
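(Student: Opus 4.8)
The plan is to strip away the formal reductions, settle the easy direction (i)$\Rightarrow$(ii) by transitivity of the cotangent complex, and reduce everything else to a single rigidity statement about the higher Andr\'{e}-Quillen homology of $R$ over itself. First I would reduce to the complete case: flat base change for the cotangent complex along $R\to\hat{R}$ gives $\mathbb{L}^{k|\hat{R}}\simeq\hat{R}\otimes_R^{\mathbb{L}}\mathbb{L}^{k|R}\simeq\mathbb{L}^{k|R}$ (the last equivalence because $\hat{R}$ is $R$-flat with $\hat{R}\otimes_R k=k$), hence $H_i^{AQ}(k|R;V)\cong H_i^{AQ}(k|\hat{R};V)$ for all $i$ and all $k$-vector spaces $V$, and since $R$ is a complete intersection precisely when $\hat{R}$ is, we may assume $R$ complete. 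Next I would use that $\mathbb{L}^{k|R}$ is an object of $\mathcal{D}(k)$ over the field $k$, hence formal: $\mathbb{L}^{k|R}\simeq\bigoplus_i H_i^{AQ}(k|R;k)[i]$. It follows that $H_i^{AQ}(k|R;V)\cong H_i^{AQ}(k|R;k)\otimes_k V$ and $H^i_{AQ}(k|R;V)\cong\Hom_k\!\big(H_i^{AQ}(k|R;k),V\big)$, so each of (ii) and (vi) is equivalent to its $V=k$ specialization, while (vi)$\Leftrightarrow$(ii), (vii)$\Leftrightarrow$(iii), (viii)$\Leftrightarrow$(iv), (ix)$\Leftrightarrow$(v). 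Everything then collapses to the equivalence of (i) through (v), of which (ii)$\Rightarrow$(iii), (iv), (v) are trivial and (v) forces $H_n^{AQ}(k|R;k)=0$ for infinitely many $n\ge3$.

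For (i)$\Rightarrow$(ii), write $R=Q/I$ with $(Q,\mathfrak{n},k)$ regular local and $I$ generated by a regular sequence $f_1,\dots,f_c$. Then $\mathbb{L}^{R|Q}\simeq(I/I^2)[1]$ with $I/I^2$ free of rank $c$ over $R$, so $\mathbb{L}^{R|Q}\otimes_R^{\mathbb{L}}k\simeq k^c[1]$, while $\mathbb{L}^{k|Q}\simeq(\mathfrak{n}/\mathfrak{n}^2)[1]\simeq k^{\dim Q}[1]$ because $Q$ is regular. The transitivity triangle $\mathbb{L}^{R|Q}\otimes_R^{\mathbb{L}}k\to\mathbb{L}^{k|Q}\to\mathbb{L}^{k|R}\to\big(\mathbb{L}^{R|Q}\otimes_R^{\mathbb{L}}k\big)[1]$ then exhibits $\mathbb{L}^{k|R}$ as the cone of a morphism $k^c[1]\to k^{\dim Q}[1]$, whose homology is concentrated in degrees $1$ and $2$; hence $H_i^{AQ}(k|R;k)=0$ for every $i\ge3$, which is (ii).

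The crucial point is then: $H_n^{AQ}(k|R;k)=0$ for a single $n\ge3$ forces $R$ to be a complete intersection. This covers (iii) and (iv) outright, covers (v) because the latter supplies such an $n$, and --- combined with (ii)$\Rightarrow$(iii) --- yields (ii)$\Rightarrow$(i), so the cycle of implications closes. Using the identity $\dim_k H_n^{AQ}(k|R;k)=\varepsilon_n(R)$ for $n\ge1$, the hypothesis says the $n$-th deviation of $R$ vanishes for one $n\ge3$, and the conclusion is the rigidity of deviations: Gulliksen's theorem in odd degree (in particular $n=3$) and Avramov's theorem on even deviations in even degree (in particular $n=4$); the return trip from ``$H_i^{AQ}(k|R;k)=0$ for all $i\ge3$'' to the complete intersection property goes through the acyclic-closure (Tate resolution) description of the latter. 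The explicit low-degree input that drives the induction is the five-term exact sequence of the first fundamental spectral sequence, $\Tor_3^R(k,k)\to H_3^{AQ}(k|R;k)\to\bigwedge^2_k(\mathfrak{m}/\mathfrak{m}^2)\to\Tor_2^R(k,k)\to H_2^{AQ}(k|R;k)\to0$, together with the higher portions of that spectral sequence, read against the Hopf-algebra structure of $\Tor_*^R(k,k)$ and the graded-Lie structure of $H_*^{AQ}(k|R;k)$.

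I expect the decisive obstacle to be exactly this rigidity step --- the impossibility of ``repairing'' an isolated vanishing of Andr\'{e}-Quillen homology in higher degrees. Unlike flat base change, formality over a field, and the transitivity triangle, which are soft homological algebra, it rests on the fine multiplicative and Lie-theoretic structure of the cotangent complex and of $\Tor_*^R(k,k)$ that the body of the paper develops, and is most naturally extracted from the full first fundamental spectral sequence rather than its five-term truncation alone.
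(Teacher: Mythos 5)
The paper itself offers no proof of this theorem: it is stated as motivation in the introduction and the ``proof'' is a citation of \cite[Propositions~8.14]{Iy}, \cite[Chapter~XVII, Th\'{e}or\`{e}me~13]{An}, and \cite[Theorem~1]{Av3}. So there is no in-paper argument to compare yours against, only the references it delegates to.

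Your soft reductions are all sound and match the standard route. Flat base change along $R\to\hat{R}$ together with $\hat{R}\otimes_R k=k$ does give $\mathbb{L}^{k|\hat{R}}\simeq\mathbb{L}^{k|R}$, and $R$ is a complete intersection iff $\hat{R}$ is, so one may assume $R$ complete. Formality of $\mathbb{L}^{k|R}$ as an object of $\mathcal{D}(k)$ correctly collapses (ii), (vi) to their $V=k$ specializations and pairs (iii)$\Leftrightarrow$(vii), (iv)$\Leftrightarrow$(viii), (v)$\Leftrightarrow$(ix). For (i)$\Rightarrow$(ii): with $R=Q/(f_1,\dots,f_c)$, $Q$ regular complete, the identifications $\mathbb{L}^{R|Q}\simeq(I/I^2)[1]$ and $\mathbb{L}^{k|Q}\simeq(\mathfrak{n}/\mathfrak{n}^2)[1]$ together with the transitivity triangle for $Q\to R\to k$ exhibit $\mathbb{L}^{k|R}$ as the cone of a map of shifted vector spaces, hence its homology sits in degrees $1,2$ only; this is the right argument.

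The genuine gap is exactly where you flagged it: the rigidity direction --- that vanishing of a single $H_n^{AQ}(k|R;k)$ for some $n\ge 3$, or vanishing in all large degrees, forces CI --- is not proved but only pointed at, and the attribution is a bit garbled. Via $\dim_k H_n^{AQ}(k|R;k)=\varepsilon_n(R)$ for $n\ge1$, the inputs you need are: Gulliksen's theorem that $\varepsilon_3(R)=0$ implies CI (covering (iii)); the Halperin--Avramov rigidity theorem that $\varepsilon_n(R)=0$ for \emph{any} single $n\ge3$ implies CI (covering (iv), and not really ``Avramov's theorem on even deviations in even degree'' --- it is one theorem covering all $n\ge3$); and Avramov's finite-simplicial-dimension theorem \cite[Theorem~1]{Av3} for (v), that $\varepsilon_i(R)=0$ for $i\gg0$ implies CI. None of these is derivable from the five-term sequence or the fundamental spectral sequences of the present paper alone --- the five-term sequence identifies $H_2^{AQ}$ and $H_3^{AQ}$ with quotients and kernels inside $\Tor_*^R(k,k)$ and is indeed an ingredient, but the deviation-rigidity theorems rest on the graded Lie algebra structure of $\pi^*(R)$ and Poincar\'{e} series identities that lie outside this paper's scope. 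As a consequence your proposal, while a correct roadmap, is in the same position as the paper's two-line ``proof'': it correctly reduces the theorem to the deep rigidity results but does not establish them. That is acceptable if the intent is an outline with citations, but it should not be read as a self-contained proof.
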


\begin{proof}
See \cite[Propositions 8.14]{Iy}, \cite[Chapter XVII, Th\'{e}or\`{e}me 13]{An}, and \cite[Theorem 1]{Av3}.
\end{proof}

\noindent
The latter theorem is leveraged by L. L. Avramov to prove the ascent-descent property of complete intersection local rings, and then to solve their long-sought localization problem; see \cite[Theorem 2 and Corollary 1]{Av2}.

Despite their focal role in the theory of Andr\'{e}-Quillen (co)homology, Quillen's fundamental spectral sequences are not treated the way they deserve. To shed some light on this, we point out that a sketchy argument with gaps for a special case of the first spectral sequence appears in an unpublished manuscript of Quillen (see \cite[Theorem 6.8]{Qu1}), while the second spectral sequence is only stated without proof in another paper of Quillen (see \cite[Theorem 6.8]{Qu3}). In order to construct the first spectral sequence, Quillen forms a filtration from which the spectral sequence is obtained. Moreover, he proves his Connectivity Lemma to establish its convergence. However, it is not obvious how the Connectivity Lemma implies convergence by looking at the standard theorems of convergence in the literature such as \cite[Theorem 2.6]{Mc}, \cite[Theorem 10.14]{Ro}, and \cite[Theorem 5.5.1]{We}. To address this defect, we prove a general theorem that allows one to use the Connectivity Lemma to settle the convergence of the spectral sequences. Also, a spectral sequence arising from a filtration is generally described by its first page, while Quillen's spectral sequences are given by their second pages. A delicate reparametrization is required to reconcile the two which is not addressed in \cite[Theorem 6.8]{Qu1}. The proof of the Connectivity Lemma in \cite[Lemma 6.5 and Theorem 8.8]{Qu1} is not well-articulated, so we use the ideas developed in \cite[Chapters XI, XII, and XIII]{An} and assemble them to provide an easier and more readable proof. When we get to the second spectral sequence, there is no proof in the literature. One might be tripped up to think that a straightforward dual argument works, but it is not clear what filtration should be used to form the second spectral sequence. We thus include a complete treatment of the second spectral sequence. After establishing the spectral sequences, we deal with their corresponding five-term exact sequences. However, one needs a generalized version of five-term exact sequences which is only stated in \cite[Corollaries 10.32 and 10.34]{Ro} whose statement is not even correct. We include the correct version and use it effectively. In the determination of the terms of the five-term exact sequences, there is a subtlety that should be noted about the fact that the symmetric power functor does not commute with the normalization functor, so one needs some sort of homotopy preservation result which we tackle. All in all, we present a complete, neat, and readable account of Quillen's fundamental spectral sequences.

\section{Spectral Sequences Arising from Filtrations}

It is folklore that filtrations yield spectral sequences; see \cite[Theorem 2.6]{Mc}, \cite[Theorem 10.14]{Ro}, and \cite[Theorem 5.5.1]{We}. In this regard, Quillen's fundamental spectral sequences arise from certain filtrations on chain complexes. However, one needs stronger versions of these theorems in order to establish the convergence of Quillen's fundamental spectral sequences. In this section, we prove a theorem on the convergence of spectral sequences arising from filtrations which generalizes the existing theorems in the literature and fits the bill for Quillen's fundamental spectral sequences. First we recall the basics of spectral sequences and chain complexes in order to set up the language and notation.

Let $R$ be a ring. A \textit{spectral sequence} $E$ of left $R$-modules consists of the following data: A left $R$-module $E_{p,q}^{r}$ for every $p,q\in \mathbb{Z}$ and $r\geq 1$; An $R$-homomorphism $d_{p,q}^{r}:E_{p,q}^{r} \rightarrow E_{p-r,q+r-1}^{r}$ for every $p,q\in \mathbb{Z}$ and $r\geq 1$ such that each composition $E_{p+r,q-r+1}^{r} \xrightarrow{d_{p+r,q-r+1}^{r}} E_{p,q}^{r} \xrightarrow{d_{p,q}^{r}} E_{p-r,q+r-1}^{r}$ is zero; and an $R$-isomorphism $E_{p,q}^{r+1} \cong  \frac{\Ker\left(d_{p,q}^{r}\right)}{\im\left(d_{p+r,q-r+1}^{r}\right)}$ for every $p,q\in \mathbb{Z}$ and $r\geq 1$.

A spectral sequence $E$ of left $R$-modules is said to be \textit{first quadrant} if $E_{p,q}^{2}=0$ for every $p<0$ or $q<0$, and is said to be \textit{third quadrant} if $E_{p,q}^{2}=0$ for every $p>0$ or $q>0$.

Given a spectral sequence $E$ of left $R$-modules, for any $p,q\in \mathbb{Z}$ and $r\geq 2$, there is a chain
$$B_{p,q}^{2}\subseteq B_{p,q}^{3}\subseteq \cdots \subseteq B_{p,q}^{r}\subseteq Z_{p,q}^{r}\subseteq \cdots \subseteq Z_{p,q}^{3}\subseteq Z_{p,q}^{2}$$
of submodules of $E_{p,q}^{1}$ such that $E_{p,q}^{r}\cong \frac{Z_{p,q}^{r}}{B_{p,q}^{r}}$; see \cite[5.2.8]{We} or \cite[paragraph after Example 10.12]{Ro}. Set $Z_{p,q}^{\infty}= \bigcap_{r=2}^{\infty}Z_{p,q}^{r}$, $B_{p,q}^{\infty}= \bigcup_{r=2}^{\infty}B_{p,q}^{r}$, and $E_{p,q}^{\infty}=\frac{Z_{p,q}^{\infty}}{B_{p,q}^{\infty}}$ for every $p,q\in \mathbb{Z}$. The spectral sequence $E$ is said to be \textit{convergent} to a family $H=\{H_{n}\}_{n\in \mathbb{Z}}$ of left $R$-modules if for any $n\in \mathbb{Z}$, $H_{n}$ has a finite filtration
$$0= U^{\alpha}\subseteq U^{\alpha+1}\subseteq \cdots \subseteq U^{\beta}= H_{n}$$
such that $\frac{U^{p}}{U^{p-1}}\cong E_{p,n-p}^{\infty}$ for every $p\in \mathbb{Z}$. When we speak of spectral sequences, we implicitly mean convergent spectral sequences and write $E_{p,q}^{1} \Rightarrow H_{p+q}$ or $E_{p,q}^{2} \Rightarrow H_{p+q}$ accordingly.

An $R$-\textit{complex} $X$ is a $\mathbb{Z}$-indexed sequence
$$X: \cdots \rightarrow X_{i+1} \xrightarrow{\partial_{i+1}^{X}} X_{i} \xrightarrow{\partial_{i}^{X}} X_{i-1} \rightarrow \cdots$$
of left $R$-modules and $R$-homomorphisms such that $\partial_{i}^{X}\partial_{i+1}^{X}=0$ for every $i\in \mathbb{Z}$. For any $i\in \mathbb{Z}$, then $i$th \textit{homology} of $X$ is the left $R$-module $H_{i}(X)=\frac{\Ker\left(\partial_{i}^{X}\right)}{\im\left(\partial_{i+1}^{X}\right)}$.

Given an $R$-complex $X$, a \textit{filtration} of $X$ is a sequence $\{F^{p}\}_{p\in \mathbb{Z}}$ of subcomplexes of $X$ such that $F^{p-1}$ is a subcomplex of $F^p$ for every $p\in \mathbb{Z}$. Given a filtration
$$\cdots \subseteq F^{p-1} \subseteq F^{p} \subseteq \cdots \subseteq X$$
of $X$, there is an induced filtration
$$\cdots \subseteq F_{n}^{p-1} \subseteq F_{n}^{p} \subseteq \cdots \subseteq X_{n}$$
of $X_{n}$ for every $n\in \mathbb{Z}$. Moreover, if for any $p,n\in \mathbb{Z}$, we set $U_{n}^{p}= \im\left(H_{n}(\iota^{p})\right)$ where $\iota^{p}:F^{p}\rightarrow X$ is the inclusion morphism, then we get an induced filtration
$$\cdots \subseteq U_{n}^{p-1} \subseteq U_{n}^{p} \subseteq \cdots \subseteq H_{n}(X)$$
of $H_{n}(X)$ for every $n\in \mathbb{Z}$; see \cite[paragraph after Lemma 10.13]{Ro}.

We need a general theorem on the convergence of the spectral sequence arising from a filtration that is strong enough to derive Quillen's fundamental spectral sequences. We first need a lemma.

\begin{lemma} \label{02.1}
Let $R$ be a ring, $M$ a left $R$-module, and $N$, $K$, and $L$ three submodules of $M$. If $L\subseteq K$, then we have:
$$\textstyle \frac{N+K}{N+L} \cong \frac{K}{(K\cap N)+L}$$
\end{lemma}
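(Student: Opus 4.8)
The plan is to prove this via the third isomorphism theorem, applied carefully after identifying the right normal subgroup situation. Since all objects are submodules of $M$, everything is automatically well-behaved (no normality issues), so the only real content is bookkeeping of sums and intersections.

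First I would consider the natural surjection $\pi: K \to \frac{N+K}{N+L}$ defined as the composite $K \hookrightarrow N+K \twoheadrightarrow \frac{N+K}{N+L}$; explicitly $\pi(x) = x + (N+L)$ for $x \in K$. This map is surjective: any coset in $\frac{N+K}{N+L}$ is represented by some $n + x$ with $n \in N$, $x \in K$, and since $n \in N \subseteq N+L$ we have $n + x + (N+L) = x + (N+L) = \pi(x)$. So it suffices to compute $\Ker(\pi)$. An element $x \in K$ lies in $\Ker(\pi)$ if and only if $x \in N + L$, i.e. $x = n + \ell$ for some $n \in N$, $\ell \in L$. Since $L \subseteq K$, this forces $n = x - \ell \in K$, so $n \in K \cap N$ and $x \in (K\cap N) + L$. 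Conversely $(K \cap N) + L \subseteq N + L$ clearly, and $(K\cap N)+L \subseteq K$ since $L \subseteq K$, so $(K\cap N)+L \subseteq K \cap (N+L) = \Ker(\pi)$. Hence $\Ker(\pi) = (K\cap N)+L$, and the first isomorphism theorem yields $\frac{K}{(K\cap N)+L} \cong \frac{N+K}{N+L}$.

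The step requiring the most care is verifying the kernel computation, specifically the inclusion $\Ker(\pi) \subseteq (K \cap N) + L$, since this is the only place the hypothesis $L \subseteq K$ is used in an essential way — without it, an element of $K$ of the form $n + \ell$ need not decompose with $n \in K$. I would state this as a short explicit modular-law-style argument. Everything else (surjectivity, the reverse kernel inclusion, invoking the first isomorphism theorem) is routine and I would present it tersely. No genuine obstacle is anticipated; the lemma is a packaging result, and the proof is essentially one application of a Noether isomorphism theorem once the map $\pi$ and its kernel are pinned down.
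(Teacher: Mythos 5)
Your proof is correct and is essentially the paper's argument unpacked: the paper cites the second isomorphism theorem to identify $\frac{N+K}{N+L}$ with $\frac{K}{K\cap(N+L)}$ and then the modular law to rewrite $K\cap(N+L)$ as $(K\cap N)+L$, while you construct the underlying surjection $\pi\colon K\to\frac{N+K}{N+L}$ and compute its kernel by hand, which amounts to the same two facts. Both routes use the hypothesis $L\subseteq K$ at exactly the same point (the modular-law step).
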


\begin{proof}
Since $L\subseteq K$, we have $K=K+L$, so we get $\textstyle \frac{N+K}{N+L} = \frac{N+K+L}{N+L} \cong \frac{K}{K\cap (N+L)}$. On the other hand, since $K\supseteq L$, we have $K\cap (N+L)=(K\cap N)+(K\cap L)=(K\cap N)+L$. As a result, we get $\textstyle \frac{N+K}{N+L} \cong \frac{K}{K\cap (N+L)} = \frac{K}{(K\cap N) +L}$.
\end{proof}

The following theorem and its corollaries generalize \cite[Theorem 2.6]{Mc}, \cite[Theorem 10.14]{Ro}, and \cite[Theorem 5.5.1]{We}. The author uses some ideas from \cite[Sections 12.23 and 12.24 on Spectral Sequences from Filtered Complexes]{St} in the sequel.

\begin{theorem} \label{02.2}
Let $R$ be a ring, and $X$ an $R$-complex equipped with a filtration
$$\cdots \subseteq F^{p-1} \subseteq F^{p} \subseteq \cdots \subseteq X$$
such that the induced filtration of $H_{n}(X)$ is finite for every $n\in \mathbb{Z}$. For any $r\geq 0$ and $p,q\in \mathbb{Z}$, set $I_{p,q}^{r}:= \left(\partial_{p+q}^{X}\right)^{-1}\left(F_{p+q-1}^{p-r}\right)\cap F_{p+q}^{p}$ and $J_{p,q}^{r}:= \partial_{p+q+1}^{X}\left(F_{p+q+1}^{p+r-1}\right)\cap F_{p+q}^{p}$. If
$$\left(\textstyle \bigcap_{r=0}^{\infty} I_{p,q}^{r}\right)+F_{p+q}^{p-1}= \left(\Ker\left(\partial_{p+q}^{X}\right)\cap F_{p+q}^{p}\right)+F_{p+q}^{p-1}$$
and
$$\left(\textstyle \bigcup_{r=0}^{\infty} J_{p,q}^{r}\right)+F_{p+q}^{p-1}= \left(\im\left(\partial_{p+q+1}^{X}\right)\cap F_{p+q}^{p}\right)+F_{p+q}^{p-1}$$
for every $p,q\in \mathbb{Z}$, then there exists a spectral sequence as follows:
$$\textstyle E_{p,q}^{1}=H_{p+q}\left(\frac{F^{p}}{F^{p-1}}\right) \Rightarrow H_{p+q}(X)$$
\end{theorem}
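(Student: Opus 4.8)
The plan is to produce the spectral sequence by the standard construction attached to a filtered complex, then to identify its $E^{\infty}$-page with the associated graded of the induced homology filtration $\{U_{p+q}^{p}\}_{p}$ — the two displayed hypotheses being precisely what forces this identification, and the finiteness hypothesis precisely what makes the identification express an honest convergence.

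First I would recall the spectral sequence of the filtered complex $X$, following \cite[Sections 12.23 and 12.24]{St}. For $r\geq 1$ and $p,q\in\mathbb{Z}$ set $E_{p,q}^{r}:=Z_{p,q}^{r}/B_{p,q}^{r}$, with $d_{p,q}^{r}$ induced by the differential of $X$, where $Z_{p,q}^{r}$ and $B_{p,q}^{r}$ are the submodules
$$\textstyle Z_{p,q}^{r}:=\frac{I_{p,q}^{r}+F_{p+q}^{p-1}}{J_{p,q}^{1}+F_{p+q}^{p-1}},\qquad B_{p,q}^{r}:=\frac{J_{p,q}^{r}+F_{p+q}^{p-1}}{J_{p,q}^{1}+F_{p+q}^{p-1}}$$
of $E_{p,q}^{1}=I_{p,q}^{1}/\!\left(J_{p,q}^{1}+F_{p+q}^{p-1}\right)$. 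Using the evident inclusions $I_{p,q}^{r+1}\subseteq I_{p,q}^{r}$ and $J_{p,q}^{r}\subseteq J_{p,q}^{r+1}\subseteq I_{p,q}^{r+1}$, one checks the spectral-sequence axioms; these are rewritings of subquotients, the key tool being Lemma~\ref{02.1} with $M=X_{p+q}$ and $N=F_{p+q}^{p-1}$, which yields $E_{p,q}^{r}\cong\frac{I_{p,q}^{r}}{\left(I_{p,q}^{r}\cap F_{p+q}^{p-1}\right)+J_{p,q}^{r}}$ and, in particular, $E_{p,q}^{1}\cong H_{p+q}\!\left(F^{p}/F^{p-1}\right)$ (for $r=1$ one has $I_{p,q}^{1}\cap F_{p+q}^{p-1}=F_{p+q}^{p-1}$ and $J_{p,q}^{1}=\partial_{p+q+1}^{X}\!\left(F_{p+q+1}^{p}\right)$, so this quotient is manifestly the degree-$(p+q)$ homology of $F^{p}/F^{p-1}$).

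Next I would pass to the limit. Since the $I_{p,q}^{r}$ decrease and the $J_{p,q}^{r}$ increase with $r$, the numerators of the $Z_{p,q}^{r}$ shrink to $\bigcap_{r}\left(I_{p,q}^{r}+F_{p+q}^{p-1}\right)$ and the numerators of the $B_{p,q}^{r}$ grow to $\left(\bigcup_{r}J_{p,q}^{r}\right)+F_{p+q}^{p-1}$, whence
$$\textstyle E_{p,q}^{\infty}=\frac{Z_{p,q}^{\infty}}{B_{p,q}^{\infty}}\cong\frac{\bigcap_{r}\left(I_{p,q}^{r}+F_{p+q}^{p-1}\right)}{\left(\bigcup_{r}J_{p,q}^{r}\right)+F_{p+q}^{p-1}}.$$
Now the hypotheses enter. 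The hypothesis on the $J_{p,q}^{r}$ rewrites the denominator as $\left(\im(\partial_{p+q+1}^{X})\cap F_{p+q}^{p}\right)+F_{p+q}^{p-1}$. For the numerator one shows $\bigcap_{r}\left(I_{p,q}^{r}+F_{p+q}^{p-1}\right)=\left(\Ker(\partial_{p+q}^{X})\cap F_{p+q}^{p}\right)+F_{p+q}^{p-1}$: the inclusion $\supseteq$ is immediate since a cycle of $F^{p}$ in degree $p+q$ lies in every $I_{p,q}^{r}$, while for $\subseteq$ one takes $x$ in the left-hand side, notes that the class of $x$ lies in $Z_{p,q}^{\infty}$ and is hence killed by every $d_{p,q}^{r}$, and uses this to replace — coherently in $r$ — a representative $y_{r}\in I_{p,q}^{r}$ of that class by one $y_{r+1}\in I_{p,q}^{r+1}$ with $y_{r}-y_{r+1}\in F_{p+q}^{p-1}$; then $\partial_{p+q}^{X}(y_{r})$ is a cycle lying in $F_{p+q-1}^{p-r}$ and $x-y_{r}\in F_{p+q}^{p-1}$, so the finiteness of the induced filtration of $H_{p+q-1}(X)$ (which vanishes far down) forces $\partial_{p+q}^{X}(y_{r})\in\im(\partial_{p+q}^{X})$ for $r\gg 0$, and feeding this back through the hypothesis on the $I_{p,q}^{r}$ collapses the intersection. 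Finally, Lemma~\ref{02.1} applied to each of the two quotients in turn identifies
$$\textstyle E_{p,q}^{\infty}\cong\frac{\Ker(\partial_{p+q}^{X})\cap F_{p+q}^{p}}{\left(\im(\partial_{p+q+1}^{X})\cap F_{p+q}^{p}\right)+\left(\Ker(\partial_{p+q}^{X})\cap F_{p+q}^{p-1}\right)}\cong\frac{U_{p+q}^{p}}{U_{p+q}^{p-1}},$$
where $U_{p+q}^{p}=\im\!\left(H_{p+q}(\iota^{p})\right)$ as in the paragraph preceding the theorem.

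To conclude, the hypothesis that the induced filtration $\{U_{n}^{p}\}_{p}$ of $H_{n}(X)$ is finite supplies, for each $n$, integers $\alpha\leq\beta$ with $U_{n}^{p}=0$ for $p\leq\alpha$ and $U_{n}^{p}=H_{n}(X)$ for $p\geq\beta$; combined with the isomorphisms $U_{n}^{p}/U_{n}^{p-1}\cong E_{p,n-p}^{\infty}$ just obtained, this is precisely the assertion $E_{p,q}^{1}\Rightarrow H_{p+q}(X)$. The main obstacle is the middle step, the identification of $E_{p,q}^{\infty}$ with $U_{p+q}^{p}/U_{p+q}^{p-1}$: the infinite intersection $\bigcap_{r}\left(I_{p,q}^{r}+F_{p+q}^{p-1}\right)$ does not a priori commute with the sum, and it is exactly the two displayed hypotheses — which encode the stabilization, modulo $F^{p-1}$, of the approximate cycles $I_{p,q}^{r}$ and approximate boundaries $J_{p,q}^{r}$ — together with the finiteness of the homology filtration, that force it to.
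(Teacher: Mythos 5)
Your proposal is structurally the same as the paper's proof: define the pages $E^{r}$ via the subquotients $Z^{r}/B^{r}$ cut out by the $I^{r}_{p,q}$ and $J^{r}_{p,q}$, verify the spectral-sequence axioms via Lemma~\ref{02.1}, identify $E^{1}\cong H_{p+q}(F^{p}/F^{p-1})$, pass to the limit to compute $E^{\infty}$, and identify $E^{\infty}$ with the associated graded of the induced filtration $\{U^{p}_{n}\}$. The paper's different-looking choice $E^{r}_{p,q}:=I^{r}_{p,q}/(J^{r}_{p,q}+I^{r-1}_{p-1,q+1})$ coincides with your $Z^{r}_{p,q}/B^{r}_{p,q}$ by exactly the application of Lemma~\ref{02.1} you invoke, so that difference is cosmetic.

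Where your proposal runs into trouble is precisely the step you yourself flag as the main obstacle. Passing to the limit gives $Z^{\infty}_{p,q}$ as the image in $E^{1}_{p,q}$ of $\bigcap_{r}\bigl(I^{r}_{p,q}+F^{p-1}_{p+q}\bigr)$, whereas the displayed hypothesis controls $\bigl(\bigcap_{r}I^{r}_{p,q}\bigr)+F^{p-1}_{p+q}$, and you rightly observe these are not automatically equal. But the argument you sketch to bridge them does not close: it ends with ``forces $\partial_{p+q}^{X}(y_{r})\in\im(\partial_{p+q}^{X})$'', which is vacuously true of every $y_{r}\in X_{p+q}$ --- the element $\partial_{p+q}^{X}(y_{r})$ lies in $\im(\partial_{p+q}^{X})$ by definition --- and so carries no information. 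What the argument would actually need is that $\partial_{p+q}^{X}(y_{r})$ is a boundary \emph{inside} $F^{p-r}$, i.e.\ that $H_{p+q-1}(F^{p-r})=0$ for $r\gg 0$; but that is the hypothesis of Corollary~\ref{02.3}, not of Theorem~\ref{02.2}, whose finiteness assumption on the induced filtration of $H_{p+q-1}(X)$ gives only $U^{p-r}_{p+q-1}=0$, which is useless here since the class of $\partial_{p+q}^{X}(y_{r})$ in $H_{p+q-1}(X)$ is already zero. Note also that the paper does not invoke the finiteness hypothesis at this point at all; it passes from $\bigcap_{r}\bigl(I^{r}_{p,q}+F^{p-1}_{p+q}\bigr)$ to $\bigl(\bigcap_{r}I^{r}_{p,q}\bigr)+F^{p-1}_{p+q}$ as one link of a displayed chain of equalities and then applies the hypothesis, reserving the finiteness assumption solely for producing the finite filtration $0=U^{\alpha}_{n}\subseteq\cdots\subseteq U^{\beta}_{n}=H_{n}(X)$ at the end. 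So your attempt to justify the interchange via homology finiteness is an addition not present in the source, and as written it does not succeed.
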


\begin{proof}
First, we construct the spectral sequence. Let $r\geq 0$ and $p,q\in \mathbb{Z}$. Since $F_{p+q-1}^{p-r}\subseteq F_{p+q-1}^{p-r+1}$, it is clear that $I_{p,q}^{r} \subseteq I_{p,q}^{r-1}$. Similarly, as $F_{p+q+1}^{p+r-1}\subseteq F_{p+q+1}^{p+r}$, we see that $J_{p,q}^{r} \subseteq J_{p,q}^{r+1}$. As a consequence, we have:
$$J_{p,q}^{0}\subseteq J_{p,q}^{1}\subseteq \cdots \subseteq \im\left(\partial_{p+q+1}^{X}\right)\cap F_{p+q}^{p} \subseteq \Ker\left(\partial_{p+q}^{X}\right)\cap F_{p+q}^{p} \subseteq \cdots \subseteq I_{p,q}^{1} \subseteq I_{p,q}^{0}= F_{p+q}^{p}$$
Let $r\geq 1$ and $p,q\in \mathbb{Z}$. We show that $J_{p,q}^{r}=\partial_{p+q+1}^{F^{p+r-1}}\left(I_{p+r-1,q-r+2}^{r-1}\right)$. To see this, let $y\in J_{p,q}^{r}= \partial_{p+q+1}^{X}\left(F_{p+q+1}^{p+r-1}\right)\cap F_{p+q}^{p}$. Then there is an element $x\in F_{p+q+1}^{p+r-1}$ such that $y=\partial_{p+q+1}^{X}(x)$. But then $\partial_{p+q+1}^{X}(x)=y\in F_{p+q}^{p}$, so $x\in \left(\partial_{p+q+1}^{X}\right)^{-1}\left(F_{p+q}^{p}\right)$. Hence $x\in \left(\partial_{p+q+1}^{X}\right)^{-1}\left(F_{p+q}^{p}\right)\cap F_{p+q+1}^{p+r-1}=I_{p+r-1,q-r+2}^{r-1}$. As a result, $y=\partial_{p+q+1}^{X}(x)= \partial_{p+q+1}^{F^{p+r-1}}(x)\in \partial_{p+q+1}^{F^{p+r-1}}\left(I_{p+r-1,q-r+2}^{r-1}\right)$. Conversely, let $y\in \partial_{p+q+1}^{F^{p+r-1}}\left(I_{p+r-1,q-r+2}^{r-1}\right)$. Then there is an element $x\in I_{p+r-1,q-r+2}^{r-1}$ such that $y=\partial_{p+q+1}^{F^{p+r-1}}(x)$. But $x\in I_{p+r-1,q-r+2}^{r-1}=\left(\partial_{p+q+1}^{X}\right)^{-1}\left(F_{p+q}^{p}\right)\cap F_{p+q+1}^{p+r-1}$, so $x\in F_{p+q+1}^{p+r-1}$ and $\partial_{p+q+1}^{X}(x)\in F_{p+q}^{p}$. As a result, $y=\partial_{p+q+1}^{F^{p+r-1}}(x)=\partial_{p+q+1}^{X}(x)\in \partial_{p+q+1}^{X}\left(F_{p+q+1}^{p+r-1}\right)\cap F_{p+q}^{p}=J_{p,q}^{r}$. Therefore, $J_{p,q}^{r}= \partial_{p+q+1}^{F^{p+r-1}}\left(I_{p+r-1,q-r+2}^{r-1}\right)$.

Let $r\geq 1$ and $p,q\in \mathbb{Z}$. Define:
$$\textstyle E_{p,q}^{r}:= \frac{I_{p,q}^{r}}{J_{p,q}^{r}+I_{p-1,q+1}^{r-1}}$$
Furthermore, define a map $\phi_{p,q}^{r}:I_{p,q}^{r}\rightarrow \frac{I_{p-r,q+r-1}^{r}}{J_{p-r,q+r-1}^{r}+I_{p-r-1,q+r}^{r-1}}$ by setting $\phi_{p,q}^{r}(x)=\partial_{p+q}^{X}(x)+J_{p-r,q+r-1}^{r}+I_{p-r-1,q+r}^{r-1}$ for every $x\in I_{p,q}^{r}$. For well-definedness, we note that if $x\in I_{p,q}^{r}=\left(\partial_{p+q}^{X}\right)^{-1}\left(F_{p+q-1}^{p-r}\right)\cap F_{p+q}^{p}$, then $\partial_{p+q}^{X}(x)\in F_{p+q-1}^{p-r}$, so since $\partial_{p+q-1}^{X}\left(\partial_{p+q}^{X}(x)\right)=0\in F_{p+q-2}^{p-2r}$, we get $\partial_{p+q}^{X}(x)\in \left(\partial_{p+q-1}^{X}\right)^{-1}\left(F_{p+q-2}^{p-2r}\right)\cap F_{p+q-1}^{p-r}= I_{p-r,q+r-1}^{r}$. Hence $\phi_{p,q}^{r}$ is well-defined. Moreover, it is clear that $\phi_{p,q}^{r}$ is an $R$-homomorphism. If $x\in J_{p,q}^{r}+I_{p-1,q+1}^{r-1}$, then we can write $x=u+v$ for some $u\in J_{p,q}^{r}$ and $v\in I_{p-1,q+1}^{r-1}$. Since $u\in J_{p,q}^{r}=\partial_{p+q+1}^{X}\left(F_{p+q+1}^{p+r-1}\right)\cap F_{p+q}^{p}$, we have $u=\partial_{p+q+1}^{X}(w)$ for some $w\in F_{p+q+1}^{p+r-1}$, so $\partial_{p+q}^{X}(u)=\partial_{p+q}^{X}\left(\partial_{p+q+1}^{X}(w)\right)=0$. As $v\in I_{p-1,q+1}^{r-1}= \left(\partial_{p+q}^{X}\right)^{-1}\left(F_{p+q-1}^{p-r}\right)\cap F_{p+q}^{p-1}$, we have $\partial_{p+q}^{X}(v)=\partial_{p+q}^{F^{p-1}}(v)\in \partial_{p+q}^{F^{p-1}}\left(I_{p-1,q+1}^{r-1}\right)=J_{p-r,q+r-1}^{r}$. Consequently, $\partial_{p+q}^{X}(x)=\partial_{p+q}^{X}(u)+\partial_{p+q}^{X}(v)=\partial_{p+q}^{X}(v)\in J_{p-r,q+r-1}^{r}$, so $\phi_{p,q}^{r}(x)=0$. This shows that $J_{p,q}^{r}+I_{p-1,q+1}^{r-1}\subseteq \Ker\left(\phi_{p,q}^{r}\right)$, so $\phi_{p,q}^{r}$ induces an $R$-homomorphism
$$\textstyle d_{p,q}^{r}:= \overline{\phi_{p,q}^{r}}: \frac{I_{p,q}^{r}}{J_{p,q}^{r}+I_{p-1,q+1}^{r-1}}\rightarrow \frac{I_{p-r,q+r-1}^{r}}{J_{p-r,q+r-1}^{r}+I_{p-r-1,q+r}^{r-1}}$$
given by
$$d_{p,q}^{r}\left(x+J_{p,q}^{r}+I_{p-1,q+1}^{r-1}\right)=\partial_{p+q}^{X}(x)+J_{p-r,q+r-1}^{r}+I_{p-r-1,q+r}^{r-1}$$
for every $x\in I_{p,q}^{r}$. Since $\partial_{p+q}^{X}\partial_{p+q+1}^{X}=0$, we see that $d_{p,q}^{r}d_{p+r,q-r+1}^{r}=0$.

Let $r\geq 1$ and $p,q\in \mathbb{Z}$. Considering $d_{p,q}^{r}:\frac{I_{p,q}^{r}}{J_{p,q}^{r}+I_{p-1,q+1}^{r-1}}\rightarrow \frac{I_{p-r,q+r-1}^{r}}{J_{p-r,q+r-1}^{r}+I_{p-r-1,q+r}^{r-1}}$, we show that:
$$\textstyle \Ker\left(d_{p,q}^{r}\right)=\frac{I_{p,q}^{r+1}+I_{p-1,q+1}^{r-1}}{J_{p,q}^{r}+I_{p-1,q+1}^{r-1}}$$
To see this, let $x+J_{p,q}^{r}+I_{p-1,q+1}^{r-1} \in \Ker\left(d_{p,q}^{r}\right)$. Then $x\in I_{p,q}^{r}$ with $\partial_{p+q}^{X}(x)\in J_{p-r,q+r-1}^{r}+I_{p-r-1,q+r}^{r-1}$, so we can write $\partial_{p+q}^{X}(x)=u+v$ for some $u\in J_{p-r,q+r-1}^{r}$ and $v\in I_{p-r-1,q+r}^{r-1}$. As $u\in J_{p-r,q+r-1}^{r}=\partial_{p+q}^{F^{p-1}}\left(I_{p-1,q+1}^{r-1}\right)$, we have $u=\partial_{p+q}^{F^{p-1}}(y)=\partial_{p+q}^{X}(y)$ for some $y\in I_{p-1,q+1}^{r-1}$. Now $x\in I_{p,q}^{r}\subseteq F_{p+q}^{p}$ and $y\in I_{p-1,q+1}^{r-1}\subseteq F_{p+q}^{p-1}\subseteq F_{p+q}^{p}$, so $x-y\in F_{p+q}^{p}$. Also, $\partial_{p+q}^{X}(x-y)=\partial_{p+q}^{X}(x)-u=v \in I_{p-r-1,q+r}^{r-1}\subseteq F_{p+q-1}^{p-r-1}$, so $x-y\in \left(\partial_{p+q}^{X}\right)^{-1}\left(F_{p+q-1}^{p-r-1}\right)$. Therefore, $x-y\in \left(\partial_{p+q}^{X}\right)^{-1}\left(F_{p+q-1}^{p-r-1}\right)\cap F_{p+q}^{p}= I_{p,q}^{r+1}$, so $x-y=z$ for some $z\in I_{p,q}^{r+1}$. Thus $x=z+y\in I_{p,q}^{r+1}+I_{p-1,q+1}^{r-1}$, so $x+J_{p,q}^{r}+I_{p-1,q+1}^{r-1}\in \frac{I_{p,q}^{r+1}+I_{p-1,q+1}^{r-1}}{J_{p,q}^{r}+I_{p-1,q+1}^{r-1}}$. Conversely, let $x\in I_{p,q}^{r+1}=\left(\partial_{p+q}^{X}\right)^{-1}\left(F_{p+q-1}^{p-r-1}\right)\cap F_{p+q}^{p}$. Then $\partial_{p+q}^{X}(x)\in F_{p+q-1}^{p-r-1}$, so since $\partial_{p+q-1}^{X}\left(\partial_{p+q}^{X}(x)\right)=0\in F_{p+q-2}^{p-2r}$, we get $\partial_{p+q}^{X}(x)\in \left(\partial_{p+q-1}^{X}\right)^{-1}\left(F_{p+q-2}^{p-2r}\right)\cap F_{p+q-1}^{p-r-1}=I_{p-r-1,q+r}^{r-1}$. As a result, $d_{p,q}^{r}\left(x+J_{p,q}^{r}+I_{p-1,q+1}^{r-1}\right)= \partial_{p+q}^{X}(x)+J_{p-r,q+r-1}^{r}+I_{p-r-1,q+r}^{r-1}=0$, so $x+J_{p,q}^{r}+I_{p-1,q+1}^{r-1}\in \Ker\left(d_{p,q}^{r}\right)$. Therefore, the claim is proved.

Similarly, considering $d_{p+r,q-r+1}^{r}:\frac{I_{p+r,q-r+1}^{r}}{J_{p+r,q-r+1}^{r}+I_{p+r-1,q-r+2}^{r-1}}\rightarrow \frac{I_{p,q}^{r}}{J_{p,q}^{r}+I_{p-1,q+1}^{r-1}}$,  we show that:
$$\textstyle \im\left(d_{p+r,q-r+1}^{r}\right)=\frac{J_{p,q}^{r+1}+I_{p-1,q+1}^{r-1}}{J_{p,q}^{r}+I_{p-1,q+1}^{r-1}}$$
To see this, let $y\in I_{p,q}^{r}$ be such that $y+J_{p,q}^{r}+I_{p-1,q+1}^{r-1}\in \im\left(d_{p+r,q-r+1}^{r}\right)$. Then there is an element $x\in I_{p+r,q-r+1}^{r}\subseteq F_{p+q+1}^{p+r}$ such that:
$$y+J_{p,q}^{r}+I_{p-1,q+1}^{r-1}= d_{p+r,q-r+1}^{r}\left(x+J_{p+r,q-r+1}^{r}+I_{p+r-1,q-r+2}^{r-1}\right)= \partial_{p+q+1}^{X}(x)+J_{p,q}^{r}+I_{p-1,q+1}^{r-1}$$
That is, $y-\partial_{p+q+1}^{X}(x)\in J_{p,q}^{r}+I_{p-1,q+1}^{r-1}$, so we can write $y-\partial_{p+q+1}^{X}(x)=u+v$ for some $u\in J_{p,q}^{r}\subseteq J_{p,q}^{r+1}$ and $v\in I_{p-1,q+1}^{r-1}$. But $x\in I_{p+r,q-r+1}^{r}$, so $\partial_{p+q+1}^{X}(x)=\partial_{p+q+1}^{F^{p+r}}(x)\in \partial_{p+q+1}^{F^{p+r}}\left(I_{p+r,q-r+1}^{r}\right)=J_{p,q}^{r+1}$. As a result, $\partial_{p+q+1}^{X}(x)+u\in J_{p,q}^{r+1}$, so $y=\partial_{p+q+1}^{X}(x)+u+v\in J_{p,q}^{r+1}+I_{p-1,q+1}^{r-1}$. Conversely, let $y\in J_{p,q}^{r+1}=\partial_{p+q+1}^{F^{p+r}}\left(I_{p+r,q-r+1}^{r}\right)$. Then $y=\partial_{p+q+1}^{F^{p+r}}(x)=\partial_{p+q+1}^{X}(x)$ for some $x\in I_{p+r,q-r+1}^{r}$. Thus we have:
$$y+J_{p,q}^{r}+I_{p-1,q+1}^{r-1}=\partial_{p+q+1}^{X}(x)+J_{p,q}^{r}+I_{p-1,q+1}^{r-1}=d_{p+r,q-r+1}^{r}\left(x+J_{p+r,q-r+1}^{r}+I_{p+r-1,q-r+2}^{r-1}\right)$$
Therefore, the claim is proved.

We now note that since $F_{p+q}^{p-1}\subseteq F_{p+q}^{p}$, we have:
$$I_{p,q}^{r}\cap F_{p+q}^{p-1}= \left(\partial_{p+q}^{X}\right)^{-1}\left(F_{p+q-1}^{p-r}\right)\cap F_{p+q}^{p}\cap F_{p+q}^{p-1}= \left(\partial_{p+q}^{X}\right)^{-1}\left(F_{p+q-1}^{p-r}\right)\cap F_{p+q}^{p-1}=I_{p-1,q+1}^{r-1}$$
Using this and the fact that $I_{p,q}^{r+1}\subseteq I_{p,q}^{r}$, we see that:
$$I_{p,q}^{r+1}\cap I_{p-1,q+1}^{r-1}=I_{p,q}^{r+1}\cap I_{p,q}^{r}\cap F_{p+q}^{p-1}= I_{p,q}^{r+1}\cap F_{p+q}^{p-1}= I_{p-1,q+1}^{r}$$
Thus in view of the fact that $J_{p,q}^{r+1}\subseteq I_{p,q}^{r+1}$, we can use Lemma \ref{02.1} to get the following isomorphisms:
$$\textstyle E_{p,q}^{r+1}= \frac{I_{p,q}^{r+1}}{J_{p,q}^{r+1}+I_{p-1,q+1}^{r}}= \frac{I_{p,q}^{r+1}}{J_{p,q}^{r+1}+\left(I_{p,q}^{r+1}\cap I_{p-1,q+1}^{r-1}\right)}\cong \frac{I_{p,q}^{r+1}+I_{p-1,q+1}^{r-1}}{J_{p,q}^{r+1}+I_{p-1,q+1}^{r-1}}\cong \frac{\frac{I_{p,q}^{r+1}+I_{p-1,q+1}^{r-1}}{J_{p,q}^{r}+I_{p-1,q+1}^{r-1}}}{\frac{J_{p,q}^{r+1}+I_{p-1,q+1}^{r-1}}{J_{p,q}^{r}+I_{p-1,q+1}^{r-1}}}= \frac{\Ker\left(d_{p,q}^{r}\right)}{\im\left(d_{p+r,q-r+1}^{r}\right)}$$

Let $p,q\in \mathbb{Z}$. We note that $I_{p,q}^{1}=\left(\partial_{p+q}^{X}\right)^{-1}\left(F_{p+q-1}^{p-1}\right)\cap F_{p+q}^{p}= \left(\partial_{p+q}^{F^p}\right)^{-1}\left(F_{p+q-1}^{p-1}\right)$ and $J_{p,q}^{1}=\partial_{p+q+1}^{F^p}\left(I_{p,q+1}^{0}\right)= \partial_{p+q+1}^{F^p}\left(F_{p+q+1}^{p}\right)$. Consider the following $R$-homomorphisms:
$$\textstyle \frac{F_{p+q+1}^{p}}{F_{p+q+1}^{p-1}} \xrightarrow{\partial_{p+q+1}^{\frac{F^{p}}{F^{p-1}}}} \frac{F_{p+q}^{p}}{F_{p+q}^{p-1}} \xrightarrow{\partial_{p+q}^{\frac{F^{p}}{F^{p-1}}}} \frac{F_{p+q-1}^{p}}{F_{p+q-1}^{p-1}}$$
Then using Lemma \ref{02.1}, we see that:
$$\textstyle H_{p+q}\left(\frac{F^{p}}{F^{p-1}}\right)= \frac{\Ker\left(\partial_{p+q}^{\frac{F^{p}}{F^{p-1}}}\right)}{\im\left(\partial_{p+q+1}^{\frac{F^{p}}{F^{p-1}}}\right)}= \frac{\frac{\left(\partial_{p+q}^{F^p}\right)^{-1}\left(F_{p+q-1}^{p-1}\right)+F_{p+q}^{p-1}}{F_{p+q}^{p-1}}} {\frac{\partial_{p+q+1}^{F^{p}}\left(F_{p+q+1}^{p}\right)+F_{p+q}^{p-1}}{F_{p+q}^{p-1}}}\cong \frac{I_{p,q}^{1}+F_{p+q}^{p-1}}{J_{p,q}^{1}+F_{p+q}^{p-1}}\cong \frac{I_{p,q}^{1}}{J_{p,q}^{1}+\left(I_{p,q}^{1}\cap F_{p+q}^{p-1}\right)}= \frac{I_{p,q}^{1}}{J_{p,q}^{1}+I_{p-1,q+1}^{0}}= E_{p,q}^{1}$$
Therefore, we obtain a spectral sequence with $E_{p,q}^{1}\cong H_{p+q}\left(\frac{F^{p}}{F^{p-1}}\right)$ for every $p,q\in \mathbb{Z}$.

Next, we establish the convergence of the spectral sequence. Let $p,q\in \mathbb{Z}$. By definition, we have $Z_{p,q}^{2}= \Ker\left(d_{p,q}^{1}\right)= \frac{I_{p,q}^{2}+I_{p-1,q+1}^{0}}{J_{p,q}^{1}+I_{p-1,q+1}^{0}}$ and $B_{p,q}^{2}=\im\left(d_{p+1,q}^{1}\right) =\frac{J_{p,q}^{2}+I_{p-1,q+1}^{0}}{J_{p,q}^1+I_{p-1,q+1}^{0}}$ as submodules of $E_{p,q}^{1}=\frac{I_{p,q}^{1}}{J_{p,q}^1+I_{p-1,q+1}^{0}}$. As we observed above, we have:
$$\textstyle E_{p,q}^{2}= \frac{I_{p,q}^{2}}{J_{p,q}^{2}+I_{p-1,q+1}^{1}}= \frac{I_{p,q}^{2}}{J_{p,q}^{2}+\left(I_{p,q}^{2}\cap I_{p-1,q+1}^{0}\right)}\cong \frac{I_{p,q}^{2}+I_{p-1,q+1}^{0}}{J_{p,q}^{2}+I_{p-1,q+1}^{0}}\cong \frac{\frac{I_{p,q}^{2}+I_{p-1,q+1}^{0}}{J_{p,q}^{1}+I_{p-1,q+1}^{0}}}{\frac{J_{p,q}^{2}+I_{p-1,q+1}^{0}}{J_{p,q}^{1}+I_{p-1,q+1}^{0}}}= \frac{\Ker\left(d_{p,q}^{1}\right)}{\im\left(d_{p+1,q}^{1}\right)}= \frac{Z_{p,q}^{2}}{B_{p,q}^{2}}$$
We note that since $I_{p,q}^{3}\cap F_{p+q}^{p-1}= I_{p-1,q+1}^{2}\subseteq I_{p-1,q+1}^{1}\subseteq I_{p-1,q+1}^{0}= F_{p+q}^{p-1}$, we have:
\begin{equation*}
\begin{split}
\left(I_{p,q}^{3}+I_{p-1,q+1}^{1}\right)\cap I_{p-1,q+1}^{0} & = \left(I_{p,q}^{3}\cap I_{p-1,q+1}^{0}\right)+ \left(I_{p-1,q+1}^{1}\cap I_{p-1,q+1}^{0}\right) = \left(I_{p,q}^{3}\cap F_{p+q}^{p-1}\right)+ I_{p-1,q+1}^{1} \\
 & = I_{p-1,q+1}^{2}+I_{p-1,q+1}^{1} = I_{p-1,q+1}^{1}
\end{split}
\end{equation*}
Now the submodule $\Ker\left(d_{p,q}^{2}\right)$ of $E_{p,q}^{2}$ is mapped under the above isomorphism as follows:
\begin{equation*}
\begin{split}
 \Ker\left(d_{p,q}^{2}\right) & = \textstyle \frac{I_{p,q}^{3}+I_{p-1,q+1}^{1}}{J_{p,q}^{2}+I_{p-1,q+1}^{1}} = \textstyle \frac{I_{p,q}^{3}+I_{p-1,q+1}^{1}}{J_{p,q}^{2}+\left(\left(I_{p,q}^{3}+I_{p-1,q+1}^{1}\right)\cap I_{p-1,q+1}^{0}\right)} \cong \textstyle \frac{I_{p,q}^{3}+I_{p-1,q+1}^{1}+I_{p-1,q+1}^{0}}{J_{p,q}^{2}+I_{p-1,q+1}^{0}} \cong \textstyle \frac{I_{p,q}^{3}+I_{p-1,q+1}^{0}}{J_{p,q}^{2}+I_{p-1,q+1}^{0}} \\
 & \cong \textstyle \frac{\frac{I_{p,q}^{3}+I_{p-1,q+1}^{0}}{J_{p,q}^{1}+I_{p-1,q+1}^{0}}}{\frac{J_{p,q}^{2}+I_{p-1,q+1}^{0}}{J_{p,q}^{1}+I_{p-1,q+1}^{0}}} = \textstyle \frac{Z_{p,q}^{3}}{B_{p,q}^{2}}
\end{split}
\end{equation*}
Hence $Z_{p,q}^{3}=\frac{I_{p,q}^{3}+I_{p-1,q+1}^{0}}{J_{p,q}^{1}+I_{p-1,q+1}^{0}}$. Similarly, since $J_{p,q}^{3}\cap F_{p+q}^{p-1}\subseteq I_{p,q}^{2}\cap F_{p+q}^{p-1}=I_{p-1,q+1}^{1}\subseteq I_{p-1,q+1}^{0}= F_{p+q}^{p-1}$, we have:
\begin{equation*}
\begin{split}
\left(J_{p,q}^{3}+I_{p-1,q+1}^{1}\right)\cap I_{p-1,q+1}^{0} & = \left(J_{p,q}^{3}\cap I_{p-1,q+1}^{0}\right)+ \left(I_{p-1,q+1}^{1}\cap I_{p-1,q+1}^{0}\right) = \left(J_{p,q}^{3}\cap F_{p+q}^{p-1}\right)+ I_{p-1,q+1}^{1} \\
 & = I_{p-1,q+1}^{1}
\end{split}
\end{equation*}
As a result, the submodule $\im\left(d_{p+2,q-1}^{2}\right)$ of $E_{p,q}^{2}$ is mapped under the above isomorphism as follows:
\begin{equation*}
\begin{split}
 \textstyle \im\left(d_{p+2,q-1}^{2}\right) & = \textstyle \frac{J_{p,q}^{3}+I_{p-1,q+1}^{1}}{J_{p,q}^{2}+I_{p-1,q+1}^{1}} = \frac{J_{p,q}^{3}+I_{p-1,q+1}^{1}}{J_{p,q}^{2}+\left(\left(J_{p,q}^{3}+I_{p-1,q+1}^{1}\right)\cap I_{p-1,q+1}^{0}\right)} \cong \frac{J_{p,q}^{3}+I_{p-1,q+1}^{1}+I_{p-1,q+1}^{0}}{J_{p,q}^{2}+I_{p-1,q+1}^{0}} \cong \frac{J_{p,q}^{3}+I_{p-1,q+1}^{0}}{J_{p,q}^{2}+I_{p-1,q+1}^{0}} \\
 & \textstyle \cong \frac{\frac{J_{p,q}^{3}+I_{p-1,q+1}^{0}}{J_{p,q}^{1}+I_{p-1,q+1}^{0}}}{\frac{J_{p,q}^{2}+I_{p-1,q+1}^{0}}{J_{p,q}^{1}+I_{p-1,q+1}^{0}}} = \frac{B_{p,q}^{3}}{B_{p,q}^{2}}
\end{split}
\end{equation*}
Hence $B_{p,q}^{3}=\frac{J_{p,q}^{3}+I_{p-1,q+1}^{0}}{J_{p,q}^{1}+I_{p-1,q+1}^{0}}$. Continuing in this way, we see that:
\[
 Z_{p,q}^{r}= \textstyle \frac{I_{p,q}^{r}+I_{p-1,q+1}^{0}}{J_{p,q}^{1}+I_{p-1,q+1}^{0}}=\frac{I_{p,q}^{r}+F_{p+q}^{p-1}}{J_{p,q}^{1}+F_{p+q}^{p-1}}
\quad \textrm{and} \quad
 B_{p,q}^{r}= \textstyle \frac{J_{p,q}^{r}+I_{p-1,q+1}^{0}}{J_{p,q}^{1}+I_{p-1,q+1}^{0}}= \frac{J_{p,q}^{r}+F_{p+q}^{p-1}}{J_{p,q}^{1}+F_{p+q}^{p-1}}
\]
On the other hand, using the hypothesis, we have:
$$\textstyle Z_{p,q}^{\infty}= \bigcap_{r=2}^{\infty}Z_{p,q}^{r}= \bigcap_{r=2}^{\infty}\frac{I_{p,q}^{r}+F_{p+q}^{p-1}}{J_{p,q}^{1}+F_{p+q}^{p-1}}= \frac{\left(\bigcap_{r=2}^{\infty}I_{p,q}^{r}\right)+F_{p+q}^{p-1}}{J_{p,q}^{1}+F_{p+q}^{p-1}}= \frac{\left(\Ker\left(\partial_{p+q}^{X}\right)\cap F_{p+q}^{p}\right)+F_{p+q}^{p-1}}{J_{p,q}^{1}+F_{p+q}^{p-1}}$$
and
$$\textstyle B_{p,q}^{\infty}= \bigcup_{r=2}^{\infty}B_{p,q}^{r}= \bigcup_{r=2}^{\infty}\frac{J_{p,q}^{r}+F_{p+q}^{p-1}}{J_{p,q}^{1}+F_{p+q}^{p-1}}= \frac{\left(\bigcup_{r=2}^{\infty}J_{p,q}^{r}\right)+F_{p+q}^{p-1}}{J_{p,q}^{1}+F_{p+q}^{p-1}}= \frac{\left(\im\left(\partial_{p+q+1}^{X}\right)\cap F_{p+q}^{p}\right)+F_{p+q}^{p-1}}{J_{p,q}^{1}+F_{p+q}^{p-1}}$$
Therefore, using Lemma \ref{02.1}, we obtain:
$$\textstyle E_{p,q}^{\infty} = \frac{Z_{p,q}^{\infty}}{B_{p,q}^{\infty}} \cong \frac{\left(\Ker\left(\partial_{p+q}^{X}\right)\cap F_{p+q}^{p}\right)+F_{p+q}^{p-1}}{\left(\im\left(\partial_{p+q+1}^{X}\right)\cap F_{p+q}^{p}\right)+F_{p+q}^{p-1}} \cong \frac{\Ker\left(\partial_{p+q}^{X}\right)\cap F_{p+q}^{p}}{\left(\Ker\left(\partial_{p+q}^{X}\right)\cap F_{p+q}^{p-1}\right)+ \left(\im\left(\partial_{p+q+1}^{X}\right)\cap F_{p+q}^{p}\right)}$$
Let $p,n\in \mathbb{Z}$. Let $\iota^{p}:F^{p}\rightarrow X$ be the inclusion morphism, and consider the induced $R$-homomorphism $H_{n}(\iota^{p}):H_{n}(F^{p})\rightarrow H_{n} (X)$. Then it is clear that:
$$\textstyle U_{n}^{p}:= \im\left(H_{n}(\iota^{p})\right)= \frac{\Ker\left(\partial_{n}^{F^{p}}\right)+\im\left(\partial_{n+1}^{X}\right)}{\im\left(\partial_{n+1}^{X}\right)}$$
By the hypothesis, there is a finite filtration $0=U_{n}^{\alpha}\subseteq U_{n}^{\alpha+1}\subseteq \cdots \subseteq U_{n}^{\beta}= H_{n}(X)$ of $H_{n}(X)$ where $\alpha$ and $\beta$ depend on $n$. Moreover, $\Ker\left(\partial_{n}^{F^{p-1}}\right)\subseteq \Ker\left(\partial_{n}^{F^p}\right)$, so using Lemma \ref{02.1}, we have:
\begin{equation*}
\begin{split}
 \textstyle \frac{U_{n}^{p}}{U_{n}^{p-1}} & \cong \textstyle \frac{\Ker\left(\partial_{n}^{F^{p}}\right)+\im\left(\partial_{n+1}^{X}\right)}{\Ker\left(\partial_{n}^{F^{p-1}}\right)+\im\left(\partial_{n+1}^{X}\right)}
 \cong \frac{\Ker\left(\partial_{n}^{F^{p}}\right)}{\Ker\left(\partial_{n}^{F^{p-1}}\right)+\left(\im\left(\partial_{n+1}^{X}\right)\cap \Ker\left(\partial_{n}^{F^{p}}\right)\right)} \textstyle \cong \frac{\Ker\left(\partial_{n}^{X}\right)\cap F_{n}^{p}}{\left(\Ker\left(\partial_{n}^{X}\right)\cap F_{n}^{p-1}\right)+\left(\im\left(\partial_{n+1}^{X}\right)\cap F_{n}^{p}\right)} \cong E_{p,n-p}^{\infty}
\end{split}
\end{equation*}
This means that the spectral sequence is convergent to $H_{n}(X)$.
\end{proof}

\begin{corollary} \label{02.3}
Let $R$ be a ring, $X$ an $R$-complex equipped with a filtration
$$\cdots \subseteq F^{p-1} \subseteq F^{p} \subseteq \cdots \subseteq X,$$
and $\iota^{p}:F^{p}\rightarrow X$ the inclusion morphism for every $p\in \mathbb{Z}$. If for any $n\in \mathbb{Z}$, there are integers $\alpha(n)$ and $\beta(n)$ such that $H_n(F^{p})=0$ for every $p\leq \alpha(n)$, and $H_{n}(\iota^{p}):H_{n}(F^{p})\rightarrow H_{n}(X)$ is an isomorphism for every $p\geq \beta(n)$, then there exists a spectral sequence as follows:
$$\textstyle E_{p,q}^{1}=H_{p+q}\left(\frac{F^{p}}{F^{p-1}}\right) \Rightarrow H_{p+q}(X)$$
\end{corollary}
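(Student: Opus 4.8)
The plan is to deduce this from Theorem \ref{02.2} by checking its three hypotheses; throughout fix $p,q\in\mathbb{Z}$ and set $n:=p+q$. First, the induced filtration $\{U_n^p\}_p$ of $H_n(X)$ is finite: since $U_n^p=\im\left(H_n(\iota^p)\right)$, the hypothesis $H_n(F^p)=0$ for $p\le\alpha(n)$ gives $U_n^p=0$ there, and surjectivity of $H_n(\iota^p)$ for $p\ge\beta(n)$ gives $U_n^p=H_n(X)$ there; as $\{U_n^p\}_p$ is increasing, it stabilizes at both ends, hence is finite.

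Next I would check the cycle condition $\left(\bigcap_{r=0}^{\infty}I_{p,q}^{r}\right)+F_{n}^{p-1}=\left(\Ker(\partial_n^X)\cap F_n^p\right)+F_n^{p-1}$. The inclusion $\supseteq$ is immediate: any $x\in\Ker(\partial_n^X)\cap F_n^p$ has $\partial_n^X(x)=0\in F_{n-1}^{p-r}$ for all $r$, so $x\in I_{p,q}^r$ for every $r\ge 0$. For $\subseteq$ I would use the vanishing hypothesis: given $x\in\bigcap_r I_{p,q}^r$ we have $\partial_n^X(x)\in F_{n-1}^{p-r}$ for all $r$; choose $r\ge 1$ with $p-r\le\alpha(n-1)$, so $H_{n-1}(F^{p-r})=0$; since $\partial_n^X(x)$ is a cycle of $F^{p-r}$ it is a boundary there, say $\partial_n^X(x)=\partial_n^X(y)$ with $y\in F_n^{p-r}\subseteq F_n^{p-1}$; then $x-y\in\Ker(\partial_n^X)\cap F_n^p$ and $x=(x-y)+y$ lies in the right-hand side.

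Finally I would check the boundary condition $\left(\bigcup_{r=0}^{\infty}J_{p,q}^{r}\right)+F_{n}^{p-1}=\left(\im(\partial_{n+1}^X)\cap F_n^p\right)+F_n^{p-1}$. Here $\subseteq$ is immediate since each $J_{p,q}^r\subseteq\im(\partial_{n+1}^X)\cap F_n^p$, and the inclusion $\supseteq$ is the main obstacle. Given $x\in\im(\partial_{n+1}^X)\cap F_n^p$ we have $\partial_n^X(x)=0$, so for $p'\ge\max\{p,\beta(n)\}$ the element $x$ is a cycle of $F^{p'}$ whose class maps to $0$ in $H_n(X)$; since $H_n(\iota^{p'})$ is an isomorphism, hence injective, $x$ is already a boundary in $F^{p'}$, say $x=\partial_{n+1}^X(w)$ with $w\in F_{n+1}^{p'}$; taking $r:=p'-p+1\ge 1$ yields $x\in\partial_{n+1}^X(F_{n+1}^{p+r-1})\cap F_n^p=J_{p,q}^r$. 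So all three hypotheses hold and Theorem \ref{02.2} produces the spectral sequence. The delicate point is exactly this last step — promoting a cycle of $F^p$ that bounds in $X$ to one bounding in some $F^{p'}$ — which is where the stabilization hypothesis (injectivity of $H_n(\iota^{p'})$ for $p'\gg 0$) is indispensable, while the cycle condition needs the complementary vanishing hypothesis and the finiteness of the filtration needs both.
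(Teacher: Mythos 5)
Your proposal is correct and follows essentially the same route as the paper's proof: establish finiteness of the induced filtration from the two stabilization hypotheses, verify the cycle condition by choosing $r$ with $p-r\le\alpha(n-1)$ so that the vanishing $H_{n-1}(F^{p-r})=0$ lets you absorb $\partial_n^X(x)$ into a boundary from $F_n^{p-r}\subseteq F_n^{p-1}$, and verify the boundary condition by choosing $p'\ge\beta(n)$ so that injectivity of $H_n(\iota^{p'})$ promotes a cycle of $F^{p'}$ bounding in $X$ to one bounding in $F^{p'}$, then invoke Theorem \ref{02.2}. The paper's argument differs only in notation (it fixes an $r_0$ with $p+r_0-1\ge\beta(n)$ rather than fixing $p'$ first), so there is no substantive difference.
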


\begin{proof}
Set $U_{n}^{p}=\im\left(H_{n}(\iota^{p})\right)$ for every $p,n\in \mathbb{Z}$. Let $n\in \mathbb{Z}$. The hypothesis implies that $U_{n}^{p}=0$ for every $p\leq \alpha(n)$, and $U_{n}^{p}=H_{n}(X)$ for every $p\geq \beta(n)$. Thus there is a finite filtration of $H_{n}(X)$ as follows:
$$0=U_{n}^{\alpha(n)} \subseteq U_{n}^{\alpha(n)+1} \subseteq \cdots \subseteq U_{n}^{\beta(n)} = H_{n}(X)$$
For any $r\geq 0$ and $p,q\in \mathbb{Z}$, define $I_{p,q}^{r}$ and $J_{p,q}^{r}$ as in Theorem \ref{02.2}. It is clear that $\left(\textstyle \bigcap_{r=0}^{\infty} I_{p,q}^{r}\right)+F_{p+q}^{p-1}\supseteq \left(\Ker\left(\partial_{p+q}^{X}\right)\cap F_{p+q}^{p}\right)+F_{p+q}^{p-1}$ and $\left(\textstyle \bigcup_{r=0}^{\infty} J_{p,q}^{r}\right)+F_{p+q}^{p-1}\subseteq \left(\im\left(\partial_{p+q+1}^{X}\right)\cap F_{p+q}^{p}\right)+F_{p+q}^{p-1}$. We establish the reverse inequalities. Fix $p,q\in \mathbb{Z}$ with $p+q=n$.

Let $x\in \bigcap_{r=0}^{\infty} I_{p,q}^{r}$. Then $x\in F_{n}^{p}$ and $\partial_{n}^{X}(x)\in F_{n-1}^{p-r}$ for every $r\geq 0$. There is an $r_{0}\geq 1$ such that $p-r_{0}\leq \alpha(n-1)$. Then the hypothesis implies that $H_{n-1}\left(F^{p-r_{0}}\right)=0$, so $\Ker\left(\partial_{n-1}^{F^{p-r_{0}}}\right)= \im\left(\partial_{n}^{F^{p-r_{0}}}\right)$. Consider the following $R$-homomorphisms:
$$F_{n}^{p-r_{0}} \xrightarrow{\partial_{n}^{F^{p-r_{0}}}} F_{n-1}^{p-r_{0}} \xrightarrow{\partial_{n-1}^{F^{p-r_{0}}}} F_{n-2}^{p-r_{0}}$$
We have $\partial_{n}^{X}(x)\in F_{n-1}^{p-r_{0}}$ and $\partial_{n-1}^{F^{p-r_{0}}}\left(\partial_{n}^{X}(x)\right)= \partial_{n-1}^{X}\left(\partial_{n}^{X}(x)\right)=0$, so $\partial_{n}^{X}(x)\in \Ker\left(\partial_{n-1}^{F^{p-r_{0}}}\right)= \im\left(\partial_{n}^{F^{p-r_{0}}}\right)$. Thus there is an element $y\in F_{n}^{p-r_{0}}$ such that $\partial_{n}^{X}(x)= \partial_{n}^{F^{p-r_{0}}}(y)= \partial_{n}^{X}(y)$. But then $x-y\in \Ker\left(\partial_{n}^{X}\right)$, so $x-y=z$ for some $z\in \Ker\left(\partial_{n}^{X}\right)$. Now, $x\in F_{n}^{p}$ and $y\in F_{n}^{p-r_{0}}\subseteq F_{n}^{p}$, so $z=x-y\in F_{n}^{p}$, whence $z\in \Ker\left(\partial_{n}^{X}\right)\cap F_{n}^{p}$. On the other hand, $y\in F_{n}^{p-r_{0}}\subseteq F_{n}^{p-1}$. Consequently, $x=z+y\in \left(\Ker\left(\partial_{n}^{X}\right)\cap F_{n}^{p}\right)+ F_{n}^{p-1}$. It follows that $\left(\textstyle \bigcap_{r=0}^{\infty} I_{p,q}^{r}\right)+F_{n}^{p-1}= \left(\Ker\left(\partial_{n}^{X}\right)\cap F_{n}^{p}\right)+F_{n}^{p-1}$.

Let $y\in \im\left(\partial_{n+1}^{X}\right)\cap F_{n}^{p}$. Then $y\in F_{n}^{p}$ and $y=\partial_{n+1}^{X}(x)$ for some $x\in X_{n+1}$. There is an $r_{0}\geq 1$ such that $p+r_{0}-1\geq \beta(n)$. Then the hypothesis implies that $H_{n}\left(\iota^{p+r_{0}-1}\right):H_{n}\left(F^{p+r_{0}-1}\right)\rightarrow H_{n}(X)$ is an isomorphism. We have $y\in F_{n}^{p}\subseteq F_{n}^{p+r_{0}-1}$ and $\partial_{n}^{F^{p+r_{0}-1}}(y)= \partial_{n}^{X}\left(\partial_{n+1}^{X}(x)\right)= 0$, so $y\in \Ker\left(\partial_{n}^{F^{p+r_{0}-1}}\right)$. But then $H_{n}\left(\iota^{p+r_{0}-1}\right)\left(y+\im\left(\partial_{n+1}^{F^{p+r_{0}-1}}\right)\right)= y+\im\left(\partial_{n+1}^{X}\right)=0$ as $y\in \im\left(\partial_{n+1}^{X}\right)$. Thus $y\in \im\left(\partial_{n+1}^{F^{p+r_{0}-1}}\right)$, so $y\in \partial_{n+1}^{X}\left(F_{n+1}^{p+r_{0}-1}\right)\cap F_{n}^{p} = J_{p,q}^{r_{0}} \subseteq \bigcup_{r=0}^{\infty}J_{p,q}^{r}$. It follows that $\left(\textstyle \bigcup_{r=0}^{\infty} J_{p,q}^{r}\right)+F_{n}^{p-1}= \left(\im\left(\partial_{n+1}^{X}\right)\cap F_{n}^{p}\right)+F_{n}^{p-1}$.

Therefore, the result follows from Theorem \ref{02.2}.
\end{proof}

\begin{corollary} \label{02.4}
Let $R$ be a ring, and $X$ an $R$-complex equipped with a filtration
$$\cdots \subseteq F^{p-1} \subseteq F^{p} \subseteq \cdots \subseteq X$$
such that the induced filtration of $H_{n}(X)$ is finite for every $n\in \mathbb{Z}$. If $\bigcap_{p\in \mathbb{Z}}F_{n}^{p}=0$ and $\bigcup_{p\in \mathbb{Z}}F_{n}^{p}=X_{n}$ for every $n\in \mathbb{Z}$, then there exists a spectral sequence as follows:
$$\textstyle E_{p,q}^{1}=H_{p+q}\left(\frac{F^{p}}{F^{p-1}}\right) \Rightarrow H_{p+q}(X)$$
\end{corollary}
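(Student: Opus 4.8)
The plan is to deduce this from Theorem \ref{02.2} by checking its two hypotheses on the submodules $I_{p,q}^{r}$ and $J_{p,q}^{r}$. Finiteness of the induced filtration on $H_{n}(X)$ is assumed outright, so (writing $n=p+q$) all that remains is to verify, for every $p,q\in\mathbb{Z}$, the equalities
$$\left(\textstyle\bigcap_{r=0}^{\infty}I_{p,q}^{r}\right)+F_{n}^{p-1}=\left(\Ker\left(\partial_{n}^{X}\right)\cap F_{n}^{p}\right)+F_{n}^{p-1}$$
and
$$\left(\textstyle\bigcup_{r=0}^{\infty}J_{p,q}^{r}\right)+F_{n}^{p-1}=\left(\im\left(\partial_{n+1}^{X}\right)\cap F_{n}^{p}\right)+F_{n}^{p-1}.$$
In each case one inclusion is immediate from the definitions, exactly as in the proof of Corollary \ref{02.3}: if $x\in\Ker\left(\partial_{n}^{X}\right)\cap F_{n}^{p}$ then $\partial_{n}^{X}(x)=0\in F_{n-1}^{p-r}$ for all $r$, so $x\in\bigcap_{r}I_{p,q}^{r}$; and each $J_{p,q}^{r}$ is contained in $\im\left(\partial_{n+1}^{X}\right)\cap F_{n}^{p}$ by construction. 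So everything reduces to the reverse inclusions, and this is precisely where the exhaustiveness and Hausdorff hypotheses enter.

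For the first equality, take $x\in\bigcap_{r=0}^{\infty}I_{p,q}^{r}$. Then $x\in F_{n}^{p}$ and $\partial_{n}^{X}(x)\in F_{n-1}^{p-r}$ for every $r\geq 0$; since $\{p-r:r\geq 0\}$ is cofinal from below in $\mathbb{Z}$ and the filtration on $X_{n-1}$ is increasing, this forces $\partial_{n}^{X}(x)\in\bigcap_{s\in\mathbb{Z}}F_{n-1}^{s}=0$. Hence $x\in\Ker\left(\partial_{n}^{X}\right)\cap F_{n}^{p}$, so in fact $\bigcap_{r}I_{p,q}^{r}=\Ker\left(\partial_{n}^{X}\right)\cap F_{n}^{p}$, and the equality follows after adding $F_{n}^{p-1}$ to both sides.

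For the second equality, take $y\in\im\left(\partial_{n+1}^{X}\right)\cap F_{n}^{p}$, say $y=\partial_{n+1}^{X}(x)$ with $x\in X_{n+1}$. Since $\bigcup_{s\in\mathbb{Z}}F_{n+1}^{s}=X_{n+1}$, there is an integer $s$ with $x\in F_{n+1}^{s}$; choosing $r_{0}\geq 1$ with $p+r_{0}-1\geq s$ gives $x\in F_{n+1}^{p+r_{0}-1}$, hence $y\in\partial_{n+1}^{X}\left(F_{n+1}^{p+r_{0}-1}\right)\cap F_{n}^{p}=J_{p,q}^{r_{0}}\subseteq\bigcup_{r}J_{p,q}^{r}$. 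Thus $\im\left(\partial_{n+1}^{X}\right)\cap F_{n}^{p}\subseteq\bigcup_{r}J_{p,q}^{r}$, giving the equality. Both hypotheses of Theorem \ref{02.2} now hold, so the asserted spectral sequence exists. \textbf{There is no substantial obstacle here}: the content is simply that $\bigcap_{p}F_{n}^{p}=0$ is exactly what collapses the $I$-condition, while $\bigcup_{p}F_{n}^{p}=X_{n}$ is exactly what guarantees that every boundary lying in $F_{n}^{p}$ is eventually a boundary of an element of some $F_{n+1}^{p+r-1}$, which is the $J$-condition. The only mild care needed is to state the cofinality/Hausdorff step correctly and to keep the index bookkeeping aligned with the definitions of $I_{p,q}^{r}$ and $J_{p,q}^{r}$.
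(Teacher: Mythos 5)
Your proof is correct and follows essentially the same route as the paper: both reduce to Theorem \ref{02.2} by showing the exact equalities $\bigcap_{r}I_{p,q}^{r}=\Ker(\partial_{n}^{X})\cap F_{n}^{p}$ and $\bigcup_{r}J_{p,q}^{r}=\im(\partial_{n+1}^{X})\cap F_{n}^{p}$, using that preimages commute with intersections and images with directed unions together with the Hausdorff and exhaustion hypotheses. The paper invokes these commutation facts directly while you spell them out element by element, but the content is the same.
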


\begin{proof}
For any $r\geq 0$ and $p,q\in \mathbb{Z}$, define $I_{p,q}^{r}$ and $J_{p,q}^{r}$ as in Theorem \ref{02.2}. Using the hypothesis, we have:
$$\textstyle \bigcap_{r=0}^{\infty}I_{p,q}^{r} = \bigcap_{r=0}^{\infty}\left(\left(\partial_{p+q}^{X}\right)^{-1}\left(F_{p+q-1}^{p-r}\right)\cap F_{p+q}^{p}\right)
 = \left(\partial_{p+q}^{X}\right)^{-1}\left(\bigcap_{r=0}^{\infty}F_{p+q-1}^{p-r}\right)\cap F_{p+q}^{p} = \Ker\left(\partial_{p+q}^{X}\right)\cap F_{p+q}^{p}$$
and
$$\textstyle \bigcup_{r=0}^{\infty}J_{p,q}^{r} = \bigcup_{r=0}^{\infty}\left(\partial_{p+q+1}^{X}\left(F_{p+q+1}^{p+r-1}\right)\cap F_{p+q}^{p}\right)
 = \partial_{p+q+1}^{X}\left(\bigcup_{r=0}^{\infty}F_{p+q+1}^{p+r-1}\right)\cap F_{p+q}^{p} = \im\left(\partial_{p+q+1}^{X}\right)\cap F_{p+q}^{p}$$
Therefore, the result follows from Theorem \ref{02.2}.
\end{proof}

\begin{corollary} \label{02.5}
Let $R$ be a ring, and $X$ an $R$-complex equipped with a filtration
$$\cdots \subseteq F^{p-1} \subseteq F^{p} \subseteq \cdots \subseteq X$$
such that the induced filtration of $X_{n}$ is finite for every $n\in \mathbb{Z}$. Then there exists a spectral sequence as follows:
$$\textstyle E_{p,q}^{1}=H_{p+q}\left(\frac{F^{p}}{F^{p-1}}\right) \Rightarrow H_{p+q}(X)$$
\end{corollary}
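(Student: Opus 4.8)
The plan is to reduce the statement to Corollary \ref{02.4} (equivalently, to Corollary \ref{02.3}). The hypothesis says that for every $n\in\mathbb{Z}$ the induced filtration
\[
\cdots \subseteq F_{n}^{p-1} \subseteq F_{n}^{p} \subseteq \cdots \subseteq X_{n}
\]
is finite, which means there are integers $\alpha(n)\leq \beta(n)$ with $F_{n}^{p}=0$ for all $p\leq \alpha(n)$ and $F_{n}^{p}=X_{n}$ for all $p\geq \beta(n)$. In particular $\bigcap_{p\in\mathbb{Z}}F_{n}^{p}=0$ and $\bigcup_{p\in\mathbb{Z}}F_{n}^{p}=X_{n}$ for every $n\in\mathbb{Z}$, which are exactly two of the three hypotheses of Corollary \ref{02.4}.

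It remains to verify the third hypothesis, that the induced filtration $\cdots \subseteq U_{n}^{p-1} \subseteq U_{n}^{p} \subseteq \cdots \subseteq H_{n}(X)$ of $H_{n}(X)$ is finite for every $n$, where $U_{n}^{p}=\im\left(H_{n}(\iota^{p})\right)$ and $\iota^{p}:F^{p}\rightarrow X$ is the inclusion. The key point is that $H_{n}(-)$ of an $R$-complex depends only on its components in degrees $n-1$, $n$, and $n+1$. Concretely, if $p\leq \alpha(n)$ then $F_{n}^{p}=0$, so $H_{n}(F^{p})=0$ and hence $U_{n}^{p}=0$; and if $p\geq \gamma(n):=\max\{\beta(n-1),\beta(n),\beta(n+1)\}$, then $F^{p}$ agrees with $X$ in degrees $n-1$, $n$, $n+1$, so $H_{n}(\iota^{p}):H_{n}(F^{p})\rightarrow H_{n}(X)$ is an isomorphism and $U_{n}^{p}=H_{n}(X)$. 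Therefore
\[
0=U_{n}^{\alpha(n)}\subseteq U_{n}^{\alpha(n)+1}\subseteq \cdots \subseteq U_{n}^{\gamma(n)}=H_{n}(X)
\]
is a finite filtration, and Corollary \ref{02.4} now produces the desired spectral sequence $E_{p,q}^{1}=H_{p+q}\left(\frac{F^{p}}{F^{p-1}}\right) \Rightarrow H_{p+q}(X)$.

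There is essentially no obstacle here: the argument is a routine specialization, this being the classical case of a filtration bounded in each degree. The only mild care needed is that the stabilization bounds $\alpha(n),\beta(n)$ vary with $n$, so in checking $U_{n}^{p}=H_{n}(X)$ for $p\gg 0$ one must pass to the maximum of the bounds for the three relevant degrees $n-1$, $n$, $n+1$, whereas for $U_{n}^{p}=0$ with $p\ll 0$ the single bound $\alpha(n)$ for degree $n$ already suffices. (Alternatively, one may invoke Corollary \ref{02.3} directly, whose two hypotheses are precisely the facts $H_{n}(F^{p})=0$ for $p\ll 0$ and $H_{n}(\iota^{p})$ an isomorphism for $p\gg 0$ established just above.)
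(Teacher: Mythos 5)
Your proof is correct and follows essentially the same approach as the paper: reduce to Corollary \ref{02.4} by noting $\bigcap_p F_n^p=0$ and $\bigcup_p F_n^p=X_n$, and verify finiteness of the induced filtration on $H_n(X)$ by taking $\gamma(n)=\max\{\beta(n-1),\beta(n),\beta(n+1)\}$ to account for the three degrees on which $H_n$ depends. The only (cosmetic) difference is that the paper writes the finite bound on $X_n$ as starting already at $F_n^{\alpha(n)}=0$, so it uses $p<\alpha(n)$ where you use $p\le\alpha(n)$; the content is identical.
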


\begin{proof}
For any $p\in \mathbb{Z}$, let $\iota^{p}:F^{p}\rightarrow X$ be the inclusion morphism, and set $U_{n}^{p}= \im\left(H_{n}(\iota^{p})\right)$ for every $n\in \mathbb{Z}$. Let $n\in \mathbb{Z}$. By the hypothesis, $X_{n}$ has a finite filtration as follows:
$$0= F_{n}^{\alpha(n)}\subseteq F_{n}^{\alpha(n)+1}\subseteq \cdots \subseteq F_{n}^{\beta(n)}= X_{n}$$
If $p<\alpha(n)$, then $F_{n}^{p}=0$, so $H_{n}(F^{p})=0$, whence $U_{n}^{p}=0$. Let $\gamma(n)= \max\left\{\beta(n-1),\beta(n),\beta(n+1)\right\}$. If $p\geq \gamma(n)$, then $F_{n-1}^{p}=X_{n-1}$, $F_{n}^{p}=X_{n}$, and $F_{n+1}^{p}=X_{n+1}$, so $H_{n}(F^{p})=H_{n}(X)$, whence $U_{n}^{p}=H_{n}(X)$. This implies that there is a finite filtration of $H_{n}(X)$ as follows:
$$0= U_{n}^{\alpha(n)}\subseteq U_{n}^{\alpha(n)+1}\subseteq \cdots \subseteq U_{n}^{\gamma(n)}= H_{n}(X)$$
On the other hand, it is clear that $\bigcap_{p\in \mathbb{Z}}F_{n}^{p} =0$ and $\bigcup_{p\in \mathbb{Z}}F_{n}^{p} =X_{n}$ for every $n\in \mathbb{Z}$. Therefore, the result follows from Corollary \ref{02.4}.
\end{proof}

\section{Connectivity Lemma}

The key point to prove the convergence of Quillen's fundamental spectral sequences is a deep result of Quillen known as the \enquote{Connectivity Lemma} which allows one to deploy Corollary \ref{02.3}. This result appears in \cite[Theorem 6.12]{Qu3} without proof. Also, it is discussed in Quillen's unpublished notes \cite[Lemma 6.5 and Theorem 8.8]{Qu1}, where the argument is confusing and contains gaps to be filled. On the other hand, it is presented in \cite[Chapter XIII, Corollaire 4]{An} whose statement is a special case of Quillen's original version. In this section, we provide a proof of Quillen's original version using the ideas developed in \cite[Chapters XI, XII, and XIII]{An}. Our argument is complete, clear, and contains some new elements different from the references mentioned.

We first recall the notions of simplicial objects, their homotopy groups, simplicial resolutions, and the bar construction in the following remark.

\begin{remark} \label{03.1}
Let $\mathcal{C}$ be a category. A simplicial object over $\mathcal{C}$ is a family $A=\{A_{n}\}_{n\geq 0}$ of objects of $\mathcal{C}$ together with face morphisms $d_{n,i}^{A}:A_{n} \rightarrow A_{n-1}$ for every $n\geq 1$ and $0\leq i\leq n$, and degeneracy morphisms $s_{n,i}^{A}:A_{n} \rightarrow A_{n+1}$ for every $n\geq 0$ and $0\leq i\leq n$, that satisfy the following relations for every $n\geq 0$ and $0\leq i,j \leq n$:
\begin{enumerate}
\item[(i)] $d_{n,i}^{A}d_{n+1,j}^{A}=d_{n,j-1}^{A}d_{n+1,i}^{A}$ if $i<j$.
\item[(ii)] $s_{n,i}^{A}s_{n-1,j}^{A}=s_{n,j+1}^{A}s_{n-1,i}^{A}$ if $i\leq j$.
\item[(iii)]
 \label{eqn:damage piecewise}
 $d_{n,i}^{A}s_{n-1,j}^{A}=
      \begin{cases}
        s_{n-2,j-1}^{A}d_{n-1,i}^{A} & \text{if } i<j \\
        1^{A_{n-1}} & \text{if } i=j \text{ or } j+1 \\
        s_{n-2,j}^{A}d_{n-1,i-1}^{A} & \text{if } i>j+1
      \end{cases}$
\end{enumerate}

\noindent
Given two simplicial objects $A=\{A_{n}\}_{n\geq 0}$ and $B=\{B_{n}\}_{n\geq 0}$ over $\mathcal{C}$, a morphism $\tau:A \rightarrow B$ of simplicial objects is a collection $\tau=(\tau_{n})_{n\geq 0}$ of morphisms $\tau_{n}:A_{n} \rightarrow B_{n}$ of $\mathcal{C}$ for every $n \geq 0$, such that the following diagram is commutative for every $n\geq 1$ and $0\leq i\leq n$:
\begin{equation*}
  \begin{tikzcd}[column sep=3.5em,row sep=2em]
  A_{n-1} \arrow{r}{s_{n-1,i}^{A}} \arrow{d}{\tau_{n-1}}
  & A_{n} \arrow{r}{d_{n,i}^{A}} \arrow{d}{\tau_{n}}
  & A_{n-1} \arrow{d}{\tau_{n-1}}
  \\
  B_{n-1} \arrow{r}{s_{n-1,i}^{B}}
  & B_{n} \arrow{r}{d_{n,i}^{B}}
  & B_{n-1}
\end{tikzcd}
\end{equation*}

\noindent
It is immediate that we have a category $\mathpzc{s}\mathcal{C}$ of simplicial objects over $\mathcal{C}$. As a matter of fact, $\mathpzc{s}\mathcal{C}$ is a functor category. Let $\Delta$ denote the simplex category described by
$$\Obj(\Delta)=\left\{[n]=\{0,1,2,...,n\} \suchthat n\geq 0\right\}$$
and
$$\Mor_{\Delta}\left([n],[m]\right) = \left\{f\in \Mor_{\mathcal{S}\mathpzc{et}}\left([n],[m]\right) \suchthat f \textrm{ is order-preserving}\right\}$$
for every $[n],[m] \in \Obj(\Delta)$. Then it is folklore that $\mathpzc{s}\mathcal{C}=\mathcal{C}^{\Delta^{\op}}$. In other words, a simplicial object over $\mathcal{C}$ is nothing but a contravariant functor $\Delta \rightarrow \mathcal{C}$. In practice, we speak of simplicial sets, simplicial modules, or simplicial commutative algebras when we consider simplicial objects over the categories of sets, modules, or commutative algebras.

Given a ring $R$, one has the normalization functor $\Nor:\mathpzc{s}\mathcal{M}\mathpzc{od}(R) \rightarrow \mathcal{C}_{\geq 0}(R)$ and the Dold-Kan functor $\DK:\mathcal{C}_{\geq 0}(R) \rightarrow \mathpzc{s}\mathcal{M}\mathpzc{od}(R)$; see \cite[Definition 8.3.6, and 8.4.4]{We} or \cite[Definitions 4.5 and 4.9]{Fa}. The Dold-Kan Correspondence Theorem asserts that there is an adjoint equivalence of categories $(\DK,\Nor):\mathcal{C}_{\geq 0}(R) \leftrightarrows \mathpzc{s}\mathcal{M}\mathpzc{od}(R)$; see \cite[Theorem 8.4.1 and Exercise 8.4.2]{We}.

Given a simplicial set that satisfies the so-called Kan condition, one can define its homotopy groups; see \cite[Chapter 8, Section 8.3]{We}. It is folklore that simplicial modules and simplicial commutative algebras satisfy the Kan condition, so one can define their homotopy groups. In particular, if $R$ is a commutative ring and $M$ is a simplicial $R$-module, then for any $n\geq 0$, the $n$th homotopy group of $M$ is given by $\pi_{n}(M)= H_{n}\left(\Nor_{R}(M)\right) \cong H_{n}\left(C_{R}(M)\right)$ where $C_{R}(M)$ is the Moore complex associated with $M$. The same description applies to the homotopy groups of a simplicial commutative $R$-algebra; see \cite[Definition 8.3.6 and Lemma 8.3.7]{We} and \cite[Remark 4.6]{Fa}. We note that it follows from the Dold-Kan Correspondence Theorem that the normalization functor is exact, so if
$$0\rightarrow M'\rightarrow M\rightarrow M''\rightarrow 0$$
is a short exact sequence of simplicial $R$-modules, then we get the following natural exact sequence of homotopy groups:
$$\cdots \rightarrow \pi_{i}(M')\rightarrow \pi_{i}(M)\rightarrow \pi_{i}(M'')\rightarrow \pi_{i-1}(M')\rightarrow \pi_{i-1}(M)\rightarrow \pi_{i-1}(M'')\rightarrow $$$$ \cdots \rightarrow \pi_{0}(M')\rightarrow \pi_{0}(M)\rightarrow \pi_{0}(M'')\rightarrow 0$$
We further note that there is an adjoint pair $(\pi_{0},\Diag):\mathpzc{s}\mathcal{M}\mathpzc{od}(R) \leftrightarrows \mathcal{M}\mathpzc{od}(R)$ of functors in which $\Diag:\mathcal{M}\mathpzc{od}(R) \rightarrow \mathpzc{s}\mathcal{M}\mathpzc{od}(R)$ is the diagonal functor that assigns to each left $R$-module $M$, the constant simplicial left $R$-module $M$, and to each $R$-homomorphism $f:M\rightarrow N$, the morphism $f:M\rightarrow N$ of constant simplicial left $R$-modules.

Given a commutative ring $R$, a deep result of Quillen states that the category of simplicial commutative $R$-algebras is a model category; see \cite[Chapter II, Section 3]{Qu2}, also see \cite{Fa} for a purely algebraic proof. Accordingly, given a commutative $R$-algebra $A$, there exists a cofibrant replacement $A_{\textrm{c}} \rightarrow A$ for $A$ in the category of simplicial commutative $R$-algebras, which is referred to as a \enquote{simplicial algebra resolution} of $A$. In particular, there is an $R$-algebra epimorphism $\varepsilon:(A_{\textrm{c}})_{0}\rightarrow A$ with $\varepsilon d_{1,0}^{A_{\textrm{c}}} = \varepsilon d_{1,1}^{A_{\textrm{c}}}$ that induces \begin{equation*}
 \label{eqn:damage piecewise}
\pi_{n}(A_{\textrm{c}})=
 \begin{dcases}
  A & \textrm{if } n=0 \\
  0 & \textrm{if } n>0.
 \end{dcases}
\end{equation*}
More explicitly, there exists a concrete construction of a simplicial algebra resolution $R\langle X\rangle \rightarrow A$ for $A$ in which $R\langle X\rangle$ is a polynomial algebra in each degree; see \cite[Section 4]{Iy}. We make extensive use of such a resolution in the sequel.

Given a commutative ring $R$, an $R$-algebra $A$, a right $A$-module $M$, and a left $A$-module $N$, the bar construction $\Barr_{R}(M,A,N)$ is a simplicial $R$-module such that for any $n\geq 0$, we have
$$\Barr_{R}(M,A,N)_{n}= M\otimes_{R}T^{n}_{R}(A)\otimes_{R}N$$
where $T^{n}_{R}(A)$ is the $n$-fold tensor product of $A$, and for any $0\leq i\leq n$, the face morphism $d_{n,i}^{\Barr_{R}(M,A,N)}:\Barr_{R}(M,A,N)_{n}\rightarrow \Barr_{R}(M,A,N)_{n-1}$ is given by
$$d_{n,i}^{\Barr_{R}(M,A,N)}(a_{0}\otimes \cdots \otimes a_{n+1})=a_{0}\otimes \cdots \otimes a_{i}a_{i+1} \otimes \cdots \otimes a_{n+1}$$
and the degeneracy morphism $s_{n,i}^{\Barr_{R}(M,A,N)}:\Barr_{R}(M,A,N)_{n}\rightarrow \Barr_{R}(M,A,N)_{n+1}$ is given by
$$s_{n,i}^{\Barr_{R}(M,A,N)}(a_{0}\otimes \cdots \otimes a_{n+1})=a_{0}\otimes \cdots \otimes a_{i}\otimes 1 \otimes a_{i+1} \otimes \cdots \otimes a_{n+1}$$
for every $a_{0}\otimes \cdots \otimes a_{n+1}\in \Barr_{R}(M,A,N)_{n}$. If $A$ is a flat $R$-algebra, and either $M$ or $N$ is a flat $R$-module, then one has $\Tor_{i}^{A}(M,N)\cong \pi_{i}\left(\Barr_{R}(M,A,N)\right)$ for every $i\geq 0$; see \cite[8.6.12 and Example 8.7.2]{We} for a more general version.
\end{remark}

We also need the following lemmas.

\begin{lemma} \label{03.2}
Let $R$ be a ring, $\{a_{\alpha}\}_{\alpha \in \Phi}$ a family of elements of $R$ such that any finitely many distinct elements of it form a regular sequence on $R$ in some order, and $\mathfrak{a}=\langle a_{\alpha} \suchthat \alpha\in \Phi \rangle$. Then $\frac{\mathfrak{a}^{s}}{\mathfrak{a}^{s+1}}$ is a free $\frac{R}{\mathfrak{a}}$-module with a basis $\mathfrak{B}= \{a_{\alpha_{1}} \cdots a_{\alpha_{s}}+\mathfrak{a}^{s+1} \suchthat \alpha_{1},...,\alpha_{s}\in \Phi\}$ for every $s\geq 1$. As a result, there is a natural isomorphism $\Sym_{\frac{R}{\mathfrak{a}}}^{s}\left(\frac{\mathfrak{a}}{\mathfrak{a}^{2}}\right)\cong \frac{\mathfrak{a}^{s}}{\mathfrak{a}^{s+1}}$ for every $s\geq 1$.
\end{lemma}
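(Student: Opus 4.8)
The plan is to prove the two assertions in order: first the freeness of $\mathfrak{a}^{s}/\mathfrak{a}^{s+1}$ over $R/\mathfrak{a}$ with the monomial basis $\mathfrak{B}$, and then deduce the symmetric-power isomorphism as a formal consequence. For the second part, observe that $\Sym^{s}_{R/\mathfrak{a}}$ of a free module with basis $\{\bar a_\alpha\}$ is free with basis the degree-$s$ monomials in the $\bar a_\alpha$, and there is an obvious surjective $R/\mathfrak{a}$-algebra map $\Sym_{R/\mathfrak{a}}(\mathfrak{a}/\mathfrak{a}^2)\to \bigoplus_{s\geq 0}\mathfrak{a}^{s}/\mathfrak{a}^{s+1}$ sending $\bar a_\alpha\mapsto a_\alpha+\mathfrak{a}^2$; once we know both sides are free on monomials of matching cardinality in each degree, this map is an isomorphism in degree $s$. (One must note $\mathfrak{a}/\mathfrak{a}^2$ is itself free over $R/\mathfrak{a}$ on $\{\bar a_\alpha\}$, which is the case $s=1$ of the first assertion.) So everything reduces to the first assertion.

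For the first assertion, the generating set is clear: $\mathfrak{a}^{s}$ is generated as an $R$-module by the products $a_{\alpha_1}\cdots a_{\alpha_s}$, so $\mathfrak{B}$ spans $\mathfrak{a}^{s}/\mathfrak{a}^{s+1}$ over $R/\mathfrak{a}$. The content is $R/\mathfrak{a}$-linear independence, i.e. if $\sum r_{\alpha_1\cdots\alpha_s}\,a_{\alpha_1}\cdots a_{\alpha_s}\in\mathfrak{a}^{s+1}$ then every $r_{\alpha_1\cdots\alpha_s}\in\mathfrak{a}$. I would reduce to finitely many of the $a_\alpha$ at a time (any such relation involves only finitely many indices and the $\mathfrak{a}^{s+1}$-membership, being a finite sum, also involves only finitely many), so we may assume $\Phi=\{1,\dots,n\}$ and $a_1,\dots,a_n$ is a regular sequence on $R$ in some order — and after reindexing, a regular sequence as written. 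Now the standard fact is that for a regular sequence $a_1,\dots,a_n$ in a ring $R$, the associated graded ring $\bigoplus_s \mathfrak{a}^{s}/\mathfrak{a}^{s+1}$ is the polynomial ring $(R/\mathfrak{a})[T_1,\dots,T_n]$ via $T_i\mapsto \bar a_i$; this is exactly the statement that the $\bar a_i$ are algebraically independent over $R/\mathfrak{a}$ in the associated graded ring, which unwinds to precisely the linear-independence claim in each degree $s$.

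I would prove this associated-graded statement by induction on $n$, which is the heart of the argument. The base case $n=1$: if $r a_1^{s}\in (a_1^{s+1})$, write $r a_1^s = r' a_1^{s+1}$, so $(r-r'a_1)a_1^s=0$; since $a_1$ is a nonzerodivisor, $a_1^s$ is too, hence $r=r'a_1\in\mathfrak{a}$. For the inductive step, set $\mathfrak{b}=\langle a_1,\dots,a_{n-1}\rangle$, so $\mathfrak{a}=\mathfrak{b}+\langle a_n\rangle$, and $a_n$ is a nonzerodivisor on $R/\mathfrak{b}$. Given a degree-$s$ relation, group monomials by the power of $a_n$ they contain and argue by descending induction on that power, using at each stage the inductive hypothesis for $\mathfrak{b}$ in $R$ together with the fact that $a_n$ is regular modulo $\mathfrak{b}$ to strip off one factor of $a_n$ at a time. \textbf{The main obstacle} I anticipate is the bookkeeping in this inductive step — keeping track of which monomials land in $\mathfrak{a}^{s+1}$ versus $\mathfrak{b}\cdot(\text{lower }a_n\text{-degree})$, and handling the "in some order" hypothesis cleanly (the fact that being a regular sequence is, for these rings, permutation-invariant, or at least that the conclusion we need is). An alternative that sidesteps some of this is to invoke the Koszul-complex / Rees-algebra description of $\mathrm{gr}_{\mathfrak a}(R)$ for a regular sequence directly from the literature (e.g. Matsumura or Bruns–Herzog), but since the paper is expository I expect the author may prefer the self-contained induction; either way the cardinality count matching $\mathfrak{B}$ with the monomial basis of $(R/\mathfrak a)[T_\alpha]_s$ is immediate once independence is established.
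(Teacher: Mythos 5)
Your proposal is correct and follows the same overall route as the paper: observe that $\mathfrak{B}$ spans, reduce linear independence to finitely many of the $a_\alpha$ at a time, reorder so that the chosen finite subfamily is a regular sequence, invoke the polynomial structure of the associated graded ring for a finite regular sequence, and then obtain the $\Sym$-isomorphism by matching bases (the paper does this via an explicit pair of maps $\zeta,\xi$ with $\xi\zeta=1$ checked on generators, which is the same thing as your bijection-on-bases argument). The one substantive divergence is in how the associated-graded fact is obtained: you guess the paper proves it from scratch by induction on $n$, and you sketch that induction but flag the inductive step as the main obstacle and leave it incomplete. The paper instead simply cites \cite[Theorem 1.1.7]{BH}, which is exactly the statement that for a regular sequence $a_1,\dots,a_n$ the natural map $(R/\mathfrak{a})[T_1,\dots,T_n]\to\operatorname{gr}_{\mathfrak{a}}(R)$ is an isomorphism; in other words, the alternative you mention in passing ("invoke \dots Bruns--Herzog") is what the paper actually does. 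This makes the paper's proof complete and short, whereas as written your primary plan has a gap in the inductive step; if you fill it in (descending induction on the $a_n$-degree, using regularity of $a_n$ on $R/\mathfrak{b}$ together with the inductive hypothesis for $\mathfrak{b}=(a_1,\dots,a_{n-1})$) you get a self-contained proof of the same fact, at the cost of reproving a standard result.
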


\begin{proof}
Let $s\geq 1$. It is clear that $\mathfrak{a}^{s}=\langle a_{\alpha_{1}} \cdots a_{\alpha_{s}} \suchthat \alpha_{1},...,\alpha_{s}\in \Phi \rangle$, so $\frac{\mathfrak{a}^{s}}{\mathfrak{a}^{s+1}}$ is generated by $\mathfrak{B}$ as an $R$-module, hence as an $\frac{R}{\mathfrak{a}}$-module. On the other hand, if $\sum_{j=1}^{m}\left(r_{j}+\mathfrak{a}\right)\left(a_{\alpha_{1j}} \cdots a_{\alpha_{sj}} + \mathfrak{a}^{s+1}\right) = 0$ for some $r_{1},...,r_{m}\in R$, then $\sum_{j=1}^{m}r_{j}a_{\alpha_{1j}} \cdots a_{\alpha_{sj}} \in \mathfrak{a}^{s+1}$. Let $a_{1},...,a_{n}$ be the distinct elements that appear among $a_{\alpha_{ij}}$'s for every $1\leq i\leq s$ and $1\leq j\leq m$, and also among the generators of $\mathfrak{a}^{s+1}$ in the expression of $\sum_{j=1}^{m}r_{j}a_{\alpha_{1j}} \cdots a_{\alpha_{sj}}$. Allowing some coefficients and exponents to be zero if necessary, we can rewrite the latter inclusion as $\sum_{i_{1}+ \cdots +i_{n}=s} r_{i_{1},...,i_{n}} a_{1}^{i_{1}} \cdots a_{n}^{i_{n}} \in \langle a_{1},...,a_{n} \rangle^{s+1} \subseteq \mathfrak{a}^{s+1}$ where every $r_{i_{1},...,i_{n}} \in R$. Assuming $a_{1},...,a_{n}$ are in the right order to form a regular sequence on $R$, we deduce from \cite[Theorem 1.1.7]{BH} that every $r_{i_{1},...,i_{n}} \in \langle a_{1},...,a_{n} \rangle \subseteq \mathfrak{a}$. But it is obvious that $r_{1},...,r_{m}$ are among $r_{i_{1},...,i_{n}}$'s, so $r_{1},...,r_{m}\in \mathfrak{a}$. That is, $\mathfrak{B}$ is linearly independent.

For the next part, we note that by using the universal property of symmetric powers, we can define an $\frac{R}{\mathfrak{a}}$-epimorphism $\zeta:\Sym_{\frac{R}{\mathfrak{a}}}^{s}\left(\frac{\mathfrak{a}}{\mathfrak{a}^{2}}\right) \rightarrow \frac{\mathfrak{a}^{s}}{\mathfrak{a}^{s+1}}$ by setting $\zeta\left(\left(a_{1}+\mathfrak{a}^{2}\right)\ast \cdots \ast \left(a_{s}+\mathfrak{a}^{2}\right)\right)= a_{1}\cdots a_{s}+ \mathfrak{a}^{s+1}$ for every $a_{1},...,a_{s}\in \mathfrak{a}$. On the other hand, since $\frac{\mathfrak{a}^{s}}{\mathfrak{a}^{s+1}}$ is a free $\frac{R}{\mathfrak{a}}$-module, we can define an $\frac{R}{\mathfrak{a}}$-homomorphism $\xi:\frac{\mathfrak{a}^{s}}{\mathfrak{a}^{s+1}} \rightarrow \Sym_{\frac{R}{\mathfrak{a}}}^{s}\left(\frac{\mathfrak{a}}{\mathfrak{a}^{2}}\right)$ by setting $\xi\left(a_{\alpha_{1}}\cdots a_{\alpha_{s}}+ \mathfrak{a}^{s+1}\right)= \left(a_{\alpha_{1}}+\mathfrak{a}^{2}\right) \ast \cdots \ast \left(a_{\alpha_{s}}+\mathfrak{a}^{2}\right)$ for every $\alpha_{1},...,\alpha_{s}\in \Phi$. By checking on the basis elements, we observe that $\xi\zeta=1^{\Sym_{\frac{R}{\mathfrak{a}}}^{s}\left(\frac{\mathfrak{a}}{\mathfrak{a}^{2}}\right)}$, so $\zeta$ is injective, hence an isomorphism.
\end{proof}

\begin{lemma} \label{03.3}
Let $R$ be a ring, $M$ a simplicial right $R$-module, and $N$ a simplicial left $R$-module. Then the following natural isomorphism of $\mathbb{Z}$-modules holds for every $i\geq 0$:
$$\pi_{i}(M\otimes_{R}N) \cong H_{i}\left(\Nor_{R}(M)\otimes_{R}\Nor_{R}(N)\right)$$
\end{lemma}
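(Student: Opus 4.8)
After unwinding the definition of homotopy groups, the assertion is an instance of the (normalized) Eilenberg--Zilber theorem, and that is the route I would take. First note that the degreewise tensor product $M\otimes_{R}N$, with $(M\otimes_{R}N)_{n}=M_{n}\otimes_{R}N_{n}$ and structure maps $d_{n,i}^{M}\otimes d_{n,i}^{N}$ and $s_{n,i}^{M}\otimes s_{n,i}^{N}$, is a genuine simplicial $\mathbb{Z}$-module: these diagonal operators are well-defined on $M_{n}\otimes_{R}N_{n}$ precisely because the face and degeneracy maps of $M$ are right $R$-linear and those of $N$ are left $R$-linear. By the description of homotopy groups recalled in Remark~\ref{03.1}, $\pi_{i}(M\otimes_{R}N)=H_{i}\!\left(\Nor_{\mathbb{Z}}(M\otimes_{R}N)\right)\cong H_{i}\!\left(C_{\mathbb{Z}}(M\otimes_{R}N)\right)$, where $C_{\mathbb{Z}}$ denotes the Moore complex. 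So it suffices to produce a natural isomorphism $H_{i}\!\left(C_{\mathbb{Z}}(M\otimes_{R}N)\right)\cong H_{i}\!\left(\Nor_{R}(M)\otimes_{R}\Nor_{R}(N)\right)$.

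I would split this into two halves. First, $\Nor_{R}(M)$ is a direct summand of the Moore complex $C_{R}(M)$ with complementary summand the degenerate subcomplex, which is contractible, the splitting data and contraction being assembled from the $R$-linear operators $d_{n,i}$ and $s_{n,i}$; hence $\Nor_{R}(M)\hookrightarrow C_{R}(M)$ is a chain homotopy equivalence of complexes of right $R$-modules whose homotopy inverse and homotopy may be taken $R$-linear, and likewise $\Nor_{R}(N)\hookrightarrow C_{R}(N)$ over the left. Since $-\otimes_{R}-$ carries a pair of such chain homotopy equivalences to a chain homotopy equivalence (the $R$-linear homotopies tensor), we may replace the right-hand side and reduce to a natural isomorphism $H_{i}\!\left(C_{R}(M\otimes_{R}N)\right)\cong H_{i}\!\left(C_{R}(M)\otimes_{R}C_{R}(N)\right)$.

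Second, this remaining isomorphism is the Eilenberg--Zilber theorem. I would write down the Alexander--Whitney chain map $AW\colon C_{R}(M\otimes_{R}N)\to C_{R}(M)\otimes_{R}C_{R}(N)$ (front faces on the $M$-coordinate, back faces on the $N$-coordinate) and the shuffle chain map $\nabla\colon C_{R}(M)\otimes_{R}C_{R}(N)\to C_{R}(M\otimes_{R}N)$ (signed sum over $(p,q)$-shuffles of iterated degeneracies), check that $AW\circ\nabla=\mathrm{id}$, and exhibit a natural chain homotopy $\nabla\circ AW\simeq\mathrm{id}$. What makes this go through in the stated generality is that $AW$, $\nabla$, and the homotopy are built solely from the face and degeneracy operators, which are $R$-linear; hence all three are well-defined on the $R$-balanced (not merely $\mathbb{Z}$-balanced) tensor products and are natural in $M$ and $N$. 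Alternatively one may invoke the classical statement for simplicial abelian groups, e.g.\ \cite[Section~8.5]{We}, together with a verification that the comparison maps respect the relations defining $\otimes_{R}$. Concatenating the resulting natural isomorphisms yields $\pi_{i}(M\otimes_{R}N)\cong H_{i}\!\left(\Nor_{R}(M)\otimes_{R}\Nor_{R}(N)\right)$.

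The main obstacle is the Eilenberg--Zilber theorem itself: exhibiting the explicit chain homotopy witnessing $\nabla\circ AW\simeq\mathrm{id}$ with its shuffle-sign bookkeeping, and --- since $R$ need not be commutative and $M\otimes_{R}N$ carries no $R$-action --- confirming cleanly that every map in the argument descends from $\otimes_{\mathbb{Z}}$ to $\otimes_{R}$ and stays natural. The Dold--Kan normalization comparison and the contractibility of the degenerate subcomplex are standard input supplied by Remark~\ref{03.1} and the literature.
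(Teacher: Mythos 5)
Your proposal is correct and follows essentially the same route as the paper, which simply cites the normalized Eilenberg--Zilber theorem (in the form of \cite[Corollaries 5.7 and 5.10]{Fa}) to chain $\pi_{i}(M\otimes_{R}N)=H_{i}(\Nor_{R}(M\otimes_{R}N))\cong H_{i}(\Nor_{R}(M)\otimes_{R}\Nor_{R}(N))$. You unpack that citation into the Alexander--Whitney/shuffle comparison and the $R$-linearity check, but the underlying argument is the same.
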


\begin{proof}
In light of \cite[Corollaries 5.7 and 5.10]{Fa}, we have following natural isomorphisms for every $i\geq 0$:
$$\pi_{i}(M\otimes_{R}N) = H_{i}\left(\Nor_{R}(M\otimes_{R}N)\right) \cong H_{i}\left(\Nor_{R}(M)\boxtimes_{R}\Nor_{R}(N)\right) \cong H_{i}\left(\Nor_{R}(M)\otimes_{R}\Nor_{R}(N)\right)$$
\end{proof}

\begin{corollary} \label{03.4}
Let $R$ be a ring, $M$ a simplicial right $R$-module, and $N$ a simplicial left $R$-module. Suppose that either $M_{i}$ is a flat right $R$-module for every $i\geq 0$, or $N_{i}$ is a flat left $R$-module for every $i\geq 0$. Then the following natural isomorphism of $\mathbb{Z}$-modules holds:
$$\pi_{0}(M\otimes_{R}N) \cong \pi_{0}(M)\otimes_{R}\pi_{0}(N)$$
\end{corollary}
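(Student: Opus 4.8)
The plan is to reduce the statement to Lemma~\ref{03.3} together with an unconditional fact about the zeroth homology of a tensor product of nonnegatively graded complexes. Taking $i=0$ in Lemma~\ref{03.3} yields a natural isomorphism $\pi_{0}(M\otimes_{R}N)\cong H_{0}\!\left(\Nor_{R}(M)\otimes_{R}\Nor_{R}(N)\right)$, while by definition $\pi_{0}(M)=H_{0}\!\left(\Nor_{R}(M)\right)$ and $\pi_{0}(N)=H_{0}\!\left(\Nor_{R}(N)\right)$. Since $\Nor_{R}(M)$ and $\Nor_{R}(N)$ lie in $\mathcal{C}_{\geq 0}(R)$, it therefore suffices to prove the following: for every complex $C$ of right $R$-modules and every complex $D$ of left $R$-modules with $C_{n}=D_{n}=0$ for $n<0$, there is a natural isomorphism $H_{0}(C\otimes_{R}D)\cong H_{0}(C)\otimes_{R}H_{0}(D)$; applying this to $C=\Nor_{R}(M)$ and $D=\Nor_{R}(N)$ and composing the isomorphisms above gives the corollary.

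To prove this fact I would compute both sides as quotients of $C_{0}\otimes_{R}D_{0}$. Because $C$ and $D$ are nonnegatively graded, $(C\otimes_{R}D)_{0}=C_{0}\otimes_{R}D_{0}$ and $(C\otimes_{R}D)_{1}=\left(C_{1}\otimes_{R}D_{0}\right)\oplus\left(C_{0}\otimes_{R}D_{1}\right)$, with differential acting on the two summands by $\partial_{1}^{C}\otimes\id$ and (up to sign) $\id\otimes\partial_{1}^{D}$; hence $H_{0}(C\otimes_{R}D)=\left(C_{0}\otimes_{R}D_{0}\right)/(B+B')$, where $B$ and $B'$ are the images of $C_{1}\otimes_{R}D_{0}$ and $C_{0}\otimes_{R}D_{1}$ in $C_{0}\otimes_{R}D_{0}$. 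On the other side, applying $-\otimes_{R}D_{0}$ to the exact sequence $C_{1}\to C_{0}\to H_{0}(C)\to 0$ and using right exactness of the tensor product identifies $H_{0}(C)\otimes_{R}D_{0}$ with $\left(C_{0}\otimes_{R}D_{0}\right)/B$; then applying $H_{0}(C)\otimes_{R}-$ to $D_{1}\to D_{0}\to H_{0}(D)\to 0$ and again invoking right exactness identifies $H_{0}(C)\otimes_{R}H_{0}(D)$ with the cokernel of the map $H_{0}(C)\otimes_{R}D_{1}\to H_{0}(C)\otimes_{R}D_{0}$ induced by $\partial_{1}^{D}$. Chasing the commutative square relating $C_{0}\otimes_{R}D_{1}\to C_{0}\otimes_{R}D_{0}$ to the corresponding map at the $H_{0}(C)\otimes_{R}(-)$ level (both vertical comparison maps being surjections) shows that this cokernel is exactly $\left(C_{0}\otimes_{R}D_{0}\right)/(B+B')$. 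Every step here is functorial in $C$ and $D$, hence the resulting isomorphism is natural in $M$ and $N$.

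I expect no serious obstacle. It is worth noting that this argument never uses the flatness hypothesis: $\Tor$-terms contribute to $H_{*}(C\otimes_{R}D)$ only in positive degrees, so the edge term $H_{0}$ is insensitive to them; one could thus drop the flatness assumption, or keep it merely to match the context in which the corollary is later applied. The only point requiring care is the bookkeeping in the second paragraph — verifying that the two successive right-exactness steps really produce the submodules $B$ and $B'$ of $C_{0}\otimes_{R}D_{0}$ and that they reassemble into $B+B'$, rather than collapsing to $B$ alone or to something strictly larger. This is precisely what the diagram chase with the surjections $C_{0}\otimes_{R}D_{1}\twoheadrightarrow H_{0}(C)\otimes_{R}D_{1}$ and $C_{0}\otimes_{R}D_{0}\twoheadrightarrow H_{0}(C)\otimes_{R}D_{0}$ accomplishes.
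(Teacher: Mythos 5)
Your proposal is correct, but it takes a genuinely different and in fact more elementary route than the paper. After reducing via Lemma~\ref{03.3} (same first step as the paper), the paper invokes the K\"unneth spectral sequence $E_{p,q}^{2}=\bigoplus_{i+j=q}\Tor_{p}^{R}\bigl(H_{i}(\Nor_{R}(M)),H_{j}(\Nor_{R}(N))\bigr)\Rightarrow H_{p+q}\bigl(\Nor_{R}(M)\otimes_{R}\Nor_{R}(N)\bigr)$ (Rotman's Theorem~10.90), whose existence requires the degreewise flatness of one of the two complexes, and then reads off the corner isomorphism $E_{0,0}^{2}\cong H_{0}$. You instead compute $H_{0}$ of the total tensor complex directly from right exactness of $-\otimes_{R}-$, showing both $H_{0}(C\otimes_{R}D)$ and $H_{0}(C)\otimes_{R}H_{0}(D)$ equal $(C_{0}\otimes_{R}D_{0})/(B+B')$, and you correctly observe that this argument does not use the flatness hypothesis at all; the identification of $\im(\partial_{1}^{C\otimes_{R}D})$ with $B+B'$ and the two successive right-exactness reductions are all sound. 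What the paper's spectral-sequence approach buys is uniformity with the immediately following Corollary~\ref{03.5}, which reuses the very same spectral sequence in a way that genuinely needs the flatness assumption and has no elementary counterpart; your approach buys a strictly weaker hypothesis and no appeal to spectral-sequence machinery for this particular corollary.
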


\begin{proof}
Suppose that $M_{i}$ is a flat right $R$-module for every $i\geq 0$. For any $i\geq 0$, $\Nor_{R}(M)_{i}$ is a direct summand of $M_{i}$ by \cite[Lemma 8.3.7]{We}, so we conclude that $\Nor_{R}(M)_{i}$ is a flat right $R$-module. As a result, by \cite[Theorem 10.90]{Ro}, there is a first quadrant spectral sequence as follows:
$$E_{p,q}^{2}= \bigoplus_{i+j=q}\Tor_{p}^{R}\left(H_{i}\left(\Nor_{R}(M)\right),H_{j}\left(\Nor_{R}(N)\right)\right) \Rightarrow H_{p+q}\left(\Nor_{R}(M)\otimes_{R}\Nor_{R}(N)\right)$$
As a result, we have the low-degree isomorphism $E_{0,0}^{2} \cong H_{0}$. However, since $H_{i}\left(\Nor_{R}(M)\right)=0$ for every $i<0$ and $H_{j}\left(\Nor_{R}(N)\right)=0$ for every $j<0$, we have:
$$E_{0,0}^{2} = \bigoplus_{i+j=0}\Tor_{0}^{R}\left(H_{i}\left(\Nor_{R}(M)\right),H_{j}\left(\Nor_{R}(N)\right)\right) \cong H_{0}\left(\Nor_{R}(M)\right)\otimes_{R}H_{0}\left(\Nor_{R}(N)\right) = \pi_{0}(M)\otimes_{R}\pi_{0}(N)$$
On the other hand, by Lemma \ref{03.3}, we have $H_{0}=H_{0}\left(\Nor_{R}(M)\otimes_{R}\Nor_{R}(N)\right)\cong \pi_{0}(M\otimes_{R}N)$. Therefore, the result follows. The other case is similarly established.
\end{proof}

The following corollary generalizes \cite[Chapter VIII, Lemme 18]{An} by replacing \enquote{projectivity} with \enquote{flatness}. The argument here is different as well.

\begin{corollary} \label{03.5}
Let $R$ be a ring, $M$ a simplicial right $R$-module, $N$ a simplicial left $R$-module, and $m,n\geq 0$. Suppose that either $M_{i}$ is a flat right $R$-module for every $i\geq 0$, or $N_{i}$ is a flat left $R$-module for every $i\geq 0$. If $\pi_{i}(M)=0$ for every $i\leq m$ and $\pi_{i}(N)=0$ for every $i\leq n$, then $\pi_{i}(M\otimes_{R}N)=0$ for every $i\leq m+n+1$.
\end{corollary}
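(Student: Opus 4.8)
The plan is to reduce the statement to one about the homology of a tensor product of complexes via Lemma \ref{03.3}, and then exploit the same Künneth/Tor spectral sequence that was used in the proof of Corollary \ref{03.4}. First I would invoke Lemma \ref{03.3} to replace $\pi_i(M\otimes_R N)$ by $H_i\left(\Nor_R(M)\otimes_R\Nor_R(N)\right)$, so that the claim becomes: the complex $\Nor_R(M)\otimes_R\Nor_R(N)$ has vanishing homology in all degrees $\leq m+n+1$. Note that $\Nor_R(M)$ and $\Nor_R(N)$ are complexes concentrated in nonnegative degrees with $H_i\left(\Nor_R(M)\right)=\pi_i(M)=0$ for $i\leq m$ and $H_j\left(\Nor_R(N)\right)=\pi_j(N)=0$ for $j\leq n$.

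Assuming (say) that $M_i$ is a flat right $R$-module for every $i\geq 0$, I would observe as in Corollary \ref{03.4} that $\Nor_R(M)_i$ is a direct summand of $M_i$ by \cite[Lemma 8.3.7]{We}, hence flat, so $\Nor_R(M)$ is a bounded-below complex of flat right $R$-modules. This is exactly the hypothesis needed to apply \cite[Theorem 10.90]{Ro}, yielding the first quadrant spectral sequence
$$E_{p,q}^{2}= \bigoplus_{i+j=q}\Tor_{p}^{R}\left(H_{i}\left(\Nor_{R}(M)\right),H_{j}\left(\Nor_{R}(N)\right)\right) \Rightarrow H_{p+q}\left(\Nor_{R}(M)\otimes_{R}\Nor_{R}(N)\right).$$

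The core of the argument is then a degree count on the $E^{2}$-page. A summand $\Tor_{p}^{R}\left(H_{i}\left(\Nor_{R}(M)\right),H_{j}\left(\Nor_{R}(N)\right)\right)$ of $E_{p,q}^{2}$ can be nonzero only if $H_{i}\left(\Nor_{R}(M)\right)\neq 0$ and $H_{j}\left(\Nor_{R}(N)\right)\neq 0$, which by the vanishing ranges forces $i\geq m+1$ and $j\geq n+1$, hence $q=i+j\geq m+n+2$ and therefore $p+q\geq m+n+2$. Thus $E_{p,q}^{2}=0$ whenever $p+q\leq m+n+1$. Since the spectral sequence is first quadrant, each $E_{p,q}^{\infty}$ is a subquotient of $E_{p,q}^{2}$, so $E_{p,q}^{\infty}=0$ for $p+q\leq m+n+1$; and as $H_{p+q}$ of the abutment carries a finite filtration whose successive quotients are these $E_{p,q}^{\infty}$'s, we get $H_{i}\left(\Nor_{R}(M)\otimes_{R}\Nor_{R}(N)\right)=0$ for $i\leq m+n+1$. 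Combined with Lemma \ref{03.3}, this gives $\pi_{i}(M\otimes_{R}N)=0$ for $i\leq m+n+1$. The case in which $N_{i}$ is flat for every $i\geq 0$ is handled identically after swapping the roles of $M$ and $N$.

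I do not anticipate a genuine obstacle here: the only points requiring attention are the flatness reduction (needed so that \cite[Theorem 10.90]{Ro} applies with the stated $E^{2}$-term, rather than a hyper-Tor variant) and keeping the indices straight, both of which are dispatched by the elementary degree count above.
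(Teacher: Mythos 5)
Your proposal is correct and follows essentially the same route as the paper: both reduce to $H_i\left(\Nor_R(M)\otimes_R\Nor_R(N)\right)$ via Lemma \ref{03.3}, both invoke the Künneth spectral sequence of \cite[Theorem 10.90]{Ro} exactly as set up in Corollary \ref{03.4}, and both conclude by a degree count on the $E^2$-page (the paper observes that $q\leq m+n+1$ forces $i\leq m$ or $j\leq n$ in any summand; you take the contrapositive, which is the same argument).
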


\begin{proof}
Suppose that $M_{i}$ is a flat right $R$-module for every $i\geq 0$. As in the proof of Corollary \ref{03.4}, there is a first quadrant spectral sequence as follows:
$$E_{p,q}^{2}= \bigoplus_{i+j=q}\Tor_{p}^{R}\left(H_{i}\left(\Nor_{R}(M)\right),H_{j}\left(\Nor_{R}(N)\right)\right) \Rightarrow H_{p+q}\left(\Nor_{R}(M)\otimes_{R}\Nor_{R}(N)\right)$$
If $q\leq m+n+1$, then in view of $q=i+j$, we must either have $i\leq m$ or $j\leq n$. If $i\leq m$, then $H_{i}\left(\Nor_{R}(M)\right)=\pi_{i}(M)=0$, and if $j\leq n$, then $H_{j}\left(\Nor_{R}(N)\right)=\pi_{j}(N)=0$. Thus in any case, $E_{p,q}^{2}=0$. That is, $E_{p,q}^{2}=0$ for every $p,q\in \mathbb{Z}$ with $q\leq m+n+1$, so in view of Lemma \ref{03.3}, we infer that $\pi_{i}(M\otimes_{R}N)\cong H_{i}\left(\Nor_{R}(M)\otimes_{R}\Nor_{R}(N)\right)=0$ for every $i\leq m+n+1$. The other case is similarly established.
\end{proof}

Now we present Quillen's Connectivity Lemma.

\begin{theorem} \label{03.6}
Let $R$ be a commutative ring, and $A$ a commutative $R$-algebra such that the natural $R$-algebra epimorphism $\mu_{A}:A\otimes_{R}A\rightarrow A$, given by $\mu_{A} (a\otimes b)=ab$ for every $a,b\in A$, is an isomorphism. Let $\rho:R\langle X\rangle \rightarrow A$ be a simplicial algebra resolution of $A$, and consider the morphism $\varphi:R\langle X\rangle \otimes_{R}A \rightarrow A$ of simplicial $A$-algebras, given by $\varphi_{n}(p\otimes a)=\rho_{n}(p)a$ for every $n\geq 0$, $p\in R\langle X\rangle_{n}$, and $a\in A$. If $\mathfrak{K}= \Ker(\varphi)$ is regarded as a simplicial ideal in the simplicial $A$-algebra $R\langle X\rangle \otimes_{R}A$, then $\pi_{k}(\mathfrak{K}^{s})=0$ for every $s\geq 0$ and $k\leq s-1$.
\end{theorem}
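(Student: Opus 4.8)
The plan is to work throughout with the simplicial $A$-algebra $B:=R\langle X\rangle\otimes_{R}A$, to compute the homotopy of $\mathfrak{K}$ and of the successive quotients $\mathfrak{K}^{s}/\mathfrak{K}^{s+1}$, and then to climb the $\mathfrak{K}$-adic filtration $B\supseteq\mathfrak{K}\supseteq\mathfrak{K}^{2}\supseteq\cdots$. I would first record the base facts. Each $R\langle X\rangle_{n}$ is a polynomial $R$-algebra, hence $R$-free, so $\Nor_{R}(R\langle X\rangle)$ is a flat $R$-resolution of $A$ and therefore $\pi_{i}(B)\cong\Tor_{i}^{R}(A,A)$; moreover $\varphi=\mu_{A}\circ(\rho\otimes_{R}\mathrm{id}_{A})$ is degreewise surjective and, under the identification $\pi_{0}(B)=A\otimes_{R}A$, the map $\pi_{0}(\varphi)$ becomes $\mu_{A}$. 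The case $s=0$ is vacuous. For $s=1$, the short exact sequence $0\to\mathfrak{K}\to B\xrightarrow{\varphi}A\to 0$ (degreewise split, as $A$ is $A$-free) gives a homotopy long exact sequence in which $\pi_{1}(A)=0$ and $\pi_{0}(\varphi)=\mu_{A}$ is an isomorphism by hypothesis; hence $\pi_{0}(\mathfrak{K})=0$. In particular $V:=\mathfrak{K}/\mathfrak{K}^{2}$ is a simplicial $A$-module with $\pi_{0}(V)=0$.

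Next I would pin down the degreewise structure. The map $\varphi_{n}\colon B_{n}=A[X_{n}]\to A$ is evaluation $x\mapsto\rho_{n}(x)$, so $\mathfrak{K}_{n}$ is generated by the elements $x-\rho_{n}(x)$ $(x\in X_{n})$, which form a regular sequence on $A[X_{n}]$ in any finite subset and any order. Thus Lemma~\ref{03.2} applies degreewise: $\mathfrak{K}_{n}^{s}/\mathfrak{K}_{n}^{s+1}$ is a free $A$-module, naturally isomorphic to $\Sym_{A}^{s}(\mathfrak{K}_{n}/\mathfrak{K}_{n}^{2})$. Naturality in $n$ upgrades this to an isomorphism $\mathfrak{K}^{s}/\mathfrak{K}^{s+1}\cong\Sym_{A}^{s}V$ of simplicial $A$-modules, and the short exact sequences $0\to\mathfrak{K}^{s+1}\to\mathfrak{K}^{s}\to\Sym_{A}^{s}V\to 0$ are degreewise split. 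The $A$-freeness also yields, for every $s\geq 1$, the exact "weight-$s$ Koszul strand'' of simplicial $A$-modules
$$0\to\textstyle\bigwedge_{A}^{s}V\to\bigwedge_{A}^{s-1}V\otimes_{A}\Sym_{A}^{1}V\to\cdots\to\bigwedge_{A}^{1}V\otimes_{A}\Sym_{A}^{s-1}V\to\Sym_{A}^{s}V\to 0 .$$

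The heart of the matter is the claim that $\Sym_{A}^{s}V$ is $(s-1)$-connected for every $s$, which I would prove by induction on $s$. Since $V$ is degreewise $A$-flat (each $\mathfrak{K}_{n}$ is an $A$-module direct summand of $B_{n}$) and $0$-connected, iterating Corollary~\ref{03.5} shows $V^{\otimes t}$ is $(t-1)$-connected; composing with the degreewise surjection $V^{\otimes i}\twoheadrightarrow\bigwedge_{A}^{i}V$ and using that $\pi_{0}$ preserves surjections, every $\bigwedge_{A}^{i}V$ is $0$-connected. In the Koszul strand put $C_{0}=\Sym_{A}^{s}V$ and $C_{i}=\bigwedge_{A}^{i}V\otimes_{A}\Sym_{A}^{s-i}V$; the induction hypothesis together with Corollary~\ref{03.5} makes $C_{i}$ be $(s-1)$-connected for $1\leq i\leq s-1$ and $C_{s}=\bigwedge_{A}^{s}V$ be $0$-connected. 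Splicing the strand into short exact sequences and chasing the homotopy long exact sequences, the homotopy of $C_{0}$ in degree $k$ injects successively into $\pi_{k-1}$ of the first syzygy, $\pi_{k-2}$ of the second, and so on; the shift $+i$ played against the connectivity of $C_{i}$ makes the binding bound $k\leq s-1$, so $\pi_{k}(\Sym_{A}^{s}V)=0$ for $k\leq s-1$.

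Finally the transfer from $\Sym_{A}^{s}V$ back to $\mathfrak{K}^{s}$. From the sequences $0\to\mathfrak{K}^{s+1}\to\mathfrak{K}^{s}\to\Sym_{A}^{s}V\to 0$ and the connectivity just established, the homotopy long exact sequences show that for $k\leq s-1$ the map $\pi_{k}(\mathfrak{K}^{s+1})\to\pi_{k}(\mathfrak{K}^{s})$ is surjective and that for $k\leq s-2$ it is an isomorphism; hence, for each fixed $k$, the groups $\pi_{k}(\mathfrak{K}^{s})$ stabilize as $s\to\infty$, and $\pi_{k}(\mathfrak{K}^{s})$ for $s=k+1$ is a quotient of this stable value. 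It remains to see the stable value vanishes, and this is where the filtration being unbounded below must be confronted: one uses that $\bigcap_{t}\mathfrak{K}_{n}^{t}=0$ for every $n$ (each $A[X_{n}]$ is $\mathfrak{K}_{n}$-adically separated), so the filtration is Hausdorff, and then a convergence argument — carried out either through a spectral-sequence argument as in Section~2 or, more concretely, through the degreewise Koszul/approximation complex that resolves $\mathfrak{K}_{n}^{s}$ over $B_{n}$ (available precisely because $\mathfrak{K}_{n}$ is a complete intersection ideal) — forces the stable values to be zero; with the surjections above this gives $\pi_{k}(\mathfrak{K}^{s})=0$ for all $k\leq s-1$. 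I expect this last step, reconstructing $\mathfrak{K}^{s}$ from its associated graded across an infinite filtration, to be the main obstacle, and it is exactly the point at which the argument sketched in Quillen's notes is incomplete; making the simplicial structure on the auxiliary resolution explicit, and controlling the resulting limit, is where the care of the present account is needed.
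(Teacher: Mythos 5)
Your computation of the associated graded is correct and closely parallels the paper's: the degreewise complete-intersection structure of $\mathfrak{K}_n$ gives $\mathfrak{K}^{s}/\mathfrak{K}^{s+1}\cong\Sym_{A}^{s}V$ with $V=\mathfrak{K}/\mathfrak{K}^{2}$, the base case $\pi_{0}(\mathfrak{K})=0$ follows exactly as you say from $\pi_{0}(\varphi)=\mu_{A}$, and the weight-$s$ Koszul strand together with Corollary~\ref{03.5} does yield $(s-1)$-connectivity of $\Sym_{A}^{s}V$ by induction on $s$. A small slip in that paragraph: $C_{i}=\bigwedge_{A}^{i}V\otimes_{A}\Sym_{A}^{s-i}V$ is only $(s-i)$-connected, not $(s-1)$-connected for $i\geq 2$; the ``shift played against the connectivity'' chase you then describe is what makes the bound $k\leq s-1$ come out, but it contradicts the connectivity you just asserted, so that sentence should be corrected.

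The genuine gap is in the transfer from $\Sym_{A}^{s}V$ back to $\mathfrak{K}^{s}$. The short exact sequences $0\to\mathfrak{K}^{s+1}\to\mathfrak{K}^{s}\to\Sym_{A}^{s}V\to 0$ tell you only that, for fixed $k$, the maps $\pi_{k}(\mathfrak{K}^{s+1})\to\pi_{k}(\mathfrak{K}^{s})$ are isomorphisms for $s\geq k+2$ and surjective at $s=k+1$, so $\pi_{k}(\mathfrak{K}^{k+1})$ is a quotient of some stable value; they give no mechanism to push that stable value to zero. The appeal to Hausdorffness, $\bigcap_{t}\mathfrak{K}_{n}^{t}=0$, does not supply one: homotopy does not commute with inverse limits along a descending tower of inclusions, there is no Milnor $\varprojlim^{1}$ sequence available (the transition maps on the underlying modules are injections, not surjections), and a nonzero stable class is represented at stage $s$ by a cycle in $\mathfrak{K}_{k}^{s}$ that need not be, and in general cannot be, chosen compatibly across $s$ so as to produce an element of $\bigcap_{t}\mathfrak{K}_{k}^{t}$. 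Appealing to the spectral sequence of Section~2 for its convergence is circular, since the Connectivity Lemma is exactly the input Corollary~\ref{02.3} requires. The paper closes this gap by a finite-stage mechanism instead of a limit: in the inductive step from $s-1$ to $s$ it resolves $\mathfrak{H}\otimes_{A\langle X\rangle}\mathfrak{H}^{s-1}$ via the bar construction $\Barr_{A}(\mathfrak{H},A\langle X\rangle,\mathfrak{H}^{s-1})$, shows the maps $\Tor_{i}^{A\langle X\rangle_{n}}(A,\kappa_{s})$ vanish using the bigraded Koszul complex $K^{A}(L,M)$, and then feeds everything into the natural short exact sequence $0\to\Tor_{1}^{A\langle X\rangle}(A,\mathfrak{H}^{s-1})\to\mathfrak{H}\otimes_{A\langle X\rangle}\mathfrak{H}^{s-1}\to\mathfrak{H}^{s}\to 0$, which lets the induction climb from $\mathfrak{H}^{s-1}$ directly to $\mathfrak{H}^{s}$. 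You would need to supply something equivalent to that upward-pointing exact sequence; the Hausdorff filtration by itself cannot do the work.
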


\begin{proof}
Let $\upsilon_{A}:R\rightarrow A$ be the structure map of $A$, and $\xi:R\langle X\rangle \otimes_{R}A \rightarrow A\langle X\rangle$ the isomorphism of simplicial $A$-algebras. If for any $n\geq 0$, we set $R\langle X \rangle_{n}= R[X_n{}]$ where $X_{n}= \left\{X_{n,\alpha} \suchthat \alpha\in \Phi_{n}\right\}$, then we see that $\xi_{n}(p\otimes a)= \sum_{i_{1},...,i_{k}\geq 0}a\upsilon_{A}(r_{i_{1}...i_{k}})X_{n,\alpha_{1}}^{i_{1}}\cdots X_{n,\alpha_{k}}^{i_{k}}$ for every $p= p\left(X_{n,\alpha_{1}},\ldots, X_{n,\alpha_{k}}\right)= \sum_{i_{1},...,i_{k}\geq 0}r_{i_{1}...i_{k}}X_{n,\alpha_{1}}^{i_{1}}\cdots X_{n,\alpha_{k}}^{i_{k}}\in R\langle X \rangle_{n}$ and $a\in A$. Now set $a_{n,\alpha}=\rho_{n}\left(X_{n,\alpha}\right)\in A$ for every $n\geq 0$ and $\alpha\in \Phi_{n}$. For any $n\geq 0$, consider the evaluation map $\ev_{n}:A\langle X\rangle_{n}\rightarrow A$ given by $\ev_{n}\left(X_{n,\alpha}\right)=a_{n,\alpha}$ for every $\alpha\in \Phi_{n}$, and set $\mathfrak{H}_{n}= \Ker(\ev_{n})= \left\langle X_{n,\alpha}-a_{n,\alpha} \suchthat \alpha\in \Phi_{n} \right\rangle$. Then we have the following commutative diagram of simplicial $A$-modules with exact rows:
\begin{equation*}
  \begin{tikzcd}
  0 \arrow{r} & \mathfrak{K} \arrow{r} \arrow{d}{\cong} & R\langle X\rangle \otimes_{R}A \arrow{r}{\varphi} \arrow{d}{\cong}[swap]{\xi} & A \arrow{r} \ar[equal]{d} & 0
  \\
  0 \arrow{r} & \mathfrak{H} \arrow{r} & A\langle X\rangle \arrow{r}{\ev} & A \arrow{r} & 0
\end{tikzcd}
\end{equation*}
The isomorphism $\mathfrak{K} \cong \mathfrak{H}$ is induced by $\xi$, so we notice that $\mathfrak{K}^{s} \cong \mathfrak{H}^{s}$ for every $s\geq 0$, whence $\pi_{k}(\mathfrak{K}^{s}) \cong \pi_{k}(\mathfrak{H}^{s})$ for every $s,k\geq 0$. Thus it suffices to show that $\pi_{k}(\mathfrak{H}^{s})=0$ for every $s\geq 0$ and $k\leq s-1$.

Let $n\geq 0$. Since $\mathfrak{H}_{n}= \left\langle X_{n,\alpha}-a_{n,\alpha} \suchthat \alpha\in \Phi_{n} \right\rangle$, and any finitely many distinct elements of these generators form a regular sequence on $A[X_{n}]$, Lemma \ref{03.2} implies that $\frac{\mathfrak{H}_{n}^{s}}{\mathfrak{H}_{n}^{s+1}}$ is a free $\frac{A[X_{n}]}{\mathfrak{H}_{n}}$-module for every $s\geq 1$. But $\frac{A[X_{n}]}{\mathfrak{H}_{n}}\cong A$, so $\frac{\mathfrak{H}_{n}^{s}}{\mathfrak{H}_{n}^{s+1}}$ is a free $A$-module for every $s\geq 1$. Moreover, by Lemma \ref{03.2}, we have $\Sym_{A}^{s}\left(\frac{\mathfrak{H}_{n}}{\mathfrak{H}_{n}^{2}}\right)\cong \Sym_{\frac{A[X_{n}]}{\mathfrak{H}_{n}}}^{s}\left(\frac{\mathfrak{H}_{n}}{\mathfrak{H}_{n}^{2}}\right)\cong \frac{\mathfrak{H}_{n}^{s}}{\mathfrak{H}_{n}^{s+1}}$ for every $s\geq 1$. Also, since $\frac{\mathfrak{H}_{n}}{\mathfrak{H}_{n}^{2}}$ is a free $A$-module with a basis $\left\{X_{n,\alpha}-a_{n,\alpha}+ \mathfrak{H}_{n}^{2} \suchthat \alpha\in \Phi_{n}\right\}$, we infer that $\Sym_{A}\left(\frac{\mathfrak{H}_{n}}{\mathfrak{H}_{n}^{2}}\right)\cong A[X_{n}]= A\langle X \rangle_{n}$. The short exact sequence
$$0\rightarrow \mathfrak{H}_{n}\rightarrow A\langle X \rangle_{n}\rightarrow A\rightarrow 0$$
of $A$-modules is split, so $A\langle X \rangle_{n} \cong \mathfrak{H}_{n}\oplus A$, implying that $\mathfrak{H}_{n}$ is a projective $A$-module. Furthermore, the short exact sequence
$$\textstyle 0\rightarrow \mathfrak{H}_{n}^{2}\rightarrow \mathfrak{H}_{n}\rightarrow \frac{\mathfrak{H}_{n}}{\mathfrak{H}_{n}^{2}}\rightarrow 0$$
of $A$-modules is split, so $\mathfrak{H}_{n} \cong \mathfrak{H}_{n}^{2}\oplus \frac{\mathfrak{H}_{n}}{\mathfrak{H}_{n}^{2}}$, whence $\mathfrak{H}_{n}^{2}$ is a projective $A$-module. Continuing in this way, we see that for any $s\geq 1$, the short exact sequence
$$\textstyle 0\rightarrow \mathfrak{H}_{n}^{s+1}\rightarrow \mathfrak{H}_{n}^{s}\rightarrow \frac{\mathfrak{H}_{n}^{s}}{\mathfrak{H}_{n}^{s+1}}\rightarrow 0$$
of $A$-modules is split, implying that $\mathfrak{H}_{n}^{s}$ is a projective $A$-module.

We now argue by induction on $s$ to show that $\pi_{k}(\mathfrak{H}^{s})=0$ for every $s\geq 0$ and $k\leq s-1$. The result is clear for $s=0$. Let $s=1$. Considering the differential $\partial_{1}^{\Nor_{R}\left(R\langle X\rangle\right)}:\Nor_{R}\left(R\langle X\rangle\right)_{1} \rightarrow \Nor_{R}\left(R\langle X\rangle\right)_{0}$ and the quotient map $\varpi:\Nor_{R}\left(R\langle X\rangle\right)_{0} \rightarrow \Coker\left(\partial_{1}^{\Nor_{R}\left(R\langle X\rangle\right)}\right)$, the induced $A$-homomorphism
$$\pi_{0}\left(R\langle X\rangle\otimes_{R}A\right) = \Coker\left(\partial_{1}^{\Nor_{R}\left(R\langle X\rangle\right)}\otimes_{R}A\right) \xrightarrow{\overline{\varpi\otimes_{R}A}} \Coker\left(\partial_{1}^{\Nor_{R}\left(R\langle X\rangle\right)}\right)\otimes_{R}A = \pi_{0}\left(R\langle X\rangle\right)\otimes_{R}A$$
is an isomorphism. Now consider the following commutative diagram:
\begin{equation*}
  \begin{tikzcd}[column sep=4.5em,row sep=2em]
  \pi_{0}\left(R\langle X\rangle\otimes_{R}A\right) \arrow{r}{\pi_{0}(\varphi)} \arrow{d}{\cong}[swap]{\overline{\varpi\otimes_{R}A}} & A
  \\
  \pi_{0}\left(R\langle X\rangle\right)\otimes_{R}A \arrow{r}{\pi_{0}(\rho)\otimes_{R}A} & A\otimes_{R}A \arrow{u}{\cong}[swap]{\mu_{A}}
\end{tikzcd}
\end{equation*}
Since $\rho$ is a cofibrant replacement, it is a weak equivalence, so $\Nor_{R}(\rho)$ is a quasi-isomorphism; see \cite[Theorem 4.17]{GS} or \cite[Theorem 6.6]{Fa}. In particular, $\pi_{0}(\rho)= H_{0}\left(\Nor_{R}(\rho)\right)$ is an isomorphism, so $\pi_{0}(\rho)\otimes_{R}A$ is an isomorphism. Thus the above diagram shows that $\pi_{0}(\varphi)$ is an isomorphism. Now in light of Remark \ref{03.1}, the short exact sequence
$$0\rightarrow \mathfrak{K}\rightarrow R\langle X\rangle\otimes_{R}A \xrightarrow{\varphi} A \rightarrow 0$$
yields the following exact sequence:
$$0=\pi_{1}(A) \rightarrow \pi_{0}\left(\mathfrak{K}\right) \rightarrow \pi_{0}\left(R\langle X\rangle\otimes_{R}A\right) \xrightarrow{\pi_{0}(\varphi)} \pi_{0}(A)=A \rightarrow 0$$
As $\pi_{0}(\varphi)$ is an isomorphism, we conclude that $\pi_{0}\left(\mathfrak{H}\right)\cong \pi_{0}\left(\mathfrak{K}\right)=0$.

Before moving on to the inductive step, we need to prove a few facts. Firstly, we show that if $n\geq 0$, $s\geq 1$, and $\kappa_{s}:\frac{A\langle X\rangle_{n}}{\mathfrak{H}_{n}^{s}} \rightarrow \frac{A\langle X\rangle_{n}}{\mathfrak{H}_{n}^{s-1}}$ is the natural $A$-epimorphism, then
$$\textstyle \Tor_{i}^{A\langle X\rangle_{n}}(A,\kappa_{s}):\Tor_{i}^{A\langle X\rangle_{n}}\left(A,\frac{A\langle X\rangle_{n}}{\mathfrak{H}_{n}^{s}}\right) \rightarrow \Tor_{i}^{A\langle X\rangle_{n}}\left(A,\frac{A\langle X\rangle_{n}}{\mathfrak{H}_{n}^{s-1}}\right)$$
is the zero map for every $i\geq 1$. Let $L$ be a free $A$-module, and $M$ a $\Sym_{A}(L)$-module. We construct a connective $A$-complex $K^{A}(L,M)$ by setting $K^{A}(L,M)_{i}=\bigwedge_{A}^{i}L\otimes_{A}M$ for every $i\geq 0$, and defining $\partial_{i}^{K^{A}(L,M)}:K^{A}(L,M)_{i}\rightarrow K^{A}(L,M)_{i-1}$ by
$$\partial_{i}^{K^{A}(L,M)}\left(\left(x_{1}\wedge \cdots \wedge x_{i}\right)\otimes y\right)= \sum_{k=1}^{i}(-1)^{k+1}\left(x_{1}\wedge \cdots \wedge \widehat{x_{k}} \wedge \cdots \wedge x_{i}\right)\otimes x_{k}y$$
for every $i\geq 1$, $x_{1},...,x_{i}\in L$, and $y\in M$. It is easily verified that $K^{A}(L,M)$ is a connective $A$-complex. Now we consider the special case $K^{A}\left(L,\Sym_{A}(L)\right)$. Since $L$ is a free $A$-module, $\bigwedge_{A}^{i}L$ is a free $A$-module for every $i\geq 0$, so $K^{A}\left(L,\Sym_{A}(L)\right)_{i}= \bigwedge_{A}^{i}L\otimes_{A}\Sym_{A}(L)$ is a free $\Sym_{A}(L)$-module for every $i\geq 0$. Considering the $A$-homomorphism
$$L\otimes_{A}\Sym_{A}(L) =K^{A}\left(L,\Sym_{A}(L)\right)_{1} \xrightarrow{\partial_{1}^{K^{A}(L,M)}} K^{A}\left(L,\Sym_{A}(L)\right)_{0} = A\otimes_{A}\Sym_{A}(L)$$
where $\partial_{1}^{K^{A}(L,M)}(x\otimes y)=1\otimes(x\ast y)$ for every $x\in L$ and $y\in \Sym_{A}(L)$, we see that:
\begin{equation*}
\begin{split}
 H_{0}\left(K^{A}\left(L,\Sym_{A}(L)\right)\right) & = \textstyle \frac{K^{A}\left(L,\Sym_{A}(L)\right)_{0}}{\im\left(\partial_{1}^{K^{A}\left(L,\Sym_{A}(L)\right)}\right)}
 = \frac{A\otimes_{A}\Sym_{A}(L)}{A\otimes_{A}L \Sym_{A}(L)} \cong \frac{\Sym_{A}(L)}{L \Sym_{A}(L)} = \frac{A\oplus L\oplus \Sym_{A}^{2}(L)\oplus \cdots}{L\left(A\oplus L\oplus \Sym_{A}^{2}(L)\oplus \cdots \right)} \\
 & = \textstyle \frac{A\oplus L\oplus \Sym_{A}^{2}(L)\oplus \cdots}{L\oplus \Sym_{A}^{2}(L)\oplus \cdots}\cong A \\
\end{split}
\end{equation*}
We next show that $H_{i}\left(K^{A}\left(L,\Sym_{A}(L)\right)\right)=0$ for every $i\geq 1$. First assume that $L=A^{m}$ for some $m\geq 1$. We argue by induction on $m$ to show that $H_{i}\left(K^{A}\left(A^{m},\Sym_{A}(A^{m})\right)\right)=0$ for every $i\geq 1$. Let $m=1$. Then $K^{A}\left(A,\Sym_{A}(A)\right)$ is as follows:
$$0 \rightarrow A\otimes_{A}\Sym_{A}(A) \xrightarrow{\partial_{1}^{K^{A}\left(A,\Sym_{A}(A)\right)}} A\otimes_{A}\Sym_{A}(A) \rightarrow 0$$
We note that $\partial_{1}^{K^{A}\left(A,\Sym_{A}(A)\right)}(a\otimes y)= 1\otimes ay= a\otimes y$ for every $a\in A$ and $y\in \Sym_{A}(A)$, so it follows that $\partial_{1}^{K^{A}\left(A,\Sym_{A}(A)\right)}=1^{A\otimes_{A}\Sym_{A}(A)}$, whence $H_{1}\left(K^{A}\left(A,\Sym_{A}(A)\right)\right)=0$. Therefore, $H_{i}\left(K^{A}\left(A,\Sym_{A}(A)\right)\right)=0$ for every $i\geq 1$. Now suppose that $m\geq 2$, and the result holds for $m-1$. Let $x\in A^{m}$, and define an $A$-homomorphism $s_{i}:K^{A}\left(A^{m},\Sym_{A}(A^{m})\right)_{i} \rightarrow K^{A}\left(A^{m},\Sym_{A}(A^{m})\right)_{i+1}$ by setting $s_{i}=0$ for every $i\leq -1$, $s_{0}(1\otimes y)=x\otimes y$, and $s_{i}\left((x_{1}\wedge \cdots \wedge x_{i})\otimes y\right)= (x\wedge x_{1}\wedge \cdots \wedge x_{i})\otimes y$ for every $i\geq 1$, $x_{1},...,x_{i} \in A^{m}$, and $y\in \Sym_{A}(A^{m})$. Then one can see by inspection that
$$\partial_{i+1}^{K^{A}\left(A^{m},\Sym_{A}(A^{m})\right)}s_{i} + s_{i-1}\partial_{i}^{K^{A}\left(A^{m},\Sym_{A}(A^{m})\right)} = x1^{K^{A}\left(A^{m},\Sym_{A}(A^{m})\right)_{i}}$$
for every $i\in \mathbb{Z}$, i.e. $x1^{K^{A}\left(A^{m},\Sym_{A}(A^{m})\right)} \sim 0$. As a result, for any $i\geq 0$, we have $x1^{H_{i}\left(K^{A}\left(A^{m},\Sym_{A}(A^{m})\right)\right)}= H_{i}\left(x1^{K^{A}\left(A^{m},\Sym_{A}(A^{m})\right)}\right) = H_{i}(0) = 0$, so $xH_{i}\left(K^{A}\left(A^{m},\Sym_{A}(A^{m})\right)\right) = 0$. Now specialize to $x=(0,...,0,1)\in A^{m}$, and consider the following short exact sequence of $A$-modules:
$$\textstyle 0 \rightarrow \Sym_{A}(A^{m}) \xrightarrow{x} \Sym_{A}(A^{m}) \rightarrow \frac{\Sym_{A}(A^{m})}{x \Sym_{A}(A^{m})} \rightarrow 0$$
Then we get the following short exact sequence of $A$-modules:
$$\textstyle 0 \rightarrow \bigwedge_{A}^{i}A^{m}\otimes_{A} \Sym_{A}(A^{m}) \xrightarrow{x} \bigwedge_{A}^{i}A^{m}\otimes_{A} \Sym_{A}(A^{m}) \rightarrow \bigwedge_{A}^{i}A^{m}\otimes_{A} \frac{\Sym_{A}(A^{m})}{x \Sym_{A}(A^{m})} \rightarrow 0$$
We further note that:
$$\textstyle \frac{\Sym_{A}(A^{m})}{x \Sym_{A}(A^{m})} \cong \frac{A[Z_{1},...,Z_{m}]}{\langle Z_{m} \rangle} \cong A[Z_{1},...,Z_{m-1}] \cong \Sym_{A}(A^{m-1})$$
and
\begin{equation*}
\begin{split}
\textstyle \bigwedge_{A}^{i}A^{m} & \cong \textstyle \bigwedge_{A}^{i}\left(A^{m-1}\oplus A\right) \cong \bigoplus_{u+v=i}\left(\bigwedge_{A}^{u}A^{m-1} \otimes_{A} \bigwedge_{A}^{v}A\right) \cong \left(\bigwedge_{A}^{i}A^{m-1}\otimes_{A}A\right) \oplus \left(\bigwedge_{A}^{i-1}A^{m-1}\otimes_{A}A\right) \\
& \textstyle \cong \bigwedge_{A}^{i}A^{m-1} \oplus \bigwedge_{A}^{i-1}A^{m-1}
\end{split}
\end{equation*}
Thus we get:
\begin{equation*}
\begin{split}
\textstyle \bigwedge_{A}^{i}A^{m}\otimes_{A}\frac{\Sym_{A}(A^{m})}{x \Sym_{A}(A^{m})} & \textstyle \cong \left(\bigwedge_{A}^{i}A^{m-1} \oplus \textstyle \bigwedge_{A}^{i-1}A^{m-1}\right)\otimes_{A}\Sym_{A}(A^{m-1}) \\
& \textstyle \cong \left(\bigwedge_{A}^{i}A^{m-1}\otimes_{A}\Sym_{A}(A^{m-1})\right) \oplus \left(\textstyle \bigwedge_{A}^{i-1}A^{m-1}\otimes_{A}\Sym_{A}(A^{m-1})\right) \\
& = K^{A}\left(A^{m-1},\Sym_{A}(A^{m-1})\right)_{i} \oplus K^{A}\left(A^{m-1},\Sym_{A}(A^{m-1})\right)_{i-1}
\end{split}
\end{equation*}
Hence we obtain the following short exact sequence of $A$-modules for every $i\geq 0$:
\footnotesize
$$0 \rightarrow K^{A}\left(A^{m},\Sym_{A}(A^{m})\right)_{i} \xrightarrow{x} K^{A}\left(A^{m},\Sym_{A}(A^{m})\right)_{i} \rightarrow K^{A}\left(A^{m-1},\Sym_{A}(A^{m-1})\right)_{i} \oplus K^{A}\left(A^{m-1},\Sym_{A}(A^{m-1})\right)_{i-1} \rightarrow 0$$
\normalsize
It can be easily checked that the homomorphisms in the above sequence are compatible with the differentials, so we get the following short exact sequence of $A$-complexes:
\small
$$0 \rightarrow K^{A}\left(A^{m},\Sym_{A}(A^{m})\right) \xrightarrow{x} K^{A}\left(A^{m},\Sym_{A}(A^{m})\right) \rightarrow K^{A}\left(A^{m-1},\Sym_{A}(A^{m-1})\right) \oplus \Sigma K^{A}\left(A^{m-1},\Sym_{A}(A^{m-1})\right) \rightarrow 0$$
\normalsize
This gives the following exact sequence for every $i\geq 0$:
$$H_{i+1}\left(K^{A}\left(A^{m-1},\Sym_{A}(A^{m-1})\right)\right) \oplus H_{i}\left(K^{A}\left(A^{m-1},\Sym_{A}(A^{m-1})\right)\right) \rightarrow H_{i}\left(K^{A}\left(A^{m},\Sym_{A}(A^{m})\right)\right) $$$$ \xrightarrow{x} H_{i}\left(K^{A}\left(A^{m},\Sym_{A}(A^{m})\right)\right)$$
But $xH_{i}\left(K^{A}\left(A^{m},\Sym_{A}(A^{m})\right)\right) = 0$ for every $i\geq 0$, so we get the following exact sequence for every $i\geq 0$:
$$H_{i+1}\left(K^{A}\left(A^{m-1},\Sym_{A}(A^{m-1})\right)\right) \oplus H_{i}\left(K^{A}\left(A^{m-1},\Sym_{A}(A^{m-1})\right)\right) \rightarrow H_{i}\left(K^{A}\left(A^{m},\Sym_{A}(A^{m})\right)\right) \rightarrow 0$$
By the induction hypothesis, $H_{i}\left(K^{A}\left(A^{m-1},\Sym_{A}(A^{m-1})\right)\right) = 0$ for every $i\geq 1$, so the above exact sequence implies that $H_{i}\left(K^{A}\left(A^{m},\Sym_{A}(A^{m})\right)\right)= 0$ for every $i\geq 1$. Now suppose that $L$ is an arbitrary free $A$module. By Lazard's Theorem \cite[Theorem 5.40]{Ro}, one has $L \cong \underset{\alpha\in \Phi}{\varinjlim} L_{\alpha}$ where $\Phi$ is a filtered poset and $L_{\alpha}$ is a finitely generated free $A$-module for every $\alpha \in \Phi$. Using what was proved above, we have for every $i\geq 1$:
\begin{equation*}
\begin{split}
H_{i}\left(K^{A}\left(L,\Sym_{A}(L)\right)\right) & \cong H_{i}\left(K^{A}\left(\underset{\alpha\in \Phi}{\varinjlim} L_{\alpha},\Sym_{A}\left(\underset{\alpha\in \Phi}{\varinjlim} L_{\alpha}\right)\right)\right) \cong H_{i}\left(\underset{\alpha\in \Phi}{\varinjlim} K^{A}\left(L_{\alpha},\Sym_{A}(L_{\alpha})\right)\right) \\
& \cong \underset{\alpha\in \Phi}{\varinjlim} H_{i}\left(K^{A}\left(L_{\alpha},\Sym_{A}(L_{\alpha})\right)\right) = 0
\end{split}
\end{equation*}
All in all, we infer that $K^{A}\left(L,\Sym_{A}(L)\right)$ is a projective resolution of $A$ regarded as a $\Sym_{A}(L)$-module via the canonical projection $\Sym_{A}(L)\rightarrow A$. As a result, there is a natural isomorphism
$$\Tor_{i}^{\Sym_{A}(L)}(A,-) \cong H_{i}\left(K^{A}\left(L,\Sym_{A}(L)\right)\otimes_{\Sym_{A}(L)}-\right)$$
of functors for every $i\geq 0$. In particular, we have the following natural isomorphisms for every $i\geq 0$:
$$\Tor_{i}^{\Sym_{A}(L)}(A,M) \cong H_{i}\left(K^{A}\left(L,\Sym_{A}(L)\right)\otimes_{\Sym_{A}(L)}M\right) \cong H_{i}\left(K^{A}(L,M)\right)$$
Now suppose that $M=\bigoplus_{j=0}^{\infty} M_{j}$ where $M_{j}$ is an $A$-submodule of $M$ for every $j\geq 0$ with the property that $\Sym_{A}^{i}(L)M_{j} \subseteq M_{i+j}$ for every $i,j\geq 0$. Then we have for every $i\geq 0$:
$$\textstyle K^{A}(L,M)_{i} = \bigwedge_{A}^{i}L \otimes_{A}M = \bigwedge_{A}^{i}L \otimes_{A}\left(\bigoplus_{j=0}^{\infty}M_{j}\right) \cong \bigoplus_{j=0}^{\infty}\left(\bigwedge_{A}^{i}L\otimes_{A}M_{j}\right)$$
Setting $K^{A}(L,M)_{i,j}:= \bigwedge_{A}^{i}L \otimes_{A}M_{j}$ for every $i,j\geq 0$, we have
$$\textstyle K^{A}(L,M)_{i}\cong \bigoplus_{j=0}^{\infty}K^{A}(L,M)_{i,j}$$
for every $i\geq 0$. If $i\geq 1$, $x_{1},...,x_{i}\in L$, and $y\in M_{j}$ for some $j\geq 0$, then since $x_{k}\in L= \Sym_{A}^{1}(L)$ for every $1\leq k \leq i$, we have $x_{k}y\in M_{j+1}$, so we have:
\small
$$\textstyle \partial_{i}^{K^{A}(L,M)}\left(\left(x_{1}\wedge \cdots \wedge x_{i}\right)\otimes y\right)= \sum_{k=1}^{i}(-1)^{k+1}\left(x_{1}\wedge \cdots \wedge \widehat{x_{k}} \wedge \cdots \wedge x_{i}\right)\otimes x_{k}y \in \bigwedge_{A}^{i-1}L \otimes_{A}M_{j+1}= K^{A}(L,M)_{i-1,j+1}$$
\normalsize
It follows that $\partial_{i}^{K^{A}(L,M)}$ induces an $A$-homomorphism $\partial_{i,j}^{K^{A}(L,M)}:K^{A}(L,M)_{i,j}\rightarrow K^{A}(L,M)_{i-1,j+1}$ for every $i,j\geq 0$. Consider the following differentials for every $i,j\geq 0$:
$$K^{A}(L,M)_{i+1,j-1} \xrightarrow{\partial_{i+1,j-1}^{K^{A}(L,M)}} K^{A}(L,M)_{i,j} \xrightarrow{\partial_{i,j}^{K^{A}(L,M)}} K^{A}(L,M)_{i-1,j+1}$$
Setting $H_{i,j}\left(K^{A}(L,M)\right):= \frac{\Ker\left(\partial_{i,j}^{K^{A}(L,M)}\right)}{\im\left(\partial_{i+1,j-1}^{K^{A}(L,M)}\right)}$  for every $i,j\geq 0$, we see that
$$\textstyle H_{i}\left(K^{A}(L,M)\right)\cong \bigoplus_{j=0}^{\infty}H_{i,j}\left(K^{A}(L,M)\right)$$
for every $i\geq 0$. Moreover, it is clear that $H_{i,j}\left(K^{A}(L,M)\right)$ is completely determined by $M_{j-1}$, $M_{j}$, and $M_{j+1}$ for every $i,j\geq 0$. Now let $n\geq 0$, and consider the two $\Sym_{A}\left(\frac{\mathfrak{H}_{n}}{\mathfrak{H}_{n}^{2}}\right)$-modules $M=\Sym_{A}\left(\frac{\mathfrak{H}_{n}}{\mathfrak{H}_{n}^{2}}\right)= \bigoplus_{j=0}^{\infty}\Sym_{A}^{j}\left(\frac{\mathfrak{H}_{n}}{\mathfrak{H}_{n}^{2}}\right)$ and $M^{(s)}= \bigoplus_{j=0}^{s-1}\Sym_{A}^{j}\left(\frac{\mathfrak{H}_{n}}{\mathfrak{H}_{n}^{2}}\right)$ for every $s\geq 1$. Let $s\geq 1$. As we just observed, $H_{i,j}\left(K^{A} \left(\frac{\mathfrak{H}_{n}}{\mathfrak{H}_{n}^{2}},M\right)\right)$ is completely determined by $M_{j-1}$, $M_{j}$, and $M_{j+1}$, and $H_{i,j}\left(K^{A} \left(\frac{\mathfrak{H}_{n}}{\mathfrak{H}_{n}^{2}},M^{(s)}\right)\right)$ is completely determined by $M_{j-1}^{(s)}$, $M_{j}^{(s)}$, and $M_{j+1}^{(s)}$. However, $M_{j}^{(s)}= \Sym_{A}^{j}\left(\frac{\mathfrak{H}_{n}}{\mathfrak{H}_{n}^{2}}\right)= M_{j}$ for every $j<s$. Thus we deduce that $H_{i,j}\left(K^{A} \left(\frac{\mathfrak{H}_{n}}{\mathfrak{H}_{n}^{2}},M^{(s)}\right)\right)= H_{i,j}\left(K^{A} \left(\frac{\mathfrak{H}_{n}}{\mathfrak{H}_{n}^{2}},M\right)\right)$ if $j+1<s$, i.e. $j<s-1$. But we have
$$\textstyle 0= \Tor_{i}^{\Sym_{A}\left(\frac{\mathfrak{H}_{n}}{\mathfrak{H}_{n}^{2}}\right)}(A,M)\cong H_{i}\left(K^{A}\left(\frac{\mathfrak{H}_{n}}{\mathfrak{H}_{n}^{2}},M\right)\right)\cong \bigoplus_{j=0}^{\infty}H_{i,j}\left(K^{A}\left(\frac{\mathfrak{H}_{n}}{\mathfrak{H}_{n}^{2}},M\right)\right)$$
for every $i\geq 1$, so we infer that $H_{i,j}\left(K^{A}\left(\frac{\mathfrak{H}_{n}}{\mathfrak{H}_{n}^{2}},M\right)\right)=0$ for every $i\geq 1$ and $j\geq 0$. Hence $H_{i,j}\left(K^{A}\left(\frac{\mathfrak{H}_{n}}{\mathfrak{H}_{n}^{2}},M^{(s)}\right)\right)=0$ for every $i\geq 1$ and $0\leq j< s-1$. On the other hand, $M_{j}^{(s)}=0$ for every $j>s-1$, so $H_{i,j}\left(K^{A}\left(\frac{\mathfrak{H}_{n}}{\mathfrak{H}_{n}^{2}},M^{(s)}\right)\right)=0$ for every $i\geq 0$ and $j>s-1$. It follows that
$$\textstyle H_{i}\left(K^{A}\left(\frac{\mathfrak{H}_{n}}{\mathfrak{H}_{n}^{2}},M^{(s)}\right)\right) \cong H_{i,s-1}\left(K^{A}\left(\frac{\mathfrak{H}_{n}}{\mathfrak{H}_{n}^{2}},M^{(s)}\right)\right)$$
for every $i\geq 1$. We observed above that:
$$\textstyle A\langle X\rangle_{n} \cong \Sym_{A}\left(\frac{\mathfrak{H}_{n}}{\mathfrak{H}_{n}^{2}}\right) = \bigoplus_{j=0}^{\infty}\Sym_{A}^{j}\left(\frac{\mathfrak{H}_{n}}{\mathfrak{H}_{n}^{2}}\right) \cong \bigoplus_{j=0}^{\infty}\frac{\mathfrak{H}_{n}^{j}}{\mathfrak{H}_{n}^{j+1}}$$
When we form $\frac{A\langle X\rangle_{n}}{\mathfrak{H}_{n}^{s}}$, we kill polynomials of degrees greater than or equal to $s$, so we notice that $\frac{A\langle X\rangle_{n}}{\mathfrak{H}_{n}^{s}}\cong \bigoplus_{j=0}^{s-1}\Sym_{A}^{j}\left(\frac{\mathfrak{H}_{n}}{\mathfrak{H}_{n}^{2}}\right) = M^{(s)}$. Let $\lambda_{s}:M^{(s)}\rightarrow M^{(s-1)}$ be the canonical projection. Then we have the following commutative diagram:
\begin{equation*}
  \begin{tikzcd}
  \frac{A\langle X\rangle_{n}}{\mathfrak{H}_{n}^{s}} \arrow{r}{\kappa_{s}} \arrow{d}[swap]{\cong} & \frac{A\langle X\rangle_{n}}{\mathfrak{H}_{n}^{s-1}} \arrow{d}{\cong}
  \\
  M^{(s)} \arrow{r}{\lambda_{s}} & M^{(s-1)}
\end{tikzcd}
\end{equation*}
This gives the following commutative diagram for every $i\geq 1$:
\begin{equation*}
  \begin{tikzcd}[column sep=9em,row sep=2em]
  \Tor_{i}^{A\langle X\rangle_{n}}\left(A,\frac{A\langle X\rangle_{n}}{\mathfrak{H}_{n}^{s}}\right) \arrow{r}{\Tor_{i}^{A\langle X\rangle_{n}}\left(A,\kappa_{s}\right)} \arrow{d}[swap]{\cong} & \Tor_{i}^{A\langle X\rangle_{n}}\left(A,\frac{A\langle X\rangle_{n}}{\mathfrak{H}_{n}^{s-1}}\right) \arrow{d}{\cong}
  \\
  \Tor_{i}^{\Sym_{A}\left(\frac{\mathfrak{H}_{n}}{\mathfrak{H}_{n}^{2}}\right)}\left(A,M^{(s)}\right) \arrow{r}{\Tor_{i}^{\Sym_{A}\left(\frac{\mathfrak{H}_{n}}{\mathfrak{H}_{n}^{2}}\right)}\left(A,\lambda_{s}\right)} \arrow{d}[swap]{\cong} & \Tor_{i}^{\Sym_{A}\left(\frac{\mathfrak{H}_{n}}{\mathfrak{H}_{n}^{2}}\right)}\left(A,M^{(s-1)}\right) \arrow{d}{\cong}
  \\
  H_{i}\left(K^{A}\left(\frac{\mathfrak{H}_{n}}{\mathfrak{H}_{n}^{2}},M^{(s)}\right)\right) \arrow{r}{H_{i}\left(K^{A}\left(\frac{\mathfrak{H}_{n}}{\mathfrak{H}_{n}^{2}},\lambda_{s}\right)\right)} & H_{i}\left(K^{A}\left(\frac{\mathfrak{H}_{n}}{\mathfrak{H}_{n}^{2}},M^{(s-1)}\right)\right)
\end{tikzcd}
\end{equation*}
Let $i\geq 1$. We note that $H_{i}\left(K^{A}\left(\frac{\mathfrak{H}_{n}}{\mathfrak{H}_{n}^{2}},\lambda_{s}\right)\right)$ is induced by $\lambda_{s}$. But we saw above that $H_{i}\left(K^{A}\left(\frac{\mathfrak{H}_{n}}{\mathfrak{H}_{n}^{2}},M^{(s)}\right)\right)$ has no component in degrees less than $s-1$, so we must have $H_{i}\left(K^{A}\left(\frac{\mathfrak{H}_{n}}{\mathfrak{H}_{n}^{2}},\lambda_{s}\right)\right)=0$. Therefore, the above diagram implies that $\Tor_{i}^{A\langle X\rangle_{n}}\left(A,\kappa_{s}\right)=0$.

Secondly, we show that $\pi_{k}\left(\Tor_{i}^{A\langle X\rangle}\left(A,\frac{A\langle X\rangle}{\mathfrak{H}^{s}}\right)\right)=0$ for every $i,s\geq 1$ and $0\leq k\leq s-1$. Let $n\geq 0$ and $i,s\geq 1$. We argue by induction on $k$. Let $k=0$. Since $\mathfrak{H}_{n}$ is a projective, hence flat $A$-module and $\pi_{0}(\mathfrak{H})=0$, we infer from Corollary \ref{03.4} that $\pi_{0}\left(\mathfrak{H}\otimes_{A}\mathfrak{H}^{j-1}\right)\cong \pi_{0}(\mathfrak{H})\otimes_{A}\pi_{0}\left(\mathfrak{H}^{j-1}\right)=0$ for every $j\geq 1$. For any $j\geq 1$, the natural $A$-epimorphism $\theta_{n}:\mathfrak{H}_{n}\otimes_{A}\mathfrak{H}_{n}^{j-1}\rightarrow \mathfrak{H}_{n}^{j}$, given by $\theta_{n}(p\otimes q)=pq$ for every $p\in \mathfrak{H}_{n}$ and $q\in \mathfrak{H}_{n}^{j-1}$, provides the following natural short exact sequence of $A$-modules:
$$0\rightarrow \Ker(\theta_{n})\rightarrow \mathfrak{H}_{n}\otimes_{A}\mathfrak{H}_{n}^{j-1}\xrightarrow{\theta_{n}} \mathfrak{H}_{n}^{j} \rightarrow 0$$
Since this short exact sequence is natural, we obtain the following short exact sequence of simplicial $A$-modules for every $j\geq 1$:
$$0\rightarrow \Ker(\theta) \rightarrow \mathfrak{H}\otimes_{A}\mathfrak{H}^{j-1} \rightarrow \mathfrak{H}^{j} \rightarrow 0$$
Thus we get the exact sequence
$$0= \pi_{0}\left(\mathfrak{H}\otimes_{A}\mathfrak{H}^{j-1}\right) \rightarrow \pi_{0}\left(\mathfrak{H}^{j}\right) \rightarrow 0,$$
implying that $\pi_{0}\left(\mathfrak{H}^{j}\right)=0$ for every $j\geq 1$. For any $j\geq 1$, the short exact sequence
$$\textstyle 0 \rightarrow \mathfrak{H}^{j+1} \rightarrow \mathfrak{H}^{j} \rightarrow \frac{\mathfrak{H}^{j}}{\mathfrak{H}^{j+1}} \rightarrow 0$$
of simplicial $A$-modules gives the exact sequence
$$\textstyle 0= \pi_{0}\left(\mathfrak{H}^{j}\right) \rightarrow \pi_{0}\left(\frac{\mathfrak{H}^{j}}{\mathfrak{H}^{j+1}}\right) \rightarrow 0,$$
so $\pi_{0}\left(\frac{\mathfrak{H}^{j}}{\mathfrak{H}^{j+1}}\right)=0$. We note that by Remark \ref{03.1}, the functor $\pi_{0}$ has a right adjoint, so it preserves direct limits, hence in particular, it preserves direct sums. Therefore, we get:
$$\textstyle \pi_{0}\left(\frac{A\langle X\rangle}{\mathfrak{H}^{s}}\right) \cong \pi_{0}\left(\bigoplus_{j=0}^{s-1}\frac{\mathfrak{H}^{j}}{\mathfrak{H}^{j+1}}\right) \cong \bigoplus_{j=0}^{s-1}\pi_{0}\left(\frac{\mathfrak{H}^{j}}{\mathfrak{H}^{j+1}}\right)=0$$
We observe that $\frac{A\langle X\rangle_{n}}{\mathfrak{H}_{n}^{s}} \cong \bigoplus_{j=0}^{s-1}\Sym_{A}^{j}\left(\frac{\mathfrak{H}_{n}}{\mathfrak{H}_{n}^{2}}\right) \cong \bigoplus_{j=0}^{s-1}\frac{\mathfrak{H}^{j}}{\mathfrak{H}^{j+1}}$ is a free, hence flat $A$-module, so we have the following natural $A$-isomorphisms:
$$\textstyle \Tor_{i}^{A\langle X\rangle_{n}}\left(A,\frac{A\langle X\rangle_{n}}{\mathfrak{H}_{n}^{s}}\right) \cong \Tor_{i}^{A\langle X\rangle_{n}}\left(A,A\otimes_{A}\frac{A\langle X\rangle_{n}}{\mathfrak{H}_{n}^{s}}\right) \cong \Tor_{i}^{A\langle X\rangle_{n}}(A,A)\otimes_{A}\frac{A\langle X\rangle_{n}}{\mathfrak{H}_{n}^{s}}$$
Since these isomorphisms are natural, we get the isomorphism
$$\textstyle \Tor_{i}^{A\langle X\rangle}\left(A,\frac{A\langle X\rangle}{\mathfrak{H}^{s}}\right) \cong \Tor_{i}^{A\langle X\rangle}(A,A)\otimes_{A}\frac{A\langle X\rangle}{\mathfrak{H}^{s}}$$
of simplicial $A$-modules. Then by using Corollary \ref{03.4}, we get:
$$\textstyle \pi_{0}\left(\Tor_{i}^{A\langle X\rangle}\left(A,\frac{A\langle X\rangle}{\mathfrak{H}^{s}}\right)\right) \cong \pi_{0}\left(\Tor_{i}^{A\langle X\rangle}(A,A)\otimes_{A}\frac{A\langle X\rangle}{\mathfrak{H}^{s}}\right) \cong \pi_{0}\left(\Tor_{i}^{A\langle X\rangle}(A,A)\right) \otimes_{A} \pi_{0}\left(\frac{A\langle X\rangle}{\mathfrak{H}^{s}}\right) = 0$$
Now suppose that $k\geq 1$, and the result holds for less than $k$. As $\frac{\mathfrak{H}_{n}^{s-1}}{\mathfrak{H}_{n}^{s}}$ is a free, hence flat $A$-module, and $\frac{A\langle X\rangle_{n}}{\mathfrak{H}_{n}} \cong A$, we have the following natural $A$-isomorphisms:
$$\textstyle \Tor_{i}^{A\langle X\rangle_{n}}\left(A,\frac{\mathfrak{H}_{n}^{s-1}}{\mathfrak{H}_{n}^{s}}\right) \cong \Tor_{i}^{A\langle X\rangle_{n}}\left(A,A\otimes_{A}\frac{\mathfrak{H}_{n}^{s-1}}{\mathfrak{H}_{n}^{s}}\right) \cong \Tor_{i}^{A\langle X\rangle_{n}}(A,A)\otimes_{A}\frac{\mathfrak{H}_{n}^{s-1}}{\mathfrak{H}_{n}^{s}} \cong \Tor_{i}^{A\langle X\rangle_{n}}\left(A,\frac{A\langle X\rangle_{n}}{\mathfrak{H}_{n}}\right)\otimes_{A}\frac{\mathfrak{H}_{n}^{s-1}}{\mathfrak{H}_{n}^{s}}$$
Thus we get the isomorphism
$$\textstyle \Tor_{i}^{A\langle X\rangle}\left(A,\frac{\mathfrak{H}^{s-1}}{\mathfrak{H}^{s}}\right) \cong \Tor_{i}^{A\langle X\rangle}\left(A,\frac{A\langle X\rangle}{\mathfrak{H}}\right)\otimes_{A}\frac{\mathfrak{H}^{s-1}}{\mathfrak{H}^{s}}$$
of simplicial $A$-modules. By the base case, we have $\pi_{0}\left(\Tor_{i}^{A\langle X\rangle}\left(A,\frac{A\langle X\rangle}{\mathfrak{H}}\right)\right)=0$, i.e. $\pi_{j}\left(\Tor_{i}^{A\langle X\rangle}\left(A,\frac{A\langle X\rangle}{\mathfrak{H}}\right)\right)=0$ for every $j\leq 0$. On the other hand, we have the following natural $A$-isomorphisms:
$$\textstyle \frac{\mathfrak{H}_{n}^{s-1}}{\mathfrak{H}_{n}^{s}} = \frac{\mathfrak{H}_{n}\cap \mathfrak{H}_{n}^{s-1}}{\mathfrak{H}_{n}\mathfrak{H}_{n}^{s-1}} \cong \Tor_{1}^{A\langle X\rangle_{n}}\left(\frac{A\langle X\rangle_{n}}{\mathfrak{H}_{n}},\frac{A\langle X\rangle_{n}}{\mathfrak{H}_{n}^{s-1}}\right) \cong \Tor_{1}^{A\langle X\rangle_{n}}\left(A,\frac{A\langle X\rangle_{n}}{\mathfrak{H}_{n}^{s-1}}\right)$$
Thus we get the following isomorphism of simplicial $A$-modules:
$$\textstyle \frac{\mathfrak{H}^{s-1}}{\mathfrak{H}^{s}} \cong \Tor_{1}^{A\langle X\rangle}\left(A,\frac{A\langle X\rangle}{\mathfrak{H}^{s-1}}\right)$$
By the induction hypothesis, $\pi_{j}\left(\frac{\mathfrak{H}^{s-1}}{\mathfrak{H}^{s}}\right) \cong \pi_{j}\left(\Tor_{1}^{A\langle X\rangle}\left(A,\frac{A\langle X\rangle}{\mathfrak{H}^{s-1}}\right)\right)=0$ for every $j\leq k-1$. Since $\frac{\mathfrak{H}_{n}}{\mathfrak{H}_{n}^{2}}$ is a free, hence flat $A$-module, it follows from Corollary \ref{03.5} that
$$\textstyle \pi_{j}\left(\Tor_{i}^{A\langle X\rangle}\left(A,\frac{\mathfrak{H}^{s-1}}{\mathfrak{H}^{s}}\right)\right) \cong \pi_{j}\left(\Tor_{i}^{A\langle X\rangle}\left(A,\frac{A\langle X\rangle}{\mathfrak{H}}\right)\otimes_{A}\frac{\mathfrak{H}^{s-1}}{\mathfrak{H}^{s}}\right)=0$$
for every $j\leq 0+k-1+1=k$. In particular, $\pi_{k}\left(\Tor_{i}^{A\langle X\rangle}\left(A,\frac{\mathfrak{H}^{s-1}}{\mathfrak{H}^{s}}\right)\right)=0$. Now the natural short exact sequence
$$\textstyle 0 \rightarrow \frac{\mathfrak{H}_{n}^{s-1}}{\mathfrak{H}_{n}^{s}} \rightarrow \frac{A\langle X\rangle_{n}}{\mathfrak{H}_{n}^{s}} \xrightarrow{\kappa_{s}} \frac{A\langle X\rangle_{n}}{\mathfrak{H}_{n}^{s-1}} \rightarrow 0$$
yields the following natural exact sequence:
$$\textstyle \Tor_{i+1}^{A\langle X\rangle_{n}}\left(A,\frac{A\langle X\rangle_{n}}{\mathfrak{H}_{n}^{s}}\right) \xrightarrow{\Tor_{i+1}^{A\langle X\rangle_{n}}\left(A,\kappa_{s}\right)} \Tor_{i+1}^{A\langle X\rangle_{n}}\left(A,\frac{A\langle X\rangle_{n}}{\mathfrak{H}_{n}^{s-1}}\right) \rightarrow \Tor_{i}^{A\langle X\rangle_{n}}\left(A,\frac{\mathfrak{H}_{n}^{s-1}}{\mathfrak{H}_{n}^{s}}\right) $$$$ \textstyle \rightarrow \Tor_{i}^{A\langle X\rangle_{n}}\left(A,\frac{A\langle X\rangle_{n}}{\mathfrak{H}_{n}^{s}}\right) \xrightarrow{\Tor_{i}^{A\langle X\rangle_{n}}\left(A,\kappa_{s}\right)} \Tor_{i}^{A\langle X\rangle_{n}}\left(A,\frac{A\langle X\rangle_{n}}{\mathfrak{H}_{n}^{s-1}}\right)$$
By the previous paragraph, $\Tor_{i+1}^{A\langle X\rangle_{n}}\left(A,\kappa_{s}\right) = 0 = \Tor_{i}^{A\langle X\rangle_{n}}\left(A,\kappa_{s}\right)$, so we get the following natural short exact sequence of $A$-modules:
$$\textstyle 0 \rightarrow \Tor_{i+1}^{A\langle X\rangle_{n}}\left(A,\frac{A\langle X\rangle_{n}}{\mathfrak{H}_{n}^{s-1}}\right) \rightarrow \Tor_{i}^{A\langle X\rangle_{n}}\left(A,\frac{\mathfrak{H}_{n}^{s-1}}{\mathfrak{H}_{n}^{s}}\right) \rightarrow \Tor_{i}^{A\langle X\rangle_{n}}\left(A,\frac{A\langle X\rangle_{n}}{\mathfrak{H}_{n}^{s}}\right) \rightarrow 0$$
Thus we obtain the following short exact sequence of simplicial $A$-modules:
$$\textstyle 0 \rightarrow \Tor_{i+1}^{A\langle X\rangle}\left(A,\frac{A\langle X\rangle}{\mathfrak{H}^{s-1}}\right) \rightarrow \Tor_{i}^{A\langle X\rangle}\left(A,\frac{\mathfrak{H}^{s-1}}{\mathfrak{H}^{s}}\right) \rightarrow \Tor_{i}^{A\langle X\rangle}\left(A,\frac{A\langle X\rangle}{\mathfrak{H}^{s}}\right) \rightarrow 0$$
Using the induction hypothesis, we get the following exact sequence:
$$\textstyle 0 = \pi_{k}\left(\Tor_{i}^{A\langle X\rangle}\left(A,\frac{\mathfrak{H}^{s-1}}{\mathfrak{H}^{s}}\right)\right) \rightarrow \pi_{k}\left(\Tor_{i}^{A\langle X\rangle}\left(A,\frac{A\langle X\rangle}{\mathfrak{H}^{s}}\right)\right) \rightarrow \pi_{k-1}\left(\Tor_{i+1}^{A\langle X\rangle}\left(A,\frac{A\langle X\rangle}{\mathfrak{H}^{s-1}}\right)\right) = 0$$
It follows that $\pi_{k}\left(\Tor_{i}^{A\langle X\rangle}\left(A,\frac{A\langle X\rangle}{\mathfrak{H}^{s}}\right)\right)=0$.

Thirdly, we show that $\pi_{k}\left(\Tor_{i}^{A\langle X\rangle}\left(\mathfrak{H},\mathfrak{H}^{s}\right)\right) = 0 = \pi_{k}\left(\Tor_{i}^{A\langle X\rangle}\left(A,\mathfrak{H}^{s}\right)\right)$ for every $i,s\geq 1$ and $0\leq k\leq s-1$. Let $n\geq 0$ and $i,s\geq 1$. The natural short exact sequence $$0 \rightarrow \mathfrak{H}_{n} \rightarrow A\langle X\rangle_{n} \xrightarrow{\ev_{n}} A \rightarrow 0$$
yields the following natural exact sequence:
$$0= \Tor_{i+1}^{A\langle X\rangle_{n}}\left(A\langle X\rangle_{n},\mathfrak{H}_{n}^{s}\right) \rightarrow \Tor_{i+1}^{A\langle X\rangle_{n}}\left(A,\mathfrak{H}_{n}^{s}\right) \rightarrow \Tor_{i}^{A\langle X\rangle_{n}}\left(\mathfrak{H}_{n},\mathfrak{H}_{n}^{s}\right) \rightarrow \Tor_{i}^{A\langle X\rangle_{n}}\left(A\langle X\rangle_{n},\mathfrak{H}_{n}^{s}\right) =0$$
Hence we get the natural $A$-isomorphism $\Tor_{i+1}^{A\langle X\rangle_{n}}\left(A,\mathfrak{H}_{n}^{s}\right) \cong \Tor_{i}^{A\langle X\rangle_{n}}\left(\mathfrak{H}_{n},\mathfrak{H}_{n}^{s}\right)$. On the other hand, the natural short exact sequence
$$\textstyle 0\rightarrow \mathfrak{H}_{n}^{s} \rightarrow A\langle X\rangle_{n} \rightarrow \frac{A\langle X\rangle_{n}}{\mathfrak{H}_{n}^{s}} \rightarrow 0$$
gives the following natural exact sequence:
$$\textstyle 0= \Tor_{i+1}^{A\langle X\rangle_{n}}\left(A,A\langle X\rangle_{n}\right) \rightarrow \Tor_{i+1}^{A\langle X\rangle_{n}}\left(A,\frac{A\langle X\rangle_{n}}{\mathfrak{H}_{n}^{s}}\right) \rightarrow \Tor_{i}^{A\langle X\rangle_{n}}\left(A,\mathfrak{H}_{n}^{s}\right) \rightarrow \Tor_{i}^{A\langle X\rangle_{n}}\left(A,A\langle X\rangle_{n}\right) =0$$
Thus we get the natural $A$-isomorphism $\Tor_{i+1}^{A\langle X\rangle_{n}}\left(A,\frac{A\langle X\rangle_{n}}{\mathfrak{H}_{n}^{s}}\right) \cong \Tor_{i}^{A\langle X\rangle_{n}}\left(A,\mathfrak{H}_{n}^{s}\right)$. As a result, we get the following natural $A$-isomorphisms:
$$\textstyle \Tor_{i}^{A\langle X\rangle_{n}}\left(\mathfrak{H}_{n},\mathfrak{H}_{n}^{s}\right) \cong \Tor_{i+1}^{A\langle X\rangle_{n}}\left(A,\mathfrak{H}_{n}^{s}\right) \cong \Tor_{i+2}^{A\langle X\rangle_{n}}\left(A,\frac{A\langle X\rangle_{n}}{\mathfrak{H}_{n}^{s}}\right)$$
Therefore, we obtain the isomorphisms $\Tor_{i}^{A\langle X\rangle}\left(\mathfrak{H},\mathfrak{H}^{s}\right) \cong \Tor_{i+2}^{A\langle X\rangle}\left(A,\frac{A\langle X\rangle}{\mathfrak{H}^{s}}\right)$ and $\Tor_{i}^{A\langle X\rangle}\left(A,\mathfrak{H}^{s}\right) \cong \Tor_{i+1}^{A\langle X\rangle}\left(A,\frac{A\langle X\rangle}{\mathfrak{H}^{s}}\right)$ of simplicial $A$-modules. Therefore, by the previous paragraph, we get $\pi_{k}\left(\Tor_{i}^{A\langle X\rangle}\left(\mathfrak{H},\mathfrak{H}^{s}\right)\right) \cong \pi_{k}\left(\Tor_{i+2}^{A\langle X\rangle}\left(A,\frac{A\langle X\rangle}{\mathfrak{H}^{s}}\right)\right)= 0$ and $\pi_{k}\left(\Tor_{i}^{A\langle X\rangle}\left(A,\mathfrak{H}^{s}\right)\right) \cong \pi_{k}\left(\Tor_{i+1}^{A\langle X\rangle}\left(A,\frac{A\langle X\rangle}{\mathfrak{H}^{s}}\right)\right)= 0$ for every $0\leq k\leq s-1$.

We are now ready to proceed with the inductive step. Let $s\geq 2$, and assume that the result holds for $s-1$, i.e. $\pi_{k}\left(\mathfrak{H}^{s-1}\right)=0$ for every $k\leq s-2$. Consider the bar construction $\Barr_{A}\left(\mathfrak{H}_{n},A\langle X\rangle_{n},\mathfrak{H}_{n}^{s-1}\right)$ for every $n\geq 0$. Let $n\geq 0$. It is clear that $A\langle X\rangle_{n} = A[X_{n}]$ is a free, hence flat $A$-algebra. Moreover, $\mathfrak{H}_{n}$ is a projective, hence flat $A$-module. As a result, by Remark \ref{03.1}, we have the following isomorphisms for every $i\geq 0$:
$$\Tor_{i}^{A\langle X\rangle_{n}}\left(\mathfrak{H}_{n},\mathfrak{H}_{n}^{s-1}\right) \cong \pi_{i}\left(\Barr_{A}\left(\mathfrak{H}_{n},A\langle X\rangle_{n},\mathfrak{H}_{n}^{s-1}\right)\right) \cong H_{i}\left(C_{A}\left(\Barr_{A}\left(\mathfrak{H}_{n},A\langle X\rangle_{n},\mathfrak{H}_{n}^{s-1}\right)\right)\right)$$
As a result, we get the following short exact sequence of $A$-modules for every $i\geq 0$:
$$0 \rightarrow \im\left(\partial_{i+1}^{C_{A}\left(\Barr_{A}\left(\mathfrak{H}_{n},A\langle X\rangle_{n},\mathfrak{H}_{n}^{s-1}\right)\right)}\right) \rightarrow \Ker\left(\partial_{i}^{C_{A}\left(\Barr_{A}\left(\mathfrak{H}_{n},A\langle X\rangle_{n},\mathfrak{H}_{n}^{s-1}\right)\right)}\right) \rightarrow \Tor_{i}^{A\langle X\rangle_{n}}\left(\mathfrak{H}_{n},\mathfrak{H}_{n}^{s-1}\right) \rightarrow 0$$
Since this short exact sequence is natural, we get the following short exact sequence of simplicial $A$-modules for every $i\geq 0$:
$$0 \rightarrow \im\left(\partial_{i+1}^{C_{A}\left(\Barr_{A}\left(\mathfrak{H},A\langle X\rangle,\mathfrak{H}^{s-1}\right)\right)}\right) \rightarrow \Ker\left(\partial_{i}^{C_{A}\left(\Barr_{A}\left(\mathfrak{H},A\langle X\rangle,\mathfrak{H}^{s-1}\right)\right)}\right) \rightarrow \Tor_{i}^{A\langle X\rangle}\left(\mathfrak{H},\mathfrak{H}^{s-1}\right) \rightarrow 0$$
Using what was proved in the previous paragraph, we get the following exact sequence for every $i\geq 1$ and $0\leq k\leq s-2$:
\small
$$\pi_{k}\left(\im\left(\partial_{i+1}^{C_{A}\left(\Barr_{A}\left(\mathfrak{H},A\langle X\rangle,\mathfrak{H}^{s-1}\right)\right)}\right)\right) \rightarrow \pi_{k}\left(\Ker\left(\partial_{i}^{C_{A}\left(\Barr_{A}\left(\mathfrak{H},A\langle X\rangle,\mathfrak{H}^{s-1}\right)\right)}\right)\right) \rightarrow \pi_{k}\left(\Tor_{i}^{A\langle X\rangle}\left(\mathfrak{H},\mathfrak{H}^{s-1}\right)\right)=0$$
\normalsize
We have for every $i\geq 0$:
$$C_{A}\left(\Barr_{A}\left(\mathfrak{H},A\langle X\rangle,\mathfrak{H}^{s-1}\right)\right)_{i} = \Barr_{A}\left(\mathfrak{H},A\langle X\rangle,\mathfrak{H}^{s-1}\right)_{i} = \mathfrak{H}\otimes_{A}T^{i}_{A}\left(A\langle X\rangle\right)\otimes_{A}\mathfrak{H}^{s-1}$$
Since $\mathfrak{H}_{n}$ is a projective, hence flat $A$-module and $\pi_{0}(\mathfrak{H})=0$, by Corollary \ref{03.4}, we have $\pi_{0}\left(\mathfrak{H}\otimes_{A} T_{A}^{i}\left(A\langle X\rangle\right)\right) \cong \pi_{0}(\mathfrak{H}) \otimes_{A} \pi_{0}\left(T_{A}^{i}\left(A\langle X\rangle)\right)\right)=0$, i.e. $\pi_{j}\left(\mathfrak{H}\otimes_{A} T_{A}^{i}\left(A\langle X\rangle\right)\right)=0$ for every $j\leq 0$ and $i\geq 0$. On the other hand, $\pi_{j}(\mathfrak{H}^{s-1})=0$ for every $j\leq s-2$ by the induction hypothesis. Since $\mathfrak{H}_{n}^{s}$ is a projective, hence flat $A$-module, it follows from Corollary \ref{03.5} that
$$\pi_{j}\left(C_{A}\left(\Barr_{A}\left(\mathfrak{H},A\langle X\rangle,\mathfrak{H}^{s-1}\right)\right)_{i}\right) = \pi_{j}\left(\mathfrak{H}\otimes_{A}T^{i}_{A}\left(A\langle X\rangle\right)\otimes_{A}\mathfrak{H}^{s-1}\right) = 0$$
for every $j\leq 0+s-2+1=s-1$ and $i\geq 0$. Now consider the following short exact sequence of simplicial $A$-modules for every $i\geq 0$:
\small
$$0 \rightarrow \Ker\left(\partial_{i}^{C_{A}\left(\Barr_{A}\left(\mathfrak{H},A\langle X\rangle,\mathfrak{H}^{s-1}\right)\right)}\right) \rightarrow C_{A}\left(\Barr_{A}\left(\mathfrak{H},A\langle X\rangle,\mathfrak{H}^{s-1}\right)\right)_{i} \rightarrow \im\left(\partial_{i}^{C_{A}\left(\Barr_{A}\left(\mathfrak{H},A\langle X\rangle,\mathfrak{H}^{s-1}\right)\right)}\right) \rightarrow 0$$
\normalsize
We get the following exact sequence for every $i\geq 0$ and $k\leq s-2$:
$$0= \pi_{k+1}\left(C_{A}\left(\Barr_{A}\left(\mathfrak{H},A\langle X\rangle,\mathfrak{H}^{s-1}\right)\right)_{i}\right) \rightarrow \pi_{k+1}\left(\im\left(\partial_{i}^{C_{A}\left(\Barr_{A}\left(\mathfrak{H},A\langle X\rangle,\mathfrak{H}^{s-1}\right)\right)}\right)\right) $$$$ \rightarrow \pi_{k}\left(\Ker\left(\partial_{i}^{C_{A}\left(\Barr_{A}\left(\mathfrak{H},A\langle X\rangle,\mathfrak{H}^{s-1}\right)\right)}\right)\right)$$
So far, we have obtained the following two exact sequences for every $i\geq 1$ and $k\leq s-2$:
$$\pi_{k}\left(\im\left(\partial_{i+1}^{C_{A}\left(\Barr_{A}\left(\mathfrak{H},A\langle X\rangle,\mathfrak{H}^{s-1}\right)\right)}\right)\right) \rightarrow \pi_{k}\left(\Ker\left(\partial_{i}^{C_{A}\left(\Barr_{A}\left(\mathfrak{H},A\langle X\rangle,\mathfrak{H}^{s-1}\right)\right)}\right)\right) \rightarrow 0$$
and
$$0\rightarrow \pi_{k+1}\left(\im\left(\partial_{i}^{C_{A}\left(\Barr_{A}\left(\mathfrak{H},A\langle X\rangle,\mathfrak{H}^{s-1}\right)\right)}\right)\right) \rightarrow \pi_{k}\left(\Ker\left(\partial_{i}^{C_{A}\left(\Barr_{A}\left(\mathfrak{H},A\langle X\rangle,\mathfrak{H}^{s-1}\right)\right)}\right)\right)$$
These sequences conspire to show that if $i\geq 1$ and $k\leq s-2$, then the following implication holds:
$$\pi_{k}\left(\im\left(\partial_{i+1}^{C_{A}\left(\Barr_{A}\left(\mathfrak{H},A\langle X\rangle,\mathfrak{H}^{s-1}\right)\right)}\right)\right)=0 \Longrightarrow \pi_{k+1}\left(\im\left(\partial_{i}^{C_{A}\left(\Barr_{A}\left(\mathfrak{H},A\langle X\rangle,\mathfrak{H}^{s-1}\right)\right)}\right)\right)=0$$
If $i\geq 0$, then $\pi_{0}\left(C_{A}\left(\Barr_{A}\left(\mathfrak{H},A\langle X\rangle,\mathfrak{H}^{s-1}\right)\right)_{i+1}\right)=0$, so the short exact sequence
\small
$$0 \rightarrow \Ker\left(\partial_{i+1}^{C_{A}\left(\Barr_{A}\left(\mathfrak{H},A\langle X\rangle,\mathfrak{H}^{s-1}\right)\right)}\right) \rightarrow C_{A}\left(\Barr_{A}\left(\mathfrak{H},A\langle X\rangle,\mathfrak{H}^{s-1}\right)\right)_{i+1} \rightarrow \im\left(\partial_{i+1}^{C_{A}\left(\Barr_{A}\left(\mathfrak{H},A\langle X\rangle,\mathfrak{H}^{s-1}\right)\right)}\right) \rightarrow 0$$
\normalsize
gives the following exact sequence:
$$0= \pi_{0}\left(C_{A}\left(\Barr_{A}\left(\mathfrak{H},A\langle X\rangle,\mathfrak{H}^{s-1}\right)\right)_{i+1}\right) \rightarrow \pi_{0}\left(\im\left(\partial_{i+1}^{C_{A}\left(\Barr_{A}\left(\mathfrak{H},A\langle X\rangle,\mathfrak{H}^{s-1}\right)\right)}\right)\right) \rightarrow 0$$
Thus $\pi_{0}\left(\im\left(\partial_{i+1}^{C_{A}\left(\Barr_{A}\left(\mathfrak{H},A\langle X\rangle,\mathfrak{H}^{s-1}\right)\right)}\right)\right) = 0$ for every $i\geq 0$. Using the aforementioned implication repeatedly, we end up with $$\pi_{i}\left(\im\left(\partial_{1}^{C_{A}\left(\Barr_{A}\left(\mathfrak{H},A\langle X\rangle,\mathfrak{H}^{s-1}\right)\right)}\right)\right) = 0$$
for every $i\leq s-1$. Consider the following short exact sequence:
$$0 \rightarrow \im\left(\partial_{1}^{C_{A}\left(\Barr_{A}\left(\mathfrak{H},A\langle X\rangle,\mathfrak{H}^{s-1}\right)\right)}\right) \rightarrow \Ker\left(\partial_{0}^{C_{A}\left(\Barr_{A}\left(\mathfrak{H},A\langle X\rangle,\mathfrak{H}^{s-1}\right)\right)}\right) \rightarrow \Tor_{0}^{A\langle X\rangle}\left(\mathfrak{H},\mathfrak{H}^{s-1}\right) \rightarrow 0$$
We have
\begin{equation*}
\begin{split}
\Ker\left(\partial_{0}^{C_{A}\left(\Barr_{A}\left(\mathfrak{H},A\langle X\rangle,\mathfrak{H}^{s-1}\right)\right)}\right) & = C_{A}\left(\Barr_{A}\left(\mathfrak{H},A\langle X\rangle,\mathfrak{H}^{s-1}\right)\right)_{0} = \mathfrak{H}\otimes_{A}T^{0}_{A}\left(A\langle X\rangle\right)\otimes_{A}\mathfrak{H}^{s-1} \\
& = \mathfrak{H}\otimes_{A}A\otimes_{A}\mathfrak{H}^{s-1} \cong \mathfrak{H}\otimes_{A}\mathfrak{H}^{s-1}
\end{split}
\end{equation*}
and $\Tor_{0}^{A\langle X\rangle}\left(\mathfrak{H},\mathfrak{H}^{s-1}\right) \cong \mathfrak{H}\otimes_{A\langle X\rangle}\mathfrak{H}^{s-1}$, so we obtain the following short exact sequence:
$$0 \rightarrow \im\left(\partial_{1}^{C_{A}\left(\Barr_{A}\left(\mathfrak{H},A\langle X\rangle,\mathfrak{H}^{s-1}\right)\right)}\right) \rightarrow \mathfrak{H}\otimes_{A}\mathfrak{H}^{s-1} \rightarrow \mathfrak{H}\otimes_{A\langle X\rangle}\mathfrak{H}^{s-1} \rightarrow 0$$
By the base case, $\pi_{0}(\mathfrak{H})=0$, i.e. $\pi_{k}(\mathfrak{H})=0$ for every $k\leq 0$, and by the induction hypothesis, $\pi_{k}\left(\mathfrak{H}^{s-1}\right)=0$ for every $k\leq s-2$. As $\mathfrak{H}_{n}$ is a projective, hence flat $A$-module, it follows from Corollary \ref{03.5} that $\pi_{k}\left(\mathfrak{H}\otimes_{A}\mathfrak{H}^{s-1}\right)=0$ for every $k\leq 0+s-2+1=s-1$. Therefore, we get the following exact sequence for every $k\leq s-1$:
$$0 = \pi_{k}\left(\mathfrak{H}\otimes_{A}\mathfrak{H}^{s-1}\right) \rightarrow  \pi_{k}\left(\mathfrak{H}\otimes_{A\langle X\rangle}\mathfrak{H}^{s-1}\right) \rightarrow \pi_{k-1}\left(\im\left(\partial_{1}^{C_{A}\left(\Barr_{A}\left(\mathfrak{H},A\langle X\rangle,\mathfrak{H}^{s-1}\right)\right)}\right)\right)=0$$
Hence $\pi_{k}\left(\mathfrak{H}\otimes_{A\langle X\rangle}\mathfrak{H}^{s-1}\right)=0$ for every $k\leq s-1$. We have the following short exact sequence of $A$-modules:
$$\textstyle 0 \rightarrow \Tor_{1}^{A\langle X\rangle_{n}}\left(\frac{A\langle X\rangle_{n}}{\mathfrak{H}_{n}},\mathfrak{H}_{n}^{s-1}\right) \rightarrow \mathfrak{H}_{n}\otimes_{A\langle X\rangle_{n}}\mathfrak{H}_{n}^{s-1} \rightarrow \mathfrak{H}_{n}\mathfrak{H}_{n}^{s-1} \rightarrow 0$$
But $\frac{A\langle X\rangle_{n}}{\mathfrak{H}_{n}} \cong A$ and $\mathfrak{H}_{n}\mathfrak{H}_{n}^{s-1} = \mathfrak{H}_{n}^{s}$, so we get the following short exact sequence of $A$-modules:
$$0 \rightarrow \Tor_{1}^{A\langle X\rangle_{n}}\left(A,\mathfrak{H}_{n}^{s-1}\right) \rightarrow \mathfrak{H}_{n}\otimes_{A\langle X\rangle_{n}}\mathfrak{H}_{n}^{s-1} \rightarrow \mathfrak{H}_{n}^{s} \rightarrow 0$$
Since this short exact sequence is natural, we get the following short exact sequence of simplicial $A$-modules:
$$0 \rightarrow \Tor_{1}^{A\langle X\rangle}\left(A,\mathfrak{H}^{s-1}\right) \rightarrow \mathfrak{H}\otimes_{A\langle X\rangle}\mathfrak{H}^{s-1} \rightarrow \mathfrak{H}^{s} \rightarrow 0$$
Therefore, using what was proved in the previous paragraph, we get the following exact sequence for every $k\leq s-1$:
$$0 = \pi_{k}\left(\mathfrak{H}\otimes_{A\langle X\rangle}\mathfrak{H}^{s-1}\right) \rightarrow \pi_{k}\left(\mathfrak{H}^{s}\right) \rightarrow \pi_{k-1}\left(\Tor_{1}^{A\langle X\rangle}\left(A,\mathfrak{H}^{s-1}\right)\right) = 0$$
Hence $\pi_{k}\left(\mathfrak{H}^{s}\right)=0$ for every $k\leq s-1$.
\end{proof}

\section{Fundamental Spectral Sequences}

In this section, we provide complete proofs of Quillen's fundamental spectral sequences. We start with a remark on cotangent complex and Andr\'{e}-Quillen homology and cohomology.

\begin{remark} \label{04.1}
Let $R$ be a commutative ring, and $A$ a commutative $R$-algebra. Then there is a Quillen adjunction as follows:
$$\left(\Omega_{-|R}\otimes_{-}A,A\ltimes -\right):\mathpzc{s}\mathcal{C}\mathpzc{om}\mathcal{A}\mathpzc{lg}(R)/A \leftrightarrows \mathpzc{s}\mathcal{M}\mathpzc{od}(A)$$
See \cite[Proposition 1.7]{Qu1} and \cite[Proposition 4.27]{GS}. By the general theory of derived functors in model categories, the left derived functor of $\Omega_{-|R}\otimes_{-}A:\mathpzc{s}\mathcal{C}\mathpzc{om}\mathcal{A}\mathpzc{lg}(R)/A \rightarrow \mathpzc{s}\mathcal{M}\mathpzc{od}(A)$ exists; see \cite[Theorem 8.5.8]{Hi} and \cite[Lemma 1.3.10]{Ho}. Also, the right derived functor of $\Nor_{A}(-):\mathpzc{s}\mathcal{M}\mathpzc{od}(A)\rightarrow \mathcal{C}_{\geq 0}(A)$ exists. Let $\mathcal{I}: \mathcal{C}_{\geq 0}(A) \rightarrow \mathcal{C}(A)$ be the inclusion functor, and consider the following functors:
$$\Ho\left(\mathpzc{s}\mathcal{C}\mathpzc{om}\mathcal{A}\mathpzc{lg}(R)/A\right) \xrightarrow{\textrm{L}\left(\Omega_{-|R}\otimes_{-}A\right)} \Ho\left(\mathpzc{s}\mathcal{M}\mathpzc{od}(A)\right) \xrightarrow{\textrm{R}\Nor_{A}(-)} \Ho\left(\mathcal{C}_{\geq 0}(A)\right) \xrightarrow{\Ho(\mathcal{I})} \mathcal{D}(A)$$
Then the cotangent complex of $A$ over $R$ is defined as follows:
$$\mathbb{L}^{A|R}:= \Ho(\mathcal{I})\left(\textrm{R}\Nor_{A}\left(\textrm{L}\left(\Omega_{-|R}\otimes_{-}A\right)\left(1^{A}\right)\right)\right)$$
More concretely, if $\rho_{A}:A_{\textrm{c}}\rightarrow A$ is any simplicial algebra resolution of $A$, then we have:
$$\mathbb{L}^{A|R}= \Nor_{A}\left(\Omega_{A_{\textrm{c}}|R}\otimes_{A_{\textrm{c}}}A\right)$$
Now if $M$ is an $A$-module, then for any $i\in \mathbb{Z}$, the $i$th Andr\'{e}-Quillen homology of $A$ over $R$ with coefficients in $M$ is defined as
$$H_{i}^{AQ}(A|R;M):= \Tor_{i}^{A}\left(\mathbb{L}^{A|R},M\right)= H_{i}\left(\mathbb{L}^{A|R}\otimes_{A}M\right),$$
and the $i$th Andr\'{e}-Quillen cohomology of $A$ over $R$ with coefficients in $M$ is defined as
$$H_{AQ}^{i}(A|R;M):= \Ext_{A}^{i}\left(\mathbb{L}^{A|R},M\right)= H_{-i}\left(\Hom_{A}\left(\mathbb{L}^{A|R},M\right)\right).$$
\end{remark}

Now we are ready to present Quillen's fundamental spectral sequences. Note that a sketchy argument with gaps for a special case of the first spectral sequence appears in an unpublished manuscript of Quillen; see \cite[Theorem 6.8]{Qu1}. Also, the second spectral sequence is only stated without proof in another
paper of Quillen; see \cite[Theorem 6.8]{Qu3}.

\begin{theorem} \label{04.2}
Let $R$ be a ring, $A$ a commutative $R$-algebra, and $M$ an $A$-module. Let $\rho:R\langle X\rangle \rightarrow A$ be a simplicial algebra resolution of $A$. Suppose that the natural $R$-algebra epimorphism $\mu_{A}:A\otimes_{R}A\rightarrow A$, given by $\mu_{A} (a\otimes b)=ab$ for every $a,b\in A$, is an isomorphism. Then the following assertions hold:
\begin{enumerate}
\item[(i)] There is a first quadrant spectral sequence as follows:
$$E_{p,q}^{2}= H_{p+q}\left(\Nor_{A}\left(\Sym_{A}^{q}\left(\Omega_{R\langle X\rangle \mid R}\otimes_{R\langle X\rangle}A\right)\right)\otimes_{A}M\right) \Rightarrow \Tor_{p+q}^{R}(A,M)$$
\item[(ii)] There is a third quadrant spectral sequence as follows:
$$E_{p,q}^{2}= H_{p+q}\left(\Hom_{A}\left(\Nor_{A}\left(\Sym_{A}^{-q}\left(\Omega_{R\langle X\rangle \mid R}\otimes_{R\langle X\rangle}A\right)\right),M\right)\right) \Rightarrow \Ext_{R}^{-p-q}(A,M)$$
\end{enumerate}
\end{theorem}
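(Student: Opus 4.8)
The plan is to realise both spectral sequences as reparametrizations of the spectral sequences attached to the $\mathfrak{K}$-adic filtration of Theorem~\ref{03.6}, using Quillen's Connectivity Lemma to verify the hypotheses of Corollary~\ref{02.3}. Throughout I would write $G=R\langle X\rangle\otimes_{R}A$, let $\varphi:G\to A$ be as in Theorem~\ref{03.6}, and $\mathfrak{K}=\Ker(\varphi)$, so that each $\mathfrak{K}^{s}$ is a simplicial ideal with $\mathfrak{K}^{s}_{n}$ a projective $A$-module, $\pi_{k}(\mathfrak{K}^{s})=0$ for $k\le s-1$ (Theorem~\ref{03.6}), and $\pi_{k}(\mathfrak{K}^{p}/\mathfrak{K}^{q})=0$ for $k\le p-1$ by the long exact sequences along $\mathfrak{K}^{p}\supseteq\cdots\supseteq\mathfrak{K}^{q}$. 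The section $a\mapsto 1\otimes a$ of $\varphi$ splits the sequences $0\to\mathfrak{K}^{p}\to G\to G/\mathfrak{K}^{p}\to 0$ degreewise over $A$, so these remain exact after applying $-\otimes_{A}M$ or $\Hom_{A}(-,M)$; moreover one has natural isomorphisms $\mathfrak{K}/\mathfrak{K}^{2}\cong\Omega_{R\langle X\rangle\mid R}\otimes_{R\langle X\rangle}A$ and, via Lemma~\ref{03.2} applied degreewise (the multiplication map $\Sym_{A}^{s}(\mathfrak{K}/\mathfrak{K}^{2})\to\mathfrak{K}^{s}/\mathfrak{K}^{s+1}$ is a degreewise isomorphism), $\mathfrak{K}^{s}/\mathfrak{K}^{s+1}\cong\Sym_{A}^{s}\!\big(\Omega_{R\langle X\rangle\mid R}\otimes_{R\langle X\rangle}A\big)$ as simplicial $A$-modules.

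For (i), I would put $P=R\langle X\rangle\otimes_{R}M\cong G\otimes_{A}M$; since $\Nor_{R}(R\langle X\rangle)$ is a projective $R$-resolution of $A$, Lemma~\ref{03.3} gives $\pi_{n}(P)\cong\Tor_{n}^{R}(A,M)$. Filter $\Nor_{A}(P)$ by $F^{p}:=\Nor_{A}(P)$ for $p\ge 0$ and $F^{p}:=\Nor_{A}(\mathfrak{K}^{-p}\otimes_{A}M)$ for $p<0$; this is an increasing filtration by the degreewise splitting, and Corollary~\ref{03.5} (applied to $\mathfrak{K}^{-p}\otimes_{A}M$, using $\pi_{k}(\mathfrak{K}^{-p})=0$ for $k\le -p-1$ and the constant module $M$) shows $H_{n}(F^{p})=0$ for $p\le -n-1$, while $H_{n}(\iota^{p})$ is an isomorphism for $p\ge 0$. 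Hence Corollary~\ref{02.3} yields a spectral sequence $E^{1}_{p,q}=H_{p+q}(F^{p}/F^{p-1})\Rightarrow\Tor_{p+q}^{R}(A,M)$, where, by the degreewise splitting, Lemma~\ref{03.3} and the identifications above, $E^{1}_{p,q}\cong H_{p+q}\!\big(\Nor_{A}(\Sym_{A}^{-p}(\Omega_{R\langle X\rangle\mid R}\otimes_{R\langle X\rangle}A))\otimes_{A}M\big)$ for $p\le 0$ and $E^{1}_{p,q}=0$ for $p\ge 1$. I would then reindex: the assignment $\widetilde{E}^{\,r+1}_{p,q}:=E^{r}_{-q,\,p+2q}$, with differentials relabelled accordingly, has differentials of bidegree $(-(r+1),r)$ and therefore is a spectral sequence whose second page $\widetilde{E}^{\,2}_{p,q}=E^{1}_{-q,\,p+2q}$ is exactly the displayed $E^{2}$; it converges to $\Tor_{p+q}^{R}(A,M)$, and it is first quadrant because $\widetilde{E}^{\,2}_{p,q}=0$ for $q<0$ (there $-q>0$) and $\widetilde{E}^{\,2}_{p,q}=0$ for $p<0$ since Corollary~\ref{03.5} gives $\pi_{m}\!\big(\Sym_{A}^{q}(\Omega_{R\langle X\rangle\mid R}\otimes_{R\langle X\rangle}A)\otimes_{A}M\big)=0$ for $m\le q-1$.

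For (ii), Hom--tensor adjunction together with the fact that $\Nor$ commutes with $-\otimes_{R}A$ for the constant coefficient gives $\Ext_{R}^{n}(A,M)\cong H^{n}\!\big(\Hom_{A}(\Nor_{A}(G),M)\big)$. Set $Y=\Hom_{A}(\Nor_{A}(G),M)$, regarded as a complex concentrated in non-positive degrees with $H_{i}(Y)=\Ext_{R}^{-i}(A,M)$, and filter it by $F^{p}Y:=\Hom_{A}(\Nor_{A}(G/\mathfrak{K}^{p}),M)$ for $p\ge 1$ and $F^{p}Y:=0$ for $p\le 0$ (an increasing filtration by subcomplexes, again by the degreewise splitting). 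The crucial point is that this filtration does \emph{not} exhaust $Y$, but $Y':=\bigcup_{p}F^{p}Y\hookrightarrow Y$ is a quasi-isomorphism: indeed $Y/Y'\cong\varinjlim_{p}\Hom_{A}(\Nor_{A}(\mathfrak{K}^{p}),M)$, and since $\Nor_{A}(\mathfrak{K}^{p})$ is a bounded-below complex of projective $A$-modules acyclic in degrees $<p$ (Theorem~\ref{03.6}), one has $H_{j}\!\big(\Hom_{A}(\Nor_{A}(\mathfrak{K}^{p}),M)\big)=0$ for $j>-p$, so the filtered colimit vanishes. The same estimate applied to $\mathfrak{K}^{p}/\mathfrak{K}^{q}$ shows $H_{n}(\iota^{p}):H_{n}(F^{p}Y')\to H_{n}(Y')$ is an isomorphism for $p\ge 1-n$ while $H_{n}(F^{p}Y')=0$ for $p\le 0$, so Corollary~\ref{02.3} applies to $Y'$ and produces $E^{1}_{p,q}=H_{p+q}(F^{p}Y/F^{p-1}Y)\Rightarrow H_{p+q}(Y)=\Ext_{R}^{-p-q}(A,M)$, with $F^{p}Y/F^{p-1}Y\cong\Hom_{A}\!\big(\Nor_{A}(\Sym_{A}^{p-1}(\Omega_{R\langle X\rangle\mid R}\otimes_{R\langle X\rangle}A)),M\big)$ for $p\ge 1$ and $0$ otherwise. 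Reindexing by $\widetilde{E}^{\,r+1}_{p,q}:=E^{r}_{1-q,\,p+2q-1}$ gives a spectral sequence whose second page is the displayed $E^{2}$, converging to $\Ext_{R}^{-p-q}(A,M)$; it is third quadrant since $\widetilde{E}^{\,2}_{p,q}=0$ for $q>0$ (then $1-q\le 0$) and for $p>0$, the latter because $\Nor_{A}(\Sym_{A}^{-q}(\Omega_{R\langle X\rangle\mid R}\otimes_{R\langle X\rangle}A))$ is a bounded-below complex of projectives acyclic in degrees $<-q$, whence $H_{m}\!\big(\Hom_{A}(\Nor_{A}(\Sym_{A}^{-q}(\Omega_{R\langle X\rangle\mid R}\otimes_{R\langle X\rangle}A)),M)\big)=0$ for $m>q$.

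The routine-but-delicate points are the degreewise splittings, the naturality in the simplicial variable of $\mathfrak{K}^{s}/\mathfrak{K}^{s+1}\cong\Sym_{A}^{s}(\mathfrak{K}/\mathfrak{K}^{2})$, and the linear bookkeeping in the reparametrization that turns the $E^{1}$ produced by a filtration into the stated $E^{2}$ --- the step elided in \cite[Theorem~6.8]{Qu1}. The genuine difficulty, and the reason a naive dualization fails in (ii), is isolating the right filtration of the $\Hom$-complex and realising that, although it fails to exhaust that complex, the Connectivity Lemma forces the defect to be acyclic; the same Lemma simultaneously delivers the finiteness of the induced filtration on $\Ext$ and the third-quadrant vanishing. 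The remaining input is the homological algebra of Section~2 together with Lemmas~\ref{03.2} and \ref{03.3} and Corollary~\ref{03.5}.
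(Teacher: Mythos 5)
Your proposal follows the same architecture as the paper's proof: the $\mathfrak{K}$-adic filtration of $\Nor_{A}(R\langle X\rangle\otimes_{R}A)$, tensor or Hom with $M$ using the degreewise splittings to get filtered complexes, convergence via Theorem~\ref{03.6} together with Corollary~\ref{02.3}, identification of $F^{p}/F^{p-1}$ through Lemma~\ref{03.2}, and the reparametrization $(p,q)\mapsto(2p+q,-p)$ from the $E^{1}$ produced by Corollary~\ref{02.3} to the stated $E^{2}$; the filtrations you isolate, including the subtle one for (ii), coincide with the paper's. Two differences of execution are worth recording. For the crucial low-degree vanishing of $H_{n}(F^{p})$, you invoke Corollary~\ref{03.5} on the tensor side and, on the Hom side, assert that a bounded-below complex of projectives acyclic in degrees $<s$ gives $H_{j}(\Hom_{A}(-,M))=0$ for $j>-s$; the paper instead proves a universal-coefficients-type isomorphism $H_{k}(\Nor_{A}(\mathfrak{K}^{s})\otimes_{A}M)\cong H_{k}(\Nor_{A}(\mathfrak{K}^{s}))\otimes_{A}M$ (and its Hom dual) by an induction using right/left exactness and a two-term flat/injective resolution of $M$. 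Both routes are valid, but your Hom-side claim is stated without justification and is the one genuinely non-trivial step left implicit; it does follow from the standard splitting argument (a connective complex of projectives with $H_{i}=0$ for $i<s$ splits off contractible summands in those degrees, so $\Hom_{A}(-,M)$ stays exact there), and a careful write-up should supply those few lines or cite a lemma. Finally, for (ii) you pass to $Y'=\bigcup_{p}F^{p}Y$ and show $Y'\hookrightarrow Y$ is a quasi-isomorphism before invoking Corollary~\ref{02.3}; this is correct but unnecessary, since Corollary~\ref{02.3} imposes no exhaustiveness requirement, and the paper gets the needed isomorphisms $H_{n}(\iota^{p})$ directly from the long exact sequence of $0\to F^{p}\to Y\to\Hom_{A}(\Nor_{A}(\mathfrak{K}^{p}),M)\to 0$ together with the same acyclicity estimate.
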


\begin{proof}
We first note that the spectral sequences are independent of the simplicial algebra resolution chosen since $\Omega_{R\langle X\rangle |R} \otimes_{R\langle X\rangle}A = \textrm{L}\left(\Omega_{- |R} \otimes_{-}A\right)\left(1^{A}\right)$. Consider the morphism $\varphi:R\langle X\rangle \otimes_{R}A \rightarrow A$ of simplicial $A$-algebras, given by $\varphi_{n}(p\otimes a)= \rho_{n}(p)a$ for every $n\geq 0$, $p\in R\langle X\rangle_{n}$, and $a\in A$. Set $\mathfrak{K}= \Ker(\varphi)$ regarded as a simplicial ideal in the simplicial $A$-algebra $R\langle X\rangle \otimes_{R}A$. This gives a filtration
$$\cdots \subseteq \mathfrak{K}^{s+1} \subseteq \mathfrak{K}^{s} \subseteq \cdots \subseteq \mathfrak{K}^{2} \subseteq \mathfrak{K} \subseteq R\langle X\rangle \otimes_{R}A$$
of the simplicial $A$-module $R\langle X\rangle \otimes_{R}A$. Since the normalization functor is exact, after applying the normalization functor and making the required identifications, we can assume that there is a filtration
$$\cdots \subseteq \Nor_{A}(\mathfrak{K}^{s+1}) \subseteq \Nor_{A}(\mathfrak{K}^{s}) \subseteq \cdots \subseteq \Nor_{A}(\mathfrak{K}^{2}) \subseteq \Nor_{A}(\mathfrak{K}) \subseteq \Nor_{A}\left(R\langle X\rangle \otimes_{R}A\right)$$
of the $A$-complex $\Nor_{A}\left(R\langle X\rangle \otimes_{R}A\right)$. Furthermore, we observed in the proof of Theorem \ref{03.6} that for any $s,n\geq 0$, $\frac{\mathfrak{K}_{n}^{s}}{\mathfrak{K}_{n}^{s+1}}$ is a projective $A$-module, so its direct summand $\Nor_{A}\left(\frac{\mathfrak{K}^{s}}{\mathfrak{K}^{s+1}}\right)_{n}$ is a projective $A$-module as well; see \cite[Lemma 8.3.7]{We} or \cite[Remark 4.6]{Fa}. Thus for any $s\geq 0$, the short exact sequence $0\rightarrow \mathfrak{K}^{s+1}\rightarrow \mathfrak{K}^{s}\rightarrow \frac{\mathfrak{K}^{s}}{\mathfrak{K}^{s+1}}\rightarrow 0$ of simplicial $A$-modules yields the degreewise split short exact sequence
$$\textstyle 0\rightarrow \Nor_{A}(\mathfrak{K}^{s+1})\rightarrow \Nor_{A}(\mathfrak{K}^{s})\rightarrow \Nor_{A}\left(\frac{\mathfrak{K}^{s}}{\mathfrak{K}^{s+1}}\right)\rightarrow 0$$
of $A$-complexes. Then:

(i): For any $s\geq 0$, the degreewise split short exact sequence
$$\textstyle 0\rightarrow \Nor_{A}(\mathfrak{K}^{s+1})\rightarrow \Nor_{A}(\mathfrak{K}^{s})\rightarrow \Nor_{A}\left(\frac{\mathfrak{K}^{s}}{\mathfrak{K}^{s+1}}\right)\rightarrow 0$$
of $A$-complexes gives the degreewise split short exact sequence
$$\textstyle 0\rightarrow \Nor_{A}(\mathfrak{K}^{s+1})\otimes_{A}M \rightarrow \Nor_{A}(\mathfrak{K}^{s})\otimes_{A}M \rightarrow \Nor_{A}\left(\frac{\mathfrak{K}^{s}}{\mathfrak{K}^{s+1}}\right)\otimes_{A}M \rightarrow 0$$
of $A$-complexes. After performing the required identifications, we can assume that $\Nor_{A}(\mathfrak{K}^{s+1})\otimes_{A}M \subseteq \Nor_{A}(\mathfrak{K}^{s})\otimes_{A}M$ and $\frac{\Nor_{A}(\mathfrak{K}^{s})\otimes_{A}M}{\Nor_{A}(\mathfrak{K}^{s+1})\otimes_{A}M} \cong \Nor_{A}\left(\frac{\mathfrak{K}^{s}}{\mathfrak{K}^{s+1}}\right)\otimes_{A}M$. This gives a filtration
\small
$$\cdots \subseteq \Nor_{A}(\mathfrak{K}^{s+1})\otimes_{A}M \subseteq \Nor_{A}(\mathfrak{K}^{s})\otimes_{A}M \subseteq \cdots \subseteq \Nor_{A}(\mathfrak{K}^{2})\otimes_{A}M \subseteq \Nor_{A}(\mathfrak{K})\otimes_{A}M \subseteq \Nor_{A}\left(R\langle X\rangle \otimes_{R}A\right)\otimes_{A}M$$
\normalsize
of the $A$-complex $\Nor_{A}\left(R\langle X\rangle \otimes_{R}A\right)\otimes_{A}M$. Setting $F^{p}= \Nor_{A}(\mathfrak{K}^{-p})\otimes_{A}M$ for every $p\in \mathbb{Z}$, the above filtration takes the following standard form:
$$\cdots \subseteq F^{p-1} \subseteq F^{p} \subseteq \cdots \subseteq F^{-1} \subseteq F^{0} = \Nor_{A}\left(R\langle X\rangle \otimes_{R}A\right)\otimes_{A}M$$

We next show that $H_{k}\left(\Nor_{A}(\mathfrak{K}^{s})\otimes_{A}M\right)=0$ for every $s\geq 0$ and $k\leq s-1$. Fix $s\geq 0$. Following the ideas of \cite[Chapter II, Lemme 18]{An}, we show that $H_{k}\left(\Nor_{A}(\mathfrak{K}^{s})\otimes_{A}M\right) \cong H_{k}\left(\Nor_{A}(\mathfrak{K}^{s})\right)\otimes_{A}M$ for every $A$-module $M$ and $k\leq s-1$. For $k<0$, both sides are zero, so we may assume that $0\leq k\leq s-1$. We argue by induction on $k$. Let $k=0$. Then we have:
\begin{equation*}
\begin{split}
 H_{0}\left(\Nor_{A}(\mathfrak{K}^{s})\otimes_{A}M\right) & = \Coker\left(\partial_{1}^{\Nor_{A}(\mathfrak{K}^{s})\otimes_{A}M}\right) = \Coker\left(\partial_{1}^{\Nor_{A}(\mathfrak{K}^{s})}\otimes_{A}M\right) \cong \Coker\left(\partial_{1}^{\Nor_{A}(\mathfrak{K}^{s})}\right)\otimes_{A}M \\
 & = H_{0}\left(\Nor_{A}(\mathfrak{K}^{s})\right)\otimes_{A}M
\end{split}
\end{equation*}
Now suppose that $k\geq 1$, and the result holds for $k-1$. Let $0\rightarrow N' \rightarrow N \rightarrow N'' \rightarrow 0$ be a short exact sequence of $A$-modules. Since $\Nor_{A}(\mathfrak{K}^{s})$ is an $A$-complex of projective, hence flat modules, by \cite[Example 2.4.17]{CFH}, we get the short exact sequence
$$0\rightarrow \Nor_{A}(\mathfrak{K}^{s})\otimes_{A}N' \rightarrow \Nor_{A}(\mathfrak{K}^{s})\otimes_{A}N \rightarrow \Nor_{A}(\mathfrak{K}^{s})\otimes_{A}N'' \rightarrow 0$$
of $A$-complexes, which gives the following exact sequence:
$$H_{k}\left(\Nor_{A}(\mathfrak{K}^{s})\otimes_{A}N'\right) \rightarrow H_{k}\left(\Nor_{A}(\mathfrak{K}^{s})\otimes_{A}N\right) \rightarrow H_{k}\left(\Nor_{A}(\mathfrak{K}^{s})\otimes_{A}N''\right) \rightarrow H_{k-1}\left(\Nor_{A}(\mathfrak{K}^{s})\otimes_{A}N'\right)$$
By the induction hypothesis, $H_{k-1}\left(\Nor_{A}(\mathfrak{K}^{s})\otimes_{A}M\right) \cong H_{k-1}\left(\Nor_{A}(\mathfrak{K}^{s})\right)\otimes_{A}M$. Since $k-1< k \leq s-1$, Theorem \ref{03.6} implies that $H_{k-1}\left(\Nor_{A}(\mathfrak{K}^{s})\right)= \pi_{k-1}(\mathfrak{K}^{s})=0$, so $H_{k-1}\left(\Nor_{A}(\mathfrak{K}^{s})\otimes_{A}M\right)=0$. Therefore, the above exact sequence turns into the following exact sequence:
$$H_{k}\left(\Nor_{A}(\mathfrak{K}^{s})\otimes_{A}N'\right) \rightarrow H_{k}\left(\Nor_{A}(\mathfrak{K}^{s})\otimes_{A}N\right) \rightarrow H_{k}\left(\Nor_{A}(\mathfrak{K}^{s})\otimes_{A}N''\right) \rightarrow 0$$
This in particular shows that the functor $H_{k}\left(\Nor_{A}(\mathfrak{K}^{s})\otimes_{A}- \right):\mathcal{M}\mathpzc{od}(A) \rightarrow \mathcal{M}\mathpzc{od}(A)$ is right exact. On the other hand, the covariant functor $-\otimes_{A}M:\mathcal{M}\mathpzc{od}(A) \rightarrow \mathcal{M}\mathpzc{od}(A)$ is right exact, so there is a natural $A$-homomorphism $\chi_{M}:H_{k}\left(\Nor_{A}(\mathfrak{K}^{s})\right)\otimes_{A}M \rightarrow H_{k}\left(\Nor_{A}(\mathfrak{K}^{s})\otimes_{A}M\right)$. Moreover, if $F$ is a flat $A$-module, then the functor $-\otimes_{A}F:\mathcal{M}\mathpzc{od}(A) \rightarrow \mathcal{M}\mathpzc{od}(A)$ is exact, so we conclude that $\chi_{F}$ is an isomorphism; see \cite[2.2.19]{CFH}. Now let $F_{1}\rightarrow F_{0}\rightarrow M\rightarrow 0$ be an exact sequence in which $F_{0}$ and $F_{1}$ are flat $A$-modules. Then we get the following commutative diagram with exact rows:
\begin{equation*}
\begin{tikzcd}
  H_{k}\left(\Nor_{A}(\mathfrak{K}^{s})\right)\otimes_{A}F_{1} \arrow{r} \arrow{d}{\cong}[swap]{\chi_{F_{1}}} & H_{k}\left(\Nor_{A}(\mathfrak{K}^{s})\right)\otimes_{A}F_{0} \arrow{r} \arrow{d}{\cong}[swap]{\chi_{F_{0}}} & H_{k}\left(\Nor_{A}(\mathfrak{K}^{s})\right)\otimes_{A}M \arrow{d}[swap]{\chi_{M}} \arrow{r} & 0
  \\
  H_{k}\left(\Nor_{A}(\mathfrak{K}^{s})\otimes_{A}F_{1}\right) \arrow{r} &
  H_{k}\left(\Nor_{A}(\mathfrak{K}^{s})\otimes_{A}F_{0}\right) \arrow{r} &
  H_{k}\left(\Nor_{A}(\mathfrak{K}^{s})\otimes_{A}M\right) \arrow{r} & 0
\end{tikzcd}
\end{equation*}
It follows that $\chi_{M}$ is an isomorphism. Now since $k\leq s-1$, Theorem \ref{03.6} implies that $H_{k}\left(\Nor_{A}(\mathfrak{K}^{s})\right)= \pi_{k}(\mathfrak{K}^{s})=0$, so $H_{k}\left(\Nor_{A}(\mathfrak{K}^{s})\otimes_{A}M\right)=0$.

We showed that $H_{k}\left(\Nor_{A}(\mathfrak{K}^{s})\otimes_{A}M\right)=0$ for every $s\geq 0$ and $k\leq s-1$, which is equivalent to $H_{n}(F^{p})=0$ for every $p\leq -n-1$. Therefore, Corollary \ref{02.3} implies that there is a spectral sequence as follows:
$$\textstyle E_{p,q}^{1}= H_{p+q}\left(\frac{F^{p}}{F^{p-1}}\right) \Rightarrow H_{p+q}\left(\Nor_{A}\left(R\langle X \rangle \otimes_{R}A\right)\otimes_{A}M\right)$$
But $\frac{F^{p}}{F^{p-1}}= \frac{\Nor_{A}(\mathfrak{K}^{-p})\otimes_{A}M}{\Nor_{A}(\mathfrak{K}^{-p+1})\otimes_{A}M} \cong \Nor_{A}\left(\frac{\mathfrak{K}^{-p}}{\mathfrak{K}^{-p+1}}\right)\otimes_{A}M$, so we get the following spectral sequence:
$$\textstyle E_{p,q}^{1}= H_{p+q}\left(\Nor_{A}\left(\frac{\mathfrak{K}^{-p}}{\mathfrak{K}^{-p+1}}\right)\otimes_{A}M\right) \Rightarrow H_{p+q}\left(\Nor_{A}\left(R\langle X \rangle \otimes_{R}A\right)\otimes_{A}M\right)$$
We now reparametrize the spectral sequence as follows. Consider the following differential:
$$\textstyle H_{p+q}\left(\Nor_{A}\left(\frac{\mathfrak{K}^{-p}}{\mathfrak{K}^{-p+1}}\right)\otimes_{A}M\right)= E_{p,q}^{1} \xrightarrow{d_{p,q}^{1}} E_{p-1,q}^{1}= H_{p+q-1}\left(\Nor_{A}\left(\frac{\mathfrak{K}^{-p+1}}{\mathfrak{K}^{-p+2}}\right)\otimes_{A}M\right)$$
We find $p'$ and $q'$ such that $q'=-p$ and $p'+q'= p+q$. Thus $p'= 2p+q$ and $q'= -p$. Then $H_{p+q}\left(\Nor_{A}\left(\frac{\mathfrak{K}^{-p}}{\mathfrak{K}^{-p+1}}\right)\otimes_{A}M\right)$ is transformed into $H_{p'+q'}\left(\Nor_{A}\left(\frac{\mathfrak{K}^{q'}}{\mathfrak{K}^{q'+1}}\right)\otimes_{A}M\right)$, and $H_{p+q-1}\left(\Nor_{A}\left(\frac{\mathfrak{K}^{-p+1}}{\mathfrak{K}^{-p+2}}\right)\otimes_{A}M\right)$ is transformed into $H_{p'+q'-1}\left(\Nor_{A}\left(\frac{\mathfrak{K}^{q'+1}}{\mathfrak{K}^{q'+2}}\right)\otimes_{A}M\right)$. Since $d_{p,q}^{1}$ is induced by $\partial_{p+q}^{\Nor_{A}\left(R\langle X \rangle \otimes_{R}A\right)\otimes_{A}M}$, we see that $d_{p,q}^{1}$ is transformed into the following homomorphism which is induced by $\partial_{p'+q'}^{\Nor_{A}\left(R\langle X \rangle \otimes_{R}A\right)\otimes_{A}M}$:
$$\textstyle H_{p'+q'}\left(\Nor_{A}\left(\frac{\mathfrak{K}^{q'}}{\mathfrak{K}^{q'+1}}\right)\otimes_{A}M\right) \rightarrow H_{p'+q'-1}\left(\Nor_{A}\left(\frac{\mathfrak{K}^{q'+1}}{\mathfrak{K}^{q'+2}}\right)\otimes_{A}M\right)$$
Looking carefully at the indices, we notice that this homomorphism should be $d_{p',q'}^{2}$. In other words, we have:
$$\textstyle H_{p'+q'}\left(\Nor_{A}\left(\frac{\mathfrak{K}^{q'}}{\mathfrak{K}^{q'+1}}\right)\otimes_{A}M\right)=E_{p',q'}^{2} \xrightarrow{d_{p',q'}^{2}} E_{p'-2,q'+1}^{2}=H_{p'+q'-1}\left(\Nor_{A}\left(\frac{\mathfrak{K}^{q'+1}}{\mathfrak{K}^{q'+2}}\right)\otimes_{A}M\right)$$
Reverting the indices back to $p$ and $q$, we get the following spectral sequence:
$$\textstyle E_{p,q}^{2}= H_{p+q}\left(\Nor_{A}\left(\frac{\mathfrak{K}^{q}}{\mathfrak{K}^{q+1}}\right)\otimes_{A}M\right) \Rightarrow H_{p+q}\left(\Nor_{A}\left(R\langle X \rangle \otimes_{R}A\right)\otimes_{A}M\right)$$
By \cite[Exercise 5.5]{Iy} or \cite[paragraph before Lemma 6.5]{Qu1}, there is an isomorphism $\Omega_{R\langle X\rangle \mid R}\otimes_{R\langle X\rangle}A \cong \frac{\mathfrak{K}}{\mathfrak{K}^{2}}$ of simplicial $A$-modules, so for any $q\geq 0$, there is an isomorphism
$$\textstyle \frac{\mathfrak{K}^{q}}{\mathfrak{K}^{q+1}} \cong \Sym_{A}^{q}\left(\frac{\mathfrak{K}}{\mathfrak{K}^{2}}\right) \cong \Sym_{A}^{q}\left(\Omega_{R\langle X\rangle \mid R}\otimes_{R\langle X\rangle}A \right)$$
of simplicial $A$-modules which gives the following isomorphism of $A$-complexes:
$$\textstyle \Nor_{A}\left(\frac{\mathfrak{K}^{q}}{\mathfrak{K}^{q+1}}\right)\otimes_{A}M \cong \Nor_{A}\left(\Sym_{A}^{q}\left(\Omega_{R\langle X\rangle \mid R}\otimes_{R\langle X\rangle}A \right)\right)\otimes_{A}M$$
Moreover, $\Nor_{R}(\rho):\Nor_{R}\left(R\langle X\rangle\right) \rightarrow \Nor_{R}(A)=A$ is a projective resolution of $A$ as an $R$-module, so we have
\begin{equation*}
\begin{split}
 H_{p+q}\left(\Nor_{A}\left(R\langle X \rangle \otimes_{R}A\right)\otimes_{A}M\right) & \cong H_{p+q}\left(\left(\Nor_{R}\left(R\langle X \rangle\right)\otimes_{R}A\right)\otimes_{A}M\right) \\
 & \cong H_{p+q}\left(\Nor_{R}\left(R\langle X \rangle\right)\otimes_{R}(A\otimes_{A}M)\right) \\
 & \cong H_{p+q}\left(\Nor_{R}\left(R\langle X \rangle\right)\otimes_{R}M\right) \\
 & = \Tor_{p+q}^{R}(A,M)
\end{split}
\end{equation*}
for every $p,q\in \mathbb{Z}$ with $p+q\geq 0$. Thus we get the following spectral sequence:
$$E_{p,q}^{2}= H_{p+q}\left(\Nor_{A}\left(\Sym_{A}^{q}\left(\Omega_{R\langle X\rangle \mid R}\otimes_{R\langle X\rangle}A\right)\right)\otimes_{A}M\right) \Rightarrow \Tor_{p+q}^{R}(A,M)$$

We finally show that the spectral sequence is first quadrant. If $q<0$, then $\Sym_{A}^{q}\left(\Omega_{R\langle X\rangle \mid R}\otimes_{R\langle X\rangle}A\right)=0$, so $E_{p,q}^{2}=0$. For any $q\geq 0$, the short exact sequence
$$\textstyle 0\rightarrow \Nor_{A}(\mathfrak{K}^{q+1})\otimes_{A}M \rightarrow \Nor_{A}(\mathfrak{K}^{q})\otimes_{A}M \rightarrow \Nor_{A}\left(\frac{\mathfrak{K}^{q}}{\mathfrak{K}^{q+1}}\right)\otimes_{A}M \rightarrow 0$$
yields the following exact sequence:
$$\textstyle H_{p+q}\left(\Nor_{A}(\mathfrak{K}^{q})\otimes_{A}M\right) \rightarrow H_{p+q}\left(\Nor_{A}\left(\frac{\mathfrak{K}^{q}}{\mathfrak{K}^{q+1}}\right)\otimes_{A}M\right) \rightarrow H_{p+q-1}\left(\Nor_{A}(\mathfrak{K}^{q+1})\otimes_{A}M\right)$$
If $p<0$, then $p+q\leq q-1$ and $p+q-1\leq q$, so $H_{p+q}\left(\Nor_{A}(\mathfrak{K}^{q})\otimes_{A}M\right) =0= H_{p+q-1}\left(\Nor_{A}(\mathfrak{K}^{q+1})\otimes_{A}M\right)$, whence the above exact sequence implies that:
$$\textstyle E_{p,q}^{2}= H_{p+q}\left(\Nor_{A}\left(\Sym_{A}^{q}\left(\Omega_{R\langle X\rangle \mid R}\otimes_{R\langle X\rangle}A\right)\right)\otimes_{A}M\right) \cong H_{p+q}\left(\Nor_{A}\left(\frac{\mathfrak{K}^{q}}{\mathfrak{K}^{q+1}}\right)\otimes_{A}M\right) =0$$
Hence the spectral sequence is first quadrant.

(ii): Consider the filtration
$$\cdots \subseteq \Nor_{A}(\mathfrak{K}^{s+1}) \subseteq \Nor_{A}(\mathfrak{K}^{s}) \subseteq \cdots \subseteq \Nor_{A}(\mathfrak{K}^{2}) \subseteq \Nor_{A}(\mathfrak{K}) \subseteq \Nor_{A}\left(R\langle X\rangle \otimes_{R}A\right)$$
of the $A$-complex $\Nor_{A}\left(R\langle X\rangle \otimes_{R}A\right)$. For any $p,n\in \mathbb{Z}$, set:
$$F_{n}^{p}:= \left\{f\in \Hom_{A}\left(\Nor_{A}\left(R\langle X \rangle\otimes_{R}A\right),M\right)_{n}= \Hom_{A}\left(\Nor_{A}\left(R\langle X \rangle\otimes_{R}A\right)_{-n},M\right) \suchthat f|_{\Nor_{A}(\mathfrak{K}^{p})_{-n}}=0 \right\}$$
Moreover, define $\partial_{n}^{F^{p}}:F_{n}^{p}\rightarrow F_{n-1}^{p}$ by setting
\small
$$\partial_{n}^{F^{p}}(f):= \partial_{n}^{\Hom_{A}\left(\Nor_{A}\left(R\langle X \rangle\otimes_{R}A\right),M\right)}(f)= (-1)^{n+1}f\partial_{-n+1}^{\Nor_{A}\left(R\langle X \rangle\otimes_{R}A\right)}= (-1)^{n+1}\Hom_{A}\left(\partial_{-n+1}^{\Nor_{A}\left(R\langle X \rangle\otimes_{R}A\right)},M\right)(f)$$
\normalsize
for every $f\in F_{n}^{p}$. Then it is easily seen that $F^{p}$ is a subcomplex of the $A$-complex $\Hom_{A}\left(\Nor_{A}\left(R\langle X \rangle\otimes_{R}A\right),M\right)$. This gives a filtration
$$0= F^{0} \subseteq F^{1} \subseteq \cdots \subseteq F^{p-1} \subseteq F^{p} \subseteq \cdots \subseteq \Hom_{A}\left(\Nor_{A}\left(R\langle X \rangle\otimes_{R}A\right),M\right)$$
of the $A$-complex $\Hom_{A}\left(\Nor_{A}\left(R\langle X \rangle\otimes_{R}A\right),M\right)$.

We next show that $H_{-k}\left(\Hom_{A}\left(\Nor_{A}(\mathfrak{K}^{s}),M\right)\right)=0$ for every $s\geq 0$ and $k\leq s-1$. Fix $s\geq 0$. We show that $H_{-k}\left(\Hom_{A}\left(\Nor_{A}(\mathfrak{K}^{s}),M\right)\right) \cong \Hom_{A}\left(H_{k}\left(\Nor_{A}(\mathfrak{K}^{s})\right),M\right)$ for every $A$-module $M$ and $k\leq s-1$; see \cite[Chapter II, Lemme 19]{An}. For $k<0$, both sides are zero, so we may assume that $0\leq k\leq s-1$. We argue by induction on $k$. Let $k=0$. Then we have:
\begin{equation*}
\begin{split}
 H_{0}\left(\Hom_{A}\left(\Nor_{A}(\mathfrak{K}^{s}),M\right)\right) & = \Ker\left(\partial_{-1}^{\Hom_{A}\left(\Nor_{A}(\mathfrak{K}^{s}),M\right)}\right) = \Ker\left(\Hom_{A}\left(\partial_{1}^{\Nor_{A}(\mathfrak{K}^{s})},M\right)\right) \\
 & \cong \Hom_{A}\left(\Coker\left(\partial_{1}^{\Nor_{A}(\mathfrak{K}^{s})}\right),M\right) = \Hom_{A}\left(H_{0}\left(\Nor_{A}(\mathfrak{K}^{s})\right),M\right)
\end{split}
\end{equation*}
Now suppose that $k\geq 1$, and the result holds for $k-1$. Let $0\rightarrow N' \rightarrow N \rightarrow N'' \rightarrow 0$ be a short exact sequence of $A$-modules. Since $\Nor_{A}(\mathfrak{K}^{s})$ is an $A$-complex of projective modules, by \cite[Example 2.3.18]{CFH}, we get the short exact sequence
$$0\rightarrow \Hom_{A}\left(\Nor_{A}(\mathfrak{K}^{s}),N'\right) \rightarrow \Hom_{A}\left(\Nor_{A}(\mathfrak{K}^{s}),N\right) \rightarrow \Hom_{A}\left(\Nor_{A}(\mathfrak{K}^{s}),N''\right) \rightarrow 0$$
of $A$-complexes, which gives the following exact sequence:
$$H_{-k+1}\left(\Hom_{A}\left(\Nor_{A}(\mathfrak{K}^{s}),N''\right)\right) \rightarrow H_{-k}\left(\Hom_{A}\left(\Nor_{A}(\mathfrak{K}^{s}),N'\right)\right) \rightarrow H_{-k}\left(\Hom_{A}\left(\Nor_{A}(\mathfrak{K}^{s}),N\right)\right) $$$$ \rightarrow H_{-k}\left(\Hom_{A}\left(\Nor_{A}(\mathfrak{K}^{s}),N''\right)\right)$$
By the induction hypothesis, $H_{-k+1}\left(\Hom_{A}\left(\Nor_{A}(\mathfrak{K}^{s}),N''\right)\right) \cong \Hom_{A}\left(H_{k-1}\left(\Nor_{A}(\mathfrak{K}^{s})\right),N''\right)$. Since $k-1< k \leq s-1$, Theorem \ref{03.6} implies that $H_{k-1}\left(\Nor_{A}(\mathfrak{K}^{s})\right)= \pi_{k-1}(\mathfrak{K}^{s})=0$, so $H_{-k+1}\left(\Hom_{A}\left(\Nor_{A}(\mathfrak{K}^{s}),N''\right)\right)=0$. Therefore, the above exact sequence turns into the following exact sequence:
$$0 \rightarrow H_{-k}\left(\Hom_{A}\left(\Nor_{A}(\mathfrak{K}^{s}),N'\right)\right) \rightarrow H_{-k}\left(\Hom_{A}\left(\Nor_{A}(\mathfrak{K}^{s}),N\right)\right) \rightarrow H_{-k}\left(\Hom_{A}\left(\Nor_{A}(\mathfrak{K}^{s}),N''\right)\right)$$
This in particular shows that the functor $H_{-k}\left(\Hom_{A}\left(\Nor_{A}(\mathfrak{K}^{s}),-\right)\right):\mathcal{M}\mathpzc{od}(A) \rightarrow \mathcal{M}\mathpzc{od}(A)$ is left exact. On the other hand, the contravariant functor $\Hom_{A}(-,M):\mathcal{M}\mathpzc{od}(A) \rightarrow \mathcal{M}\mathpzc{od}(A)$ is left exact, so there is a natural $A$-homomorphism $\chi_{M}:H_{-k}\left(\Hom_{A}\left(\Nor_{A}(\mathfrak{K}^{s}),M\right)\right) \rightarrow \Hom_{A}\left(H_{k}\left(\Nor_{A}(\mathfrak{K}^{s})\right),M\right)$. Moreover, if $I$ is an injective $A$-module, then the functor $\Hom_{A}(-,I):\mathcal{M}\mathpzc{od}(A) \rightarrow \mathcal{M}\mathpzc{od}(A)$ is exact, so we conclude that $\chi_{I}$ is an isomorphism; see \cite[2.2.19]{CFH}. Now let $0\rightarrow M \rightarrow I_{0}\rightarrow I_{-1}$ be an exact sequence in which $I_{0}$ and $I_{-1}$ are injective $A$-modules. Then we get the following commutative diagram with exact rows:
\small
\begin{equation*}
\begin{tikzcd}
  0 \arrow{r} &
  H_{-k}\left(\Hom_{A}\left(\Nor_{A}(\mathfrak{K}^{s}),M\right)\right) \arrow{r} \arrow{d}[swap]{\chi_{M}} & H_{-k}\left(\Hom_{A}\left(\Nor_{A}(\mathfrak{K}^{s}),I_{0}\right)\right) \arrow{r} \arrow{d}{\cong}[swap]{\chi_{I_{0}}} & H_{-k}\left(\Hom_{A}\left(\Nor_{A}(\mathfrak{K}^{s}),I_{-1}\right)\right) \arrow{d}[swap]{\chi_{I_{-1}}}
  \\
  0 \arrow{r} &
  \Hom_{A}\left(H_{k}\left(\Nor_{A}(\mathfrak{K}^{s})\right),M\right) \arrow{r} &
  \Hom_{A}\left(H_{k}\left(\Nor_{A}(\mathfrak{K}^{s})\right),I_{0}\right) \arrow{r} &
  \Hom_{A}\left(H_{k}\left(\Nor_{A}(\mathfrak{K}^{s})\right),I_{-1}\right)
\end{tikzcd}
\end{equation*}
\normalsize
It follows that $\chi_{M}$ is an isomorphism. Now since $k\leq s-1$, Theorem \ref{03.6} implies that $H_{k}\left(\Nor_{A}(\mathfrak{K}^{s})\right)= \pi_{k}(\mathfrak{K}^{s})=0$, so $H_{-k}\left(\Hom_{A}\left(\Nor_{A}(\mathfrak{K}^{s}),M\right)\right)=0$.

We have observed before that for any $s\geq 0$, $\frac{\mathfrak{K}^{s}}{\mathfrak{K}^{s+1}}$  is a simplicial $A$-module consisting of projective modules, so we conclude that $\Nor_{A}\left(\frac{\mathfrak{K}^{s}}{\mathfrak{K}^{s+1}}\right)$ is an $A$-complex of projective modules. In particular, $\Nor_{A}\left(\frac{\mathfrak{K}}{\mathfrak{K}^{2}}\right)$ and $\Nor_{A}\left(\frac{R\langle X\rangle\otimes_{R}A}{\mathfrak{K}}\right)$ are $A$-complexes of projective modules. Therefore, the short exact sequence
$$\textstyle 0\rightarrow \Nor_{A}\left(\frac{\mathfrak{K}}{\mathfrak{K}^{2}}\right) \rightarrow \Nor_{A}\left(\frac{R\langle X\rangle\otimes_{R}A}{\mathfrak{K}^{2}}\right) \rightarrow \Nor_{A}\left(\frac{R\langle X\rangle\otimes_{R}A}{\mathfrak{K}}\right) \rightarrow 0$$
of $A$-complexes is degreewise split, implying that $\Nor_{A}\left(\frac{R\langle X\rangle\otimes_{R}A}{\mathfrak{K}^{2}}\right)$ is an $A$-complexes of projective module. Similarly, considering the degreewise split short exact sequence
$$\textstyle 0\rightarrow \Nor_{A}\left(\frac{\mathfrak{K}^{2}}{\mathfrak{K}^{3}}\right) \rightarrow \Nor_{A}\left(\frac{R\langle X\rangle\otimes_{R}A}{\mathfrak{K}^{3}}\right) \rightarrow \Nor_{A}\left(\frac{R\langle X\rangle\otimes_{R}A}{\mathfrak{K}^{2}}\right) \rightarrow 0$$
of $A$-complexes, we deduce that $\Nor_{A}\left(\frac{R\langle X\rangle\otimes_{R}A}{\mathfrak{K}^{3}}\right)$ is an $A$-complexes of projective module. Continuing in this manner, we infer that $\frac{\Nor_{A}\left(R\langle X\rangle\otimes_{R}A\right)}{\Nor_{A}(\mathfrak{K}^{p})} \cong \Nor_{A}\left(\frac{R\langle X\rangle\otimes_{R}A}{\mathfrak{K}^{p}}\right)$ is an $A$-complex of projective module for every $p\geq 0$. Now let $p\geq 0$. The short exact sequence
$$\textstyle 0\rightarrow \Nor_{A}(\mathfrak{K}^{p}) \xrightarrow{\kappa^{p}} \Nor_{A}\left(R\langle X\rangle\otimes_{R}A\right) \xrightarrow{\varpi^{p}} \frac{\Nor_{A}\left(R\langle X\rangle\otimes_{R}A\right)}{\Nor_{A}(\mathfrak{K}^{p})} \rightarrow 0$$
of $A$-complexes, in which $\kappa^{p}$ is the inclusion morphism and $\varpi^{p}$ is the quotient morphism, is degreewise split. Therefore, it yields the following degreewise split short exact sequence of $A$-complexes:
\footnotesize
$$\textstyle 0\rightarrow \Hom_{A}\left(\frac{\Nor_{A}\left(R\langle X\rangle\otimes_{R}A\right)}{\Nor_{A}(\mathfrak{K}^{p})},M\right) \xrightarrow{\Hom_{A}(\varpi^{p},M)} \Hom_{A}\left(\Nor_{A}\left(R\langle X\rangle\otimes_{R}A\right),M\right) \xrightarrow{\Hom_{A}(\kappa^{p},M)} \Hom_{A}\left(\Nor_{A}(\mathfrak{K}^{p}),M\right) \rightarrow 0$$
\normalsize
In particular, the morphism $\Hom_{A}(\kappa^{p},M)$ of $A$-complexes is surjective. Furthermore, we have for every $n\in \mathbb{Z}$:
\begin{equation*}
\begin{split}
 \Ker\left(\Hom_{A}(\kappa^{p},M)\right)_{n} & = \Ker\left(\Hom_{A}(\kappa^{p},M)_{n}\right) \\
 & = \Ker\left(\Hom_{A}(\kappa^{p}_{-n},M)\right) \\
 & = \left\{f\in \Hom_{A}\left(\Nor_{A}(R\langle X\rangle\otimes_{R}A)_{-n},M\right) \suchthat f\kappa^{p}_{-n}= \Hom_{A}(\kappa^{p}_{-n},M)(f)=0 \right\} \\
 & = \left\{f\in \Hom_{A}\left(\Nor_{A}(R\langle X\rangle\otimes_{R}A)_{-n},M\right) \suchthat f|_{\Nor(\mathfrak{K}^{p})_{-n}}=0 \right\} \\
 & = F_{n}^{p}
\end{split}
\end{equation*}
Thus $\Ker\left(\Hom_{A}(\kappa^{p},M)\right)= F^{p}$. Besides, the above short exact sequence implies that the induced morphism
$$\textstyle \overline{\Hom_{A}(\varpi^{p},M)}: \Hom_{A}\left(\frac{\Nor_{A}\left(R\langle X\rangle\otimes_{R}A\right)}{\Nor_{A}(\mathfrak{K}^{p})},M\right) \rightarrow \im\left(\Hom_{A}(\varpi^{p},M)\right) = \Ker\left(\Hom_{A}(\kappa^{p},M)\right) = F^{p}$$
is an isomorphism. Now consider the short exact sequence
$$0\rightarrow F^{p} \xrightarrow{\iota^{p}} \Hom_{A}\left(\Nor_{A}\left(R\langle X\rangle\otimes_{R}A\right),M\right) \xrightarrow{\Hom_{A}(\kappa^{p},M)} \Hom_{A}\left(\Nor_{A}(\mathfrak{K}^{p}),M\right) \rightarrow 0$$
of $A$-complexes in which $\iota^{p}$ is the inclusion morphism. We showed that $H_{-k}\left(\Hom_{A}\left(\Nor_{A}(\mathfrak{K}^{s}),M\right)\right)=0$ for every $s\geq 0$ and $k\leq s-1$. Thus we get the following exact sequence for every $p\geq -n+1$:
$$0= H_{n+1}\left(\Hom_{A}\left(\Nor_{A}(\mathfrak{K}^{p}),M\right)\right) \rightarrow H_{n}(F^{p}) \xrightarrow{H_{n}(\iota^{p})} H_{n}\left(\Hom_{A}\left(\Nor_{A}(R\langle X\rangle\otimes_{R}A),M\right)\right) $$$$ \rightarrow H_{n}\left(\Hom_{A}\left(\Nor_{A}(\mathfrak{K}^{p}),M\right)\right)=0$$
As a result, $H_{n}(\iota^{p})$ is an isomorphism for every $p\geq -n+1$. Thus Corollary \ref{02.3} implies that there is a spectral sequence as follows:
$$\textstyle E_{p,q}^{1}=H_{p+q}\left(\frac{F^{p}}{F^{p-1}}\right) \Rightarrow H_{p+q}\left(\Hom_{A}\left(\Nor_{A}\left(R\langle X\rangle\otimes_{R}A\right),M\right)\right)$$
The short exact sequence
$$\textstyle 0\rightarrow \frac{\Nor_{A}(\mathfrak{K}^{p-1})}{\Nor_{A}(\mathfrak{K}^{p})} \rightarrow \frac{\Nor_{A}(R\langle X\rangle\otimes_{R}A)}{\Nor_{A}(\mathfrak{K}^{p})} \xrightarrow{\pi^{p}} \frac{\Nor_{A}(R\langle X\rangle\otimes_{R}A)}{\Nor_{A}(\mathfrak{K}^{p-1})} \rightarrow 0$$
of $A$-complexes is degreewise split. Therefore, we obtain the following degreewise split short exact sequence of $A$-complexes:
\small
$$\textstyle 0\rightarrow \Hom_{A}\left(\frac{\Nor_{A}(R\langle X\rangle\otimes_{R}A)}{\Nor_{A}(\mathfrak{K}^{p-1})},M\right) \xrightarrow{\Hom_{A}(\pi^{p},M)} \Hom_{A}\left(\frac{\Nor_{A}(R\langle X\rangle\otimes_{R}A)}{\Nor_{A}(\mathfrak{K}^{p})},M\right) \rightarrow \Hom_{A}\left(\frac{\Nor_{A}(\mathfrak{K}^{p-1})}{\Nor_{A}(\mathfrak{K}^{p})},M\right) \rightarrow 0$$
\normalsize
Let $\lambda^{p}:F^{p-1} \rightarrow F^{p}$ be the inclusion morphism. Then the following diagram is commutative:
\footnotesize
\begin{equation*}
\begin{tikzcd}
  0 \arrow{r} &
  \Hom_{A}\left(\frac{\Nor_{A}(R\langle X\rangle\otimes_{R}A)}{\Nor_{A}(\mathfrak{K}^{p-1})},M\right) \arrow{r}{\Hom_{A}(\pi^{p},M)} \arrow{d}{\cong}[swap]{\overline{\Hom_{A}(\varpi^{p-1},M)}} & [3em] \Hom_{A}\left(\frac{\Nor_{A}(R\langle X\rangle\otimes_{R}A)}{\Nor_{A}(\mathfrak{K}^{p})},M\right) \arrow{r} \arrow{d}{\cong}[swap]{\overline{\Hom_{A}(\varpi^{p},M)}} & \Hom_{A}\left(\frac{\Nor_{A}(\mathfrak{K}^{p-1})}{\Nor_{A}(\mathfrak{K}^{p})},M\right) \arrow{r} & 0
  \\
  0 \arrow{r} &
  F^{p-1} \arrow{r}{\lambda^{p}} &
  F^{p} \arrow{r} &
  \frac{F^{p}}{F^{p-1}} \arrow{r} & 0
\end{tikzcd}
\end{equation*}
\normalsize
As a consequence, there is an isomorphism $\Hom_{A}\left(\frac{\Nor_{A}(\mathfrak{K}^{p-1})}{\Nor_{A}(\mathfrak{K}^{p})},M\right) \rightarrow \frac{F^{p}}{F^{p-1}}$ that makes the above diagram commutative. In particular, $\frac{F^{p}}{F^{p-1}} \cong \Hom_{A}\left(\frac{\Nor_{A}(\mathfrak{K}^{p-1})}{\Nor_{A}(\mathfrak{K}^{p})},M\right) \cong \Hom_{A}\left(\Nor_{A}\left(\frac{\mathfrak{K}^{p-1}}{\mathfrak{K}^{p}}\right),M\right)$. Hence we have the following spectral sequence:
$$\textstyle E_{p,q}^{1}= H_{p+q}\left(\Hom_{A}\left(\Nor_{A}\left(\frac{\mathfrak{K}^{p-1}}{\mathfrak{K}^{p}}\right),M\right)\right) \Rightarrow H_{p+q}\left(\Hom_{A}\left(\Nor_{A}\left(R\langle X\rangle\otimes_{R}A\right),M\right)\right)$$
We now reparametrize the spectral sequence as follows. Consider the following differential:
$$\textstyle H_{p+q}\left(\Hom_{A}\left(\Nor_{A}\left(\frac{\mathfrak{K}^{p-1}}{\mathfrak{K}^{p}}\right),M\right)\right)= E_{p,q}^{1} \xrightarrow{d_{p,q}^{1}} E_{p-1,q}^{1}= H_{p+q-1}\left(\Hom_{A}\left(\Nor_{A}\left(\frac{\mathfrak{K}^{p-2}}{\mathfrak{K}^{p-1}}\right),M\right)\right)$$
We find $p'$ and $q'$ such that $q'=-p+1$ and $p'+q'= p+q$. Thus $p'= 2p+q-1$ and $q'= -p+1$. Then $H_{p+q}\left(\Hom_{A}\left(\Nor_{A}\left(\frac{\mathfrak{K}^{p-1}}{\mathfrak{K}^{p}}\right),M\right)\right)$ is transformed into $H_{p'+q'}\left(\Hom_{A}\left(\Nor_{A}\left(\frac{\mathfrak{K}^{-q'}}{\mathfrak{K}^{-q'+1}}\right),M\right)\right)$, and $H_{p+q-1}\left(\Hom_{A}\left(\Nor_{A}\left(\frac{\mathfrak{K}^{p-2}}{\mathfrak{K}^{p-1}}\right),M\right)\right)$ is transformed into $H_{p'+q'-1}\left(\Hom_{A}\left(\Nor_{A}\left(\frac{\mathfrak{K}^{-q'-1}}{\mathfrak{K}^{-q'}}\right),M\right)\right)$. Since $d_{p,q}^{1}$ is induced by $\partial_{p+q}^{\Hom_{A}\left(\Nor_{A}\left(R\langle X \rangle \otimes_{R}A\right),M\right)}$, we see that $d_{p,q}^{1}$ is transformed into the following homomorphism which is induced by $\partial_{p'+q'}^{\Hom_{A}\left(\Nor_{A}\left(R\langle X \rangle \otimes_{R}A\right),M\right)}$:
$$\textstyle H_{p'+q'}\left(\Hom_{A}\left(\Nor_{A}\left(\frac{\mathfrak{K}^{-q'}}{\mathfrak{K}^{-q'+1}}\right),M\right)\right) \rightarrow H_{p'+q'-1}\left(\Hom_{A}\left(\Nor_{A}\left(\frac{\mathfrak{K}^{-q'-1}}{\mathfrak{K}^{-q'}}\right),M\right)\right)$$
Looking carefully at the indices, we notice that this homomorphism should be $d_{p',q'}^{2}$. In other words, we have:
\small
$$\textstyle H_{p'+q'}\left(\Hom_{A}\left(\Nor_{A}\left(\frac{\mathfrak{K}^{-q'}}{\mathfrak{K}^{-q'+1}}\right),M\right)\right)=E_{p',q'}^{2} \xrightarrow{d_{p',q'}^{2}} E_{p'-2,q'+1}^{2}=H_{p'+q'-1}\left(\Hom_{A}\left(\Nor_{A}\left(\frac{\mathfrak{K}^{-q'-1}}{\mathfrak{K}^{-q'}}\right),M\right)\right)$$
\normalsize
Reverting the indices back to $p$ and $q$, we get the following spectral sequence:
$$\textstyle E_{p,q}^{2}= H_{p+q}\left(\Hom_{A}\left(\Nor_{A}\left(\frac{\mathfrak{K}^{-q}}{\mathfrak{K}^{-q+1}}\right),M\right)\right) \Rightarrow H_{p+q}\left(\Hom_{A}\left(\Nor_{A}\left(R\langle X\rangle\otimes_{R}A\right),M\right)\right)$$
For any $q\leq 0$, there is an isomorphism $\frac{\mathfrak{K}^{-q}}{\mathfrak{K}^{-q+1}} \cong \Sym_{A}^{-q}\left(\frac{\mathfrak{K}}{\mathfrak{K}^{2}}\right) \cong \Sym_{A}^{-q}\left(\Omega_{R\langle X\rangle \mid R}\otimes_{R\langle X\rangle}A \right)$ of simplicial $A$-modules which gives the following isomorphism of $A$-complexes:
$$\textstyle \Hom_{A}\left(\Nor_{A}\left(\frac{\mathfrak{K}^{-q}}{\mathfrak{K}^{-q+1}}\right),M\right) \cong \Hom_{A}\left(\Nor_{A}\left(\Sym_{A}^{-q}\left(\Omega_{R\langle X\rangle \mid R}\otimes_{R\langle X\rangle}A \right)\right),M\right)$$
Moreover, $\Nor_{R}(\rho):\Nor_{R}\left(R\langle X\rangle\right) \rightarrow \Nor_{R}(A)=A$ is a projective resolution of $A$ as an $R$-module, so we have
\begin{equation*}
\begin{split}
 H_{p+q}\left(\Hom_{A}\left(\Nor_{A}\left(R\langle X\rangle\otimes_{R}A\right),M\right)\right) & \cong H_{p+q}\left(\Hom_{A}\left(\Nor_{R}\left(R\langle X\rangle\right)\otimes_{R}A,M\right)\right) \\
 & \cong H_{p+q}\left(\Hom_{R}\left(\Nor_{R}\left(R\langle X\rangle\right),\Hom_{A}(A,M)\right)\right) \\
 & \cong H_{p+q}\left(\Hom_{R}\left(\Nor_{R}\left(R\langle X\rangle\right),M\right)\right) \\
 & = \Ext_{R}^{-p-q}(A,M)
\end{split}
\end{equation*}
for every $p,q\in \mathbb{Z}$ with $p+q\leq 0$. Thus we get the following spectral sequence:
$$E_{p,q}^{2}= H_{p+q}\left(\Hom_{A}\left(\Nor_{A}\left(\Sym_{A}^{-q}\left(\Omega_{R\langle X\rangle \mid R}\otimes_{R\langle X\rangle}A\right)\right),M\right)\right) \Rightarrow \Ext_{R}^{-p-q}(A,M)$$

We finally show that the spectral sequence is third quadrant. If $q>0$, then $\Sym_{A}^{-q}\left(\Omega_{R\langle X\rangle \mid R}\otimes_{R\langle X\rangle}A\right)=0$, so $E_{p,q}^{2}=0$. For any $q\leq 0$, the degreewise split short exact sequence
$$\textstyle 0\rightarrow \Nor_{A}(\mathfrak{K}^{-q+1}) \rightarrow \Nor_{A}(\mathfrak{K}^{-q}) \rightarrow \Nor_{A}\left(\frac{\mathfrak{K}^{-q}}{\mathfrak{K}^{-q+1}}\right) \rightarrow 0$$
of $A$-complexes gives the degreewise split short exact sequence
$$\textstyle 0\rightarrow \Hom_{A}\left(\Nor_{A}\left(\frac{\mathfrak{K}^{-q}}{\mathfrak{K}^{-q+1}}\right),M\right) \rightarrow \Hom_{A}\left(\Nor_{A}(\mathfrak{K}^{-q}),M\right) \rightarrow \Hom_{A}\left(\Nor_{A}(\mathfrak{K}^{-q+1}),M\right) \rightarrow 0$$
of $A$-complexes which in turn yields the following exact sequence:
$$\textstyle H_{p+q+1}\left(\Hom_{A}\left(\Nor_{A}(\mathfrak{K}^{-q+1}),M\right)\right) \rightarrow H_{p+q}\left(\Hom_{A}\left(\Nor_{A}\left(\frac{\mathfrak{K}^{-q}}{\mathfrak{K}^{-q+1}}\right),M\right)\right) $$$$ \rightarrow H_{p+q}\left(\Hom_{A}\left(\Nor_{A}(\mathfrak{K}^{-q}),M\right)\right)$$
If $p>0$, then $-p-q-1 \leq -q$ and $-p-q \leq -q-1$, so $H_{p+q+1}\left(\Hom_{A}\left(\Nor_{A}(\mathfrak{K}^{-q+1}),M\right)\right) =0= H_{p+q}\left(\Hom_{A}\left(\Nor_{A}(\mathfrak{K}^{-q}),M\right)\right)$, whence the above exact sequence implies that:
\small
$$\textstyle E_{p,q}^{2}= H_{p+q}\left(\Hom_{A}\left(\Nor_{A}\left(\Sym_{A}^{-q}\left(\Omega_{R\langle X\rangle \mid R}\otimes_{R\langle X\rangle}A\right)\right),M\right)\right) \cong H_{p+q}\left(\Hom_{A}\left(\Nor_{A}\left(\frac{\mathfrak{K}^{-q}}{\mathfrak{K}^{-q+1}}\right),M\right)\right) =0$$
Hence the spectral sequence is third quadrant.
\normalsize
\end{proof}

\section{Five-Term Exact Sequences}

In this section, we present the five-term exact sequences resulting from the Quillen's fundamental spectral sequences. To this end, we need some generalizations of the five-term exact sequence of a general spectral sequence that seem to appear in \cite[Corollaries 10.32 and 10.34]{Ro}. However, the statements there are incorrect and no proof is presented. The correct statements with proofs are offered in \cite[Lemma 5.4]{DFT}. We include an even stronger statement here for the convenience of the reader.

\begin{lemma} \label{05.1}
Let $E^{2}_{p,q} \Rightarrow H_{p+q}$ be a spectral sequence. Then the following assertions hold:
\begin{enumerate}
\item[(i)] If the spectral sequence is first quadrant and there is an integer $n \geq 1$ such that $E^{2}_{p,q}=0$ for every $p\geq 2$ and $q \leq n-2$, then there is a five-term exact sequence as follows:
$$H_{n+1} \rightarrow E^{2}_{2,n-1} \rightarrow E^{2}_{0,n} \rightarrow H_{n} \rightarrow E^{2}_{1,n-1} \rightarrow 0$$
\item[(ii)] If the spectral sequence is third quadrant and there is an integer $n \geq 1$ such that $E^{2}_{p,q}=0$ for every $p\leq -2$ and $q \geq 2-n$, then there is a five-term exact sequence as follows:
$$0 \rightarrow E^{2}_{-1,1-n} \rightarrow H_{-n} \rightarrow E^{2}_{0,-n} \rightarrow E^{2}_{-2,1-n} \rightarrow H_{-n-1}$$
\end{enumerate}
\end{lemma}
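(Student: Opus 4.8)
The plan is to deduce both five-term sequences from the convergence data by using the two hypotheses to collapse the induced filtration on the abutment to (at most) two nonzero graded pieces, and then to identify each surviving $E^{\infty}$-term either with the corresponding $E^{2}$-term or with the kernel, respectively cokernel, of a single $d^{2}$-differential. The only conceptual input is that if $E^{2}_{p,q}=0$ then $E^{\infty}_{p,q}=0$ (immediate from the chain $B^{2}_{p,q}\subseteq\cdots\subseteq Z^{2}_{p,q}\subseteq E^{1}_{p,q}$ and $E^{\infty}_{p,q}=Z^{\infty}_{p,q}/B^{\infty}_{p,q}$), together with the standard edge-map bookkeeping.

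For (i): convergence gives a finite filtration $0=U^{-1}\subseteq U^{0}\subseteq\cdots\subseteq U^{n}=H_{n}$ with $U^{p}/U^{p-1}\cong E^{\infty}_{p,n-p}$ (the endpoints being forced by first-quadrantness). For $p\geq 2$ one has $n-p\leq n-2$, so the hypothesis kills $E^{2}_{p,n-p}$, hence $E^{\infty}_{p,n-p}=0$; therefore $U^{1}=H_{n}$ and $U^{0}=E^{\infty}_{0,n}$, giving a short exact sequence $0\to E^{\infty}_{0,n}\to H_{n}\to E^{\infty}_{1,n-1}\to 0$. The same reasoning applied to $H_{n+1}$ kills $E^{\infty}_{p,n+1-p}$ for $p\geq 3$, so its filtration satisfies $U^{2}=H_{n+1}$, yielding a surjection $H_{n+1}\twoheadrightarrow U^{2}/U^{1}\cong E^{\infty}_{2,n-1}$.

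Next I chase which differentials $d^{r}$ enter or leave each of the nodes $(0,n)$, $(1,n-1)$, $(2,n-1)$. Every $d^{r}$ out of $(1,n-1)$ lands in a node with negative first coordinate, hence in $0$, and every $d^{r}$ into $(1,n-1)$ issues from $(1+r,n-1-r)$ with $1+r\geq 3$ and $n-1-r\leq n-2$, hence from $0$ by the hypothesis; so $E^{\infty}_{1,n-1}=E^{2}_{1,n-1}$. Into $(0,n)$ the only possibly nonzero differential is $d^{2}\colon E^{2}_{2,n-1}\to E^{2}_{0,n}$ (for $r\geq 3$ the source $(r,n-r+1)$ is killed by the hypothesis), and all differentials out of $(0,n)$ vanish, so $E^{\infty}_{0,n}=\Coker\bigl(d^{2}\colon E^{2}_{2,n-1}\to E^{2}_{0,n}\bigr)$. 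Out of $(2,n-1)$ the only nonzero differential is that same $d^{2}$, and into $(2,n-1)$ everything is killed by the hypothesis, so $E^{\infty}_{2,n-1}=\Ker\bigl(d^{2}\colon E^{2}_{2,n-1}\to E^{2}_{0,n}\bigr)$. Splicing: take $H_{n+1}\twoheadrightarrow E^{\infty}_{2,n-1}=\Ker d^{2}\hookrightarrow E^{2}_{2,n-1}$, then $d^{2}$, then $E^{2}_{0,n}\twoheadrightarrow E^{\infty}_{0,n}=\Coker d^{2}\cong U^{0}\hookrightarrow H_{n}$, then $H_{n}\twoheadrightarrow U^{1}/U^{0}=E^{\infty}_{1,n-1}=E^{2}_{1,n-1}\to 0$; exactness at each of the five spots is then immediate from these kernel/cokernel descriptions and the short exact sequence $0\to U^{0}\to H_{n}\to U^{1}/U^{0}\to 0$. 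Part (ii) is the mirror image (equivalently, one may invoke the duality between third-quadrant homological and first-quadrant cohomological spectral sequences): the hypothesis collapses the filtration of $H_{-n}$ to $0\to E^{\infty}_{-1,1-n}\to H_{-n}\to E^{\infty}_{0,-n}\to 0$, while for $H_{-n-1}$ it kills $E^{\infty}_{p,-n-1-p}$ for $p\leq -3$ but \emph{not} for $p=-2$, so the bottom step of that filtration gives an injection $E^{\infty}_{-2,1-n}\hookrightarrow H_{-n-1}$; the node chase yields $E^{\infty}_{-1,1-n}=E^{2}_{-1,1-n}$, $E^{\infty}_{0,-n}=\Ker\bigl(d^{2}\colon E^{2}_{0,-n}\to E^{2}_{-2,1-n}\bigr)$ and $E^{\infty}_{-2,1-n}=\Coker\bigl(d^{2}\colon E^{2}_{0,-n}\to E^{2}_{-2,1-n}\bigr)$, and splicing $0\to E^{2}_{-1,1-n}\hookrightarrow H_{-n}\twoheadrightarrow E^{\infty}_{0,-n}\hookrightarrow E^{2}_{0,-n}\xrightarrow{d^{2}}E^{2}_{-2,1-n}\twoheadrightarrow E^{\infty}_{-2,1-n}\hookrightarrow H_{-n-1}$ gives the claimed sequence.

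The main obstacle is purely bookkeeping: one must be scrupulous about exactly which nodes $(p',q')$ feed a differential into or out of each of the nodes in play, and verify in every case that the relevant first/third-quadrant vanishing or the hypothesis $E^{2}_{p,q}=0$ actually applies — and in particular keep track of the one node ($p=-2$, $q=1-n$ in part (ii), which is precisely where the hypothesis barely fails) that survives into $H_{-n-1}$ and makes the last arrow of the sequence nonzero. Once the edge maps and their kernels and cokernels are pinned down there is no further difficulty.
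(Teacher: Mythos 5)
Your proof is correct, self-contained, and uses the standard spectral-sequence bookkeeping: collapse the filtration on the abutment using the vanishing hypothesis, identify the surviving $E^{\infty}$-terms as the kernel and cokernel of the single live $d^{2}$, and splice. The paper itself does not prove this lemma but simply cites \cite[Lemma 5.4]{DFT} (noting that the weaker hypothesis $p\geq 2$, respectively $p\leq -2$, suffices), so you have done more work than the paper does; your argument does indeed use only the weakened hypothesis, which is what the remark in the paper's proof is flagging. One small slip in the bookkeeping for part (i): a differential $d^{r}$ landing at $(1,n-1)$ issues from $(1+r,\,n-r)$, not $(1+r,\,n-1-r)$; the conclusion is unaffected since $n-r\leq n-2$ for $r\geq 2$, but the coordinate should be corrected. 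Everything else — the identification of $U^{0}=E^{\infty}_{0,n}$, $U^{1}=H_{n}$, the surjection $H_{n+1}\twoheadrightarrow E^{\infty}_{2,n-1}$, the observation that $E^{\infty}_{p,q}=0$ whenever $E^{2}_{p,q}=0$ because $E^{\infty}_{p,q}$ is a subquotient of $E^{2}_{p,q}$, and the mirror argument for (ii) with the correct treatment of the surviving node $(-2,1-n)$ — is accurate.
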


\begin{proof}
For a proof, refer to \cite[Lemma 5.4]{DFT}. However, note that in the statement of \cite[Lemma 5.4 (i)]{DFT}, the vanishing $E^{2}_{p,q}=0$ is assumed for every $q \leq n-2$. But the proof shows that the weaker assumption that $E^{2}_{p,q}=0$ for every $p\geq 2$ and $q \leq n-2$ suffices. The same holds for \cite[Lemma 5.4 (ii)]{DFT}.
\end{proof}

The following lemma is well-known; see for example \cite[1.11 and 1.15]{DP} or \cite[Lemma 4.3 and Theorem 5.6]{Do}. However, we include a proof here based on the ideas of the latter reference for the sake of completeness.

\begin{lemma} \label{05.2}
Let $R$ and $S$ be two rings, and $\mathcal{F}:\mathpzc{s}\mathcal{M}\mathpzc{od}(R)\rightarrow \mathpzc{s}\mathcal{M}\mathpzc{od}(S)$ a covariant functor that is extended from a preadditive covariant functor $\mathcal{M}\mathpzc{od}(R)\rightarrow \mathcal{M}\mathpzc{od}(S)$. If $f,g:M\rightarrow N$ are two morphisms in $\mathpzc{s}\mathcal{M}\mathpzc{od}(R)$ such that $f\sim g$, $M$ is cofibrant in $\mathpzc{s}\mathcal{M}\mathpzc{od}(R)$, and $\mathcal{F}(M)$ is cofibrant in $\mathpzc{s}\mathcal{M}\mathpzc{od}(S)$, then $\mathcal{F}(f)\sim \mathcal{F}(g)$.
\end{lemma}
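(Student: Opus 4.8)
The plan is to model the homotopy $f\sim g$ by the standard interval object in $\mathpzc{s}\mathcal{M}\mathpzc{od}(R)$ and then transport it through $\mathcal{F}$, using crucially that a \emph{preadditive} functor preserves finite direct sums. Let $\mathbb{Z}\Delta^{1}$ denote the free simplicial abelian group on the standard $1$-simplex, and for a simplicial $R$-module $A$ put $\Cyl(A):=A\otimes_{\mathbb{Z}}\mathbb{Z}\Delta^{1}$, equipped with the endpoint inclusions $j_{0},j_{1}:A\to\Cyl(A)$ induced by $\partial\Delta^{1}\hookrightarrow\Delta^{1}$ and the collapse $\sigma:\Cyl(A)\to A$ induced by $\Delta^{1}\to\Delta^{0}$. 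A standard fact about the simplicial model category $\mathpzc{s}\mathcal{M}\mathpzc{od}(R)$ (the pushout-product axiom applied to $\varnothing\to A$ and $\partial\Delta^{1}\hookrightarrow\Delta^{1}$) is that when $A$ is cofibrant the induced map $A\oplus A\to\Cyl(A)$ is a cofibration while $\sigma$ is a weak equivalence, so $\Cyl(A)$ is a cylinder object for $A$. Since $M$ is cofibrant and every simplicial module is fibrant, and since the homotopy relation $\sim$ on maps out of a cofibrant object is detected by any fixed good cylinder object, the hypothesis $f\sim g$ furnishes a morphism $H:\Cyl(M)\to N$ of simplicial $R$-modules with $Hj_{0}=f$ and $Hj_{1}=g$.

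The second and main step is to establish a natural isomorphism $\mathcal{F}\big(\Cyl(M)\big)\cong\Cyl\big(\mathcal{F}(M)\big)$ of simplicial $S$-modules carrying $\mathcal{F}(j_{0}),\mathcal{F}(j_{1}),\mathcal{F}(\sigma)$ to $j_{0},j_{1},\sigma$. In each degree $n$ the group $(\mathbb{Z}\Delta^{1})_{n}$ is finitely generated free, so $\Cyl(M)_{n}=\bigoplus_{x\in(\Delta^{1})_{n}}M_{n}$ is a \emph{finite} biproduct; a preadditive functor sends a finite biproduct diagram to a finite biproduct diagram (it preserves zero morphisms and identities, and from additivity on Hom-groups it preserves the defining relation $\sum_{x}\iota_{x}p_{x}=1$), whence $\mathcal{F}(\Cyl(M))_{n}\cong\bigoplus_{x}\mathcal{F}(M)_{n}=\Cyl(\mathcal{F}(M))_{n}$. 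One must then check that these degreewise isomorphisms are compatible with the face and degeneracy operators of $\Cyl(M)$: each such operator is a finite sum of composites of a structure operator of $M$ with coordinate inclusions and projections of the biproducts---genuine sums occur here because distinct $1$-simplices can restrict to the same face---and preadditivity, as opposed to mere functoriality, is exactly what makes $\mathcal{F}$ carry these operators to the corresponding operators of $\Cyl(\mathcal{F}(M))$. I expect this bookkeeping to be the technical heart of the argument; the classical content underlying it is recorded in \cite[1.11 and 1.15]{DP} and \cite[Lemma 4.3 and Theorem 5.6]{Do}.

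With these two steps in hand the conclusion is immediate. Applying $\mathcal{F}$ to $H$ and composing with the isomorphism of the second step produces a morphism $\widetilde{H}:\Cyl(\mathcal{F}(M))\to\mathcal{F}(N)$ with $\widetilde{H}j_{0}=\mathcal{F}(f)$ and $\widetilde{H}j_{1}=\mathcal{F}(g)$. Since $\mathcal{F}(M)$ is cofibrant by hypothesis, the first step---now applied in $\mathpzc{s}\mathcal{M}\mathpzc{od}(S)$---shows that $\Cyl(\mathcal{F}(M))$ is a genuine cylinder object for $\mathcal{F}(M)$, so $\widetilde{H}$ is a left homotopy and hence $\mathcal{F}(f)\sim\mathcal{F}(g)$, as desired. (An alternative, more elementary route is to extract explicit simplicial homotopy operators $h_{i}:M_{n}\to N_{n+1}$ witnessing $f\sim g$ and apply the underlying module functor to them termwise, since the simplicial homotopy identities are equalities of composites with no sums; this avoids the cylinder machinery altogether but obscures the role of the two cofibrancy hypotheses that the argument above makes transparent.)
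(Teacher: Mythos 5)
Your proposal follows essentially the same route as the paper's proof: realize the left homotopy $f\sim g$ on a cylinder object that is in each simplicial degree a \emph{finite} biproduct of copies of $M_n$, use preadditivity to push that biproduct structure through $\mathcal{F}$, and then apply $\mathcal{F}$ to the homotopy. Your $\Cyl(M)$ and the paper's $M^{(\Delta^1)}$ are the same up to the identification $M_n^{(\Delta^1_n)}=M_n^{\,n+2}$, and the use of Hirschhorn's results to pass between left and right homotopy under the given cofibrancy/fibrancy hypotheses is the same.

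There are two points to flag. First, a small difference in what is asserted: you claim a full natural isomorphism $\mathcal{F}\bigl(\Cyl(M)\bigr)\cong \Cyl\bigl(\mathcal{F}(M)\bigr)$ compatible with $j_0,j_1,\sigma$. The paper constructs only the one-directional comparison morphism $\theta\colon \mathcal{F}(M)^{(\Delta^1)}\to \mathcal{F}\bigl(M^{(\Delta^1)}\bigr)$ given degreewise by $\theta_n(z_1,\dots,z_{n+2})=\sum_j\mathcal{F}(\lambda_{n,j})(z_j)$, and checks its compatibility only with the two end inclusions; the collapse $\sigma$ is never needed because the left-homotopy witness is obtained directly as $\mathcal{F}(\phi)\theta$. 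Your stronger claim is correct under preadditivity, but the extra half of the isomorphism (and the compatibility with $\sigma$) is superfluous.

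Second, and more seriously: you identify the verification that the degreewise isomorphisms commute with the face and degeneracy operators as \emph{the} technical heart and then decline to carry it out. That computation --- showing $d_{n,i}^{\mathcal{F}(M^{(\Delta^1)})}\theta_n=\theta_{n-1}d_{n,i}^{\mathcal{F}(M)^{(\Delta^1)}}$ and the analogous degeneracy identity, which in turn requires unwinding $(d_{n,i}^M)^{(d_{n,i}^{\Delta^1})}\lambda_{n,j}$ entry by entry --- is precisely what occupies most of the paper's proof. As written, your argument is a correct outline but not a proof; the missing step is not routine bookkeeping in the sense that one can wave it through, because it is exactly where preadditivity (rather than mere functoriality) enters. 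Incidentally, the \enquote{alternative elementary route} you mention in parentheses, namely pushing the explicit homotopy operators $h_i$ through $\mathcal{F}$ and noting that the simplicial homotopy identities involve no sums, is a substantively different argument worth taking seriously: it sidesteps the biproduct analysis entirely, and the cofibrancy hypotheses still enter in the same two places (to extract the $h_i$ from $f\sim g$ out of cofibrant $M$, and to read $\mathcal{F}(h_i)$ back as a genuine homotopy into a good cylinder on cofibrant $\mathcal{F}(M)$), so it does not \enquote{obscure} their role as you suggest.
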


\begin{proof}
We consider the standard model structure on simplicial modules; see \cite[Theorem 4.15]{Fa}. For the notions and notations of homotopy used here, refer to \cite[Definitions 2.2 and 2.3]{Fa}. Since $f\sim g$, we have $f\sim_{r}g$. But every object of $\mathpzc{s}\mathcal{M}\mathpzc{od}(R)$ including $N$ is fibrant, so by \cite[Proposition 7.4.7]{Hi}, we conclude that $f\sim_{l}g$ using any cylinder object for $M$. In particular, as $M$ is cofibrant, we can use the cylinder object $M^{(\Delta^{1})}$ for $M$; see \cite[Proposition 6.5]{Fa}. Thus there is a commutative diagram
\begin{equation*}
  \begin{tikzcd}
  M\oplus M \arrow{r}{\nabla^{M}}\arrow{d}[swap]{\kappa^{M}}
  & M
  \\
  M^{(\Delta^{1})} \arrow{ru}[swap]{\xi^{M}}
\end{tikzcd}
\end{equation*}

\noindent
in which $\kappa^{M}$ is a cofibration and $\xi^{M}$ is a weak equivalence, such that $f$ and $g$ factor through $M^{(\Delta^{1})}$ as
\begin{equation*}
  \begin{tikzcd}
  M \arrow{r}{f} \arrow{rd}[swap]{\kappa^{M}\iota_{1}^{M}}
  & N
  & M \arrow{l}[swap]{g} \arrow{ld}{\kappa^{M}\iota_{2}^{M}}
  \\
  & M^{(\Delta^{1})} \arrow{u}{\phi}
\end{tikzcd}
\end{equation*}

\noindent
in which $\iota_{1}^{M}:M\rightarrow M\oplus M$ and $\iota_{2}^{M}:M\rightarrow M\oplus M$ are canonical injections. Now let $n\geq 0$. Then we have:
$$\Delta_{n}^{1}= \left\{(\alpha_{0},\alpha_{1},\ldots , \alpha_{n})\in \mathbb{N}^{n+1} \suchthat 0\leq \alpha_{0}\leq \alpha_{1}\leq \cdots \leq \alpha_{n}\leq 1 \right\}$$
We note that $|\Delta_{n}^{1}|= \binom{1+n+1}{1}= n+2$, so $\left(M^{(\Delta^{1})}\right)_{n}= M_{n}^{(\Delta_{n}^{1})}= M_{n}^{n+2}$; see \cite[Example 4.3]{Fa}. A careful inspection reveals that $\kappa_{n}^{M}(x,y)=(x,\underbrace{0,\ldots,0}_{n \textrm{ times}},y)$ for every $x,y\in M_{n}$, $\xi_{n}^{M}(x_{1},\ldots,x_{n+2})= x_{1}+\cdots +x_{n+2}$ for every $x_{1},\ldots ,x_{n+2}\in M_{n}$, and $\nabla_{n}^{M}(x,y)=x+y$ for every $x,y\in M_{n}$. Applying the functor $\mathcal{F}$ to the above diagram, we get the following commutative diagram in $\mathpzc{s}\mathcal{M}\mathpzc{od}(S)$:
\begin{equation*}
  \begin{tikzcd}
  \mathcal{F}(M) \arrow{r}{\mathcal{F}(f)} \arrow{rd}[swap]{\mathcal{F}\left(\kappa^{M}\iota_{1}^{M}\right)}
  & \mathcal{F}(N)
  & \mathcal{F}(M) \arrow{l}[swap]{\mathcal{F}(g)} \arrow{ld}{\mathcal{F}\left(\kappa^{M}\iota_{2}^{M}\right)}
  \\
  & \mathcal{F}\left(M^{(\Delta^{1})}\right) \arrow{u}{\mathcal{F}(\phi)}
\end{tikzcd}
\end{equation*}

\noindent
We next define a morphism $\theta:\mathcal{F}(M)^{(\Delta^{1})}\rightarrow \mathcal{F}\left(M^{(\Delta^{1})}\right)$ of simplicial $S$-modules as follows. Let $n\geq 0$. Let $\lambda_{n,j}:M_{n}\rightarrow M_{n}^{n+2}$ be the canonical injection into the $j$th component for every $0\leq j\leq n+2$. We define an $S$-homomorphism
$$\theta_{n}:\mathcal{F}(M_{n})^{(\Delta^{1}_{n})}=\mathcal{F}(M_{n})^{n+2} \rightarrow \mathcal{F}\left(M_{n}^{n+2}\right)= \mathcal{F}\left(M_{n}^{(\Delta^{1}_{n})}\right)$$
by setting
$$\theta_{n}(z_{1},...,z_{n+2})= \sum_{j=1}^{n+2}\mathcal{F}(\lambda_{n,j})(z_{j})$$
for every $z_{1},...,z_{n+2}\in \mathcal{F}(M_{n})$. Then the following diagram is commutative for every $0\leq i\leq n$:
\begin{equation*}
  \begin{tikzcd}[column sep=4.5em,row sep=2em]
  \mathcal{F}(M_{n-1})^{n+1} \arrow{r}{s_{n-1,i}^{\mathcal{F}(M)^{(\Delta^{1})}}} \arrow{d}{\theta_{n-1}}
  & \mathcal{F}(M_{n})^{n+2} \arrow{r}{d_{n,i}^{\mathcal{F}(M)^{(\Delta^{1})}}} \arrow{d}{\theta_{n}}
  & \mathcal{F}(M_{n-1})^{n+1} \arrow{d}{\theta_{n-1}}
  \\
  \mathcal{F}(M_{n-1}^{n+1}) \arrow{r}{s_{n-1,i}^{\mathcal{F}\left(M^{(\Delta^{1})}\right)}}
  & \mathcal{F}(M_{n}^{n+2}) \arrow{r}{d_{n,i}^{\mathcal{F}\left(M^{(\Delta^{1})}\right)}}
  & \mathcal{F}(M_{n-1}^{n+1})
\end{tikzcd}
\end{equation*}

\noindent
Indeed, we have for every $0\leq i\leq n$ and $z_{1},...,z_{n+2}\in \mathcal{F}(M_{n})$:
\begin{equation*}
\begin{split}
 d_{n,i}^{\mathcal{F}\left(M^{(\Delta^{1})}\right)}\left(\theta_{n}(z_{1},...,z_{n+2})\right)
 & = \mathcal{F}\left(d_{n,i}^{M^{(\Delta^{1})}}\right)\left(\theta_{n}(z_{1},...,z_{n+2})\right) = \mathcal{F}\left(\left(d_{n,i}^{M}\right)^{\left(d_{n,i}^{\Delta^{1}}\right)}\right)\left(\sum_{j=1}^{n+2}\mathcal{F}\left(\lambda_{n,j}\right)(z_{j})\right) \\
 & = \sum_{j=1}^{n+2}\mathcal{F}\left(\left(d_{n,i}^{M}\right)^{\left(d_{n,i}^{\Delta^{1}}\right)}\right)\left(\mathcal{F}(\lambda_{n,j})(z_{j})\right) = \sum_{j=1}^{n+2}\mathcal{F}\left(\left(d_{n,i}^{M}\right)^{\left(d_{n,i}^{\Delta^{1}}\right)}\lambda_{n,j}\right)(z_{j})
\end{split}
\end{equation*}

\noindent
Let $1\leq j\leq n+2$, and $x\in M_{n}$. Then $\lambda_{n,j}$ embeds $x$ into the $j$th component, and $M_{n}^{\left(d_{n,i}^{\Delta^{1}}\right)}$ deletes the $i$th component. As a result, we see that:
\small
\label{eqn:damage piecewise}
 $$\left(d_{n,i}^{M}\right)^{\left(d_{n,i}^{\Delta^{1}}\right)}\left(\lambda_{n,j}(x)\right)= \left(d_{n,i}^{M}\right)^{n+1}\left(M_{n}^{\left(d_{n,i}^{\Delta^{1}}\right)}\left(\lambda_{n,j}(x)\right)\right)=
      \begin{cases}
        0 & \text{if } j=i \\
        (0,...,\underbrace{d_{n,i}^{M}(x)}_{j \textrm{th spot}},...,0)= \lambda_{n-1,j}\left(d_{n,i}^{M}(x)\right) & \text{if } j\neq i
      \end{cases}$$

\normalsize
\noindent
In other words, we have:
$$\left(d_{n,i}^{M}\right)^{\left(d_{n,i}^{\Delta^{1}}\right)}\lambda_{n,j}=
      \begin{cases}
        0 & \text{if } j=i \\
        \lambda_{n-1,j}d_{n,i}^{M} & \text{if } j\neq i
      \end{cases}$$

\noindent
Therefore, we get:
\small
$$d_{n,i}^{\mathcal{F}\left(M^{(\Delta^{1})}\right)}\left(\theta_{n}(z_{1},...,z_{n+2})\right)= \sum_{j=1}^{n+2}\mathcal{F}\left(\left(d_{n,i}^{M}\right)^{\left(d_{n,i}^{\Delta^{1}}\right)}\lambda_{n,j}\right)(z_{j})= \sum_{j=1}^{i-1}\mathcal{F}\left(\lambda_{n-1,j}d_{n,i}^{M}\right)(z_{j})+ \sum_{j=i+1}^{n+2}\mathcal{F}\left(\lambda_{n-1,j}d_{n,i}^{M}\right)(z_{j})$$
\normalsize
On the other hand, we have:
\begin{equation*}
\begin{split}
 \theta_{n-1}\left(d_{n,i}^{\mathcal{F}(M)^{(\Delta^{1})}}(z_{1},...,z_{n+2})\right)
 & = \theta_{n-1}\left(\mathcal{F}\left(d_{n,i}^{M}\right)^{\left(d_{n,i}^{\Delta^{1}}\right)}(z_{1},...,z_{n+2})\right) \\
 & = \theta_{n-1}\left(\mathcal{F}\left(d_{n,i}^{M}\right)^{n+1}\left(\mathcal{F}(M_{n})^{\left(d_{n,i}^{\Delta^{1}}\right)}(z_{1},...,z_{n+2})\right)\right) \\
 & = \theta_{n-1}\left(\mathcal{F}\left(d_{n,i}^{M}\right)^{n+1}\left(z_{1},...,z_{i-1},z_{i+1},...,z_{n+2}\right)\right) \\
 & = \theta_{n-1}\left(\mathcal{F}\left(d_{n,i}^{M}\right)(z_{1}),...,\mathcal{F}\left(d_{n,i}^{M}\right)(z_{i-1}), \mathcal{F}\left(d_{n,i}^{M}\right)(z_{i+1}),...,\mathcal{F}\left(d_{n,i}^{M}\right)(z_{n+2})\right) \\
 & = \sum_{j=1}^{i-1}\mathcal{F}\left(\lambda_{n-1,j}\right)\left(\mathcal{F}\left(d_{n,i}^{M}\right)(z_{j})\right) + \sum_{j=i+1}^{n+2}\mathcal{F}\left(\lambda_{n-1,j}\right)\left(\mathcal{F}\left(d_{n,i}^{M}\right)(z_{j})\right) \\
 & = \sum_{j=1}^{i-1}\mathcal{F}\left(\lambda_{n-1,j}d_{n,i}^{M}\right)(z_{j})+ \sum_{j=i+1}^{n+2}\mathcal{F}\left(\lambda_{n-1,j}d_{n,i}^{M}\right)(z_{j})
\end{split}
\end{equation*}

\noindent
As a consequence, we have $d_{n,i}^{\mathcal{F}\left(M^{(\Delta^{1})}\right)}\left(\theta_{n}(z_{1},...,z_{n+2})\right)= \theta_{n-1}\left(d_{n,i}^{\mathcal{F}(M)^{(\Delta^{1})}}(z_{1},...,z_{n+2})\right)$, so $d_{n,i}^{\mathcal{F}\left(M^{(\Delta^{1})}\right)}\theta_{n}= \theta_{n-1}d_{n,i}^{\mathcal{F}(M)^{(\Delta^{1})}}$. Similarly, one can verify that $s_{n-1,i}^{\mathcal{F}\left(M^{(\Delta^{1})}\right)}\theta_{n-1}= \theta_{n}s_{n-1,i}^{\mathcal{F}(M)^{(\Delta^{1})}}$. This shows that $\theta=(\theta_{n})_{n\geq 0}:\mathcal{F}(M)^{(\Delta^{1})}\rightarrow \mathcal{F}\left(M^{(\Delta^{1})}\right)$ is a morphism of simplicial $S$-module.

Considering the commutative diagram
\begin{equation*}
\begin{tikzcd}[column sep=4.5em,row sep=2.5em]
  \mathcal{F}(M)\oplus \mathcal{F}(M) \arrow{r}{\nabla^{\mathcal{F}(M)}} \arrow{d}[swap]{\kappa^{\mathcal{F}(M)}}
  & \mathcal{F}(M)
  \\
  \mathcal{F}(M)^{(\Delta^{1})} \arrow{ru}[swap]{\xi^{\mathcal{F}(M)}}
  &
\end{tikzcd}
\end{equation*}

\noindent
we get the following commutative diagram:
\begin{equation*}
  \begin{tikzcd}[column sep=7em, row sep=4em]
  \mathcal{F}(M) \arrow{r}{\mathcal{F}(f)} \arrow{rd}[swap]{\mathcal{F}\left(\kappa^{M}\iota_{1}^{M}\right)} \arrow[bend right, swap]{ddr}{\kappa^{\mathcal{F}(M)}\iota_{1}^{\mathcal{F}(M)}}
  & \mathcal{F}(N)
  & \mathcal{F}(M) \arrow{l}[swap]{\mathcal{F}(g)} \arrow{ld}{\mathcal{F}\left(\kappa^{M}\iota_{2}^{M}\right)} \arrow[bend left]{ddl}{\kappa^{\mathcal{F}(M)}\iota_{2}^{\mathcal{F}(M)}}
  \\
  & \mathcal{F}\left(M^{(\Delta^{1})}\right) \arrow{u}{\mathcal{F}(\phi)} \\
  & \mathcal{F}(M)^{(\Delta^{1})} \arrow{u}{\theta}
\end{tikzcd}
\end{equation*}

\noindent
To see this, let $n\geq 0$. Then we have for every $z\in \mathcal{F}(M_{n})$:
$$\theta_{n}\left(\kappa_{n}^{\mathcal{F}(M)}\left(\left(\iota_{1}^{\mathcal{F}(M)}\right)_{n}(z)\right)\right)= \theta_{n}\left(\kappa_{n}^{\mathcal{F}(M)}(z,0)\right)= \theta_{n}(z,\underbrace{0,...,0}_{n+1 \textrm{ times}})= \mathcal{F}\left(\lambda_{n,1}\right)(z)$$
It follows that $\theta_{n}\kappa_{n}^{\mathcal{F}(M)}\left(\iota_{1}^{\mathcal{F}(M)}\right)_{n}= \mathcal{F}\left(\lambda_{n,1}\right)$. However, we have for every $x\in M_{n}$:
$$\lambda_{n,1}(x)= (x,\underbrace{0,...,0}_{n+1 \textrm{ times}})= \kappa_{n}^{M}(x,0)= \kappa_{n}^{M}\left((\iota_{1}^{M})_{n}(x)\right)$$
Hence $\lambda_{n,1}=\kappa_{n}^{M}(\iota_{1}^{M})_{n}$, so we get $\theta_{n}\kappa_{n}^{\mathcal{F}(M)}\left(\iota_{1}^{\mathcal{F}(M)}\right)_{n}= \mathcal{F}\left(\kappa_{n}^{M}(\iota_{1}^{M})_{n}\right)$. Therefore, $\theta\kappa^{\mathcal{F}(M)}\iota_{1}^{\mathcal{F}(M)}= \mathcal{F}\left(\kappa^{M}\iota_{1}^{M}\right)$. Similarly, one can verify that $\theta\kappa^{\mathcal{F}(M)}\iota_{2}^{\mathcal{F}(M)}= \mathcal{F}\left(\kappa^{M}\iota_{2}^{M}\right)$. As a result, we get the following commutative diagram:
\begin{equation*}
  \begin{tikzcd}[row sep=3em, column sep=3em]
  \mathcal{F}(M) \arrow{r}{\mathcal{F}(f)} \arrow[swap]{dr}{\kappa^{\mathcal{F}(M)}\iota_{1}^{\mathcal{F}(M)}}
  & \mathcal{F}(N)
  & \mathcal{F}(M) \arrow{l}[swap]{\mathcal{F}(g)} \arrow{dl}{\kappa^{\mathcal{F}(M)}\iota_{2}^{\mathcal{F}(M)}}
  \\
  & \mathcal{F}(M)^{(\Delta^{1})} \arrow{u}{\mathcal{F}(\phi)\theta}
\end{tikzcd}
\end{equation*}

\noindent
This shows that $\mathcal{F}(f) \sim_{l} \mathcal{F}(g)$. But $\mathcal{F}(M)$ is cofibrant, so we deduce that $\mathcal{F}(f) \sim_{r} \mathcal{F}(g)$; see \cite[Proposition 7.4.5]{Hi}. Therefore, $\mathcal{F}(f) \sim \mathcal{F}(g)$.
\end{proof}

\begin{corollary} \label{05.3}
Let $R$ and $S$ be two rings, and $\mathcal{F}:\mathpzc{s}\mathcal{M}\mathpzc{od}(R)\rightarrow \mathpzc{s}\mathcal{M}\mathpzc{od}(S)$ a covariant functor that is extended from a preadditive covariant functor $\mathcal{M}\mathpzc{od}(R)\rightarrow \mathcal{M}\mathpzc{od}(S)$. If $f:M \rightarrow N$ is a homotopy equivalence in $\mathpzc{s}\mathcal{M}\mathpzc{od}(R)$, $M$ and $N$ are cofibrant in $\mathpzc{s}\mathcal{M}\mathpzc{od}(R)$, and $\mathcal{F}(M)$ and $\mathcal{F}(N)$ are cofibrant in $\mathpzc{s}\mathcal{M}\mathpzc{od}(S)$, then $\mathcal{F}(f):\mathcal{F}(M) \rightarrow \mathcal{F}(N)$ is a homotopy equivalence in $\mathpzc{s}\mathcal{M}\mathpzc{od}(S)$.
\end{corollary}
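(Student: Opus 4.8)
The plan is to reduce the statement to Lemma \ref{05.2} by a purely formal argument. First I would choose a homotopy inverse $g:N\rightarrow M$ of $f$, so that $gf\sim 1^{M}$ and $fg\sim 1^{N}$. Applying the functor $\mathcal{F}$ produces morphisms $\mathcal{F}(f):\mathcal{F}(M)\rightarrow \mathcal{F}(N)$ and $\mathcal{F}(g):\mathcal{F}(N)\rightarrow \mathcal{F}(M)$ in $\mathpzc{s}\mathcal{M}\mathpzc{od}(S)$, and functoriality gives $\mathcal{F}(g)\mathcal{F}(f)=\mathcal{F}(gf)$, $\mathcal{F}(f)\mathcal{F}(g)=\mathcal{F}(fg)$, $\mathcal{F}(1^{M})=1^{\mathcal{F}(M)}$, and $\mathcal{F}(1^{N})=1^{\mathcal{F}(N)}$. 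Note that the hypothesis on $\mathcal{F}$ is exactly the one required by Lemma \ref{05.2}.

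Next I would apply Lemma \ref{05.2} twice. For the pair of morphisms $gf,1^{M}:M\rightarrow M$, the hypotheses are satisfied: $gf\sim 1^{M}$, the common domain $M$ is cofibrant in $\mathpzc{s}\mathcal{M}\mathpzc{od}(R)$, and $\mathcal{F}(M)$ is cofibrant in $\mathpzc{s}\mathcal{M}\mathpzc{od}(S)$; hence $\mathcal{F}(gf)\sim \mathcal{F}(1^{M})$, that is, $\mathcal{F}(g)\mathcal{F}(f)\sim 1^{\mathcal{F}(M)}$. Symmetrically, for the pair $fg,1^{N}:N\rightarrow N$, using that $N$ is cofibrant in $\mathpzc{s}\mathcal{M}\mathpzc{od}(R)$ and $\mathcal{F}(N)$ is cofibrant in $\mathpzc{s}\mathcal{M}\mathpzc{od}(S)$, Lemma \ref{05.2} yields $\mathcal{F}(fg)\sim \mathcal{F}(1^{N})$, that is, $\mathcal{F}(f)\mathcal{F}(g)\sim 1^{\mathcal{F}(N)}$. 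These two homotopies exhibit $\mathcal{F}(g)$ as a homotopy inverse of $\mathcal{F}(f)$, so $\mathcal{F}(f)$ is a homotopy equivalence in $\mathpzc{s}\mathcal{M}\mathpzc{od}(S)$.

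There is essentially no obstacle beyond careful bookkeeping: the one point that must be checked is that Lemma \ref{05.2} is invoked with the correct cofibrant object as the common domain in each instance, namely $M$ in the first application and $N$ in the second. This is precisely why both $M,N$ and $\mathcal{F}(M),\mathcal{F}(N)$ are required to be cofibrant in the hypothesis, and once this is observed the proof is immediate.
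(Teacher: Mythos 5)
Your proof is correct and follows essentially the same argument as the paper: pick a homotopy inverse $g$, apply Lemma \ref{05.2} to the homotopies $gf\sim 1^{M}$ and $fg\sim 1^{N}$, and conclude. Your additional remark spelling out which cofibrant object serves as the domain in each application of Lemma \ref{05.2} is a helpful clarification but does not change the substance.
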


\begin{proof}
Since $f:M\rightarrow N$ is a homotopy equivalence, there is a morphism $g:N\rightarrow M$ such that $gf \sim 1^{M}$ and $fg \sim 1^{N}$. Now the previous proposition implies that $\mathcal{F}(g)\mathcal{F}(f)=\mathcal{F}(gf) \sim \mathcal{F}(1^{M})=1^{\mathcal{F}(M)}$ and $\mathcal{F}(f)\mathcal{F}(g)=\mathcal{F}(fg) \sim \mathcal{F}(1^{N})=1^{\mathcal{F}(N)}$. Hence $\mathcal{F}(f)$ is a homotopy equivalence.
\end{proof}

Now we are ready to handle the five-term exact sequences of the Quillen's fundamental spectral sequences.

\begin{theorem} \label{05.4}
Let $R$ be a ring, $A$ an $R$-algebra, and $M$ an $A$-module. Suppose that the natural $R$-algebra epimorphism $\mu_{A}:A\otimes_{R}A\rightarrow A$, given by $\mu_{A} (a\otimes b)=ab$ for every $a,b\in A$, is an isomorphism. Then the following assertions hold:
\begin{enumerate}
\item[(i)] $H_{0}^{AQ}(A|R;M) = 0 = H_{AQ}^{0}(A|R;M)$.

\item[(ii)] $H_{1}^{AQ}(A|R;M) \cong \Tor_{1}^{R}(A,M)$ and $H_{AQ}^{1}(A|R;M) \cong \Ext_{R}^{1}(A,M)$.

\item[(iii)] There are natural exact sequences of $A$-modules as follows:
$$\Tor_{3}^{R}(A,M)\rightarrow H_{3}^{AQ}(A|R;M) \rightarrow \textstyle\bigwedge_{A}^{2}\Tor_{1}^{R}(A,A)\otimes_{A}M \rightarrow \Tor_{2}^{R}(A,M) \rightarrow H_{2}^{AQ}(A|R;M) \rightarrow 0$$
and
$$0\rightarrow H_{AQ}^{2}(A|R;M)\rightarrow \Ext_{R}^{2}(A,M)\rightarrow \Hom_{A}\left(\textstyle\bigwedge_{A}^{2}\Tor_{1}^{R}(A,A),M\right)\rightarrow H_{AQ}^{3}(A|R;M)\rightarrow \Ext_{R}^{3}(A,M)$$
\end{enumerate}
\end{theorem}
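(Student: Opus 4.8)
The plan is to run the two fundamental spectral sequences of Theorem~\ref{04.2}, identify their low-degree $E^{2}$-entries, and then feed the outcome into the generalized five-term sequences of Lemma~\ref{05.1}. Write $L=\Omega_{R\langle X\rangle\mid R}\otimes_{R\langle X\rangle}A$, so that $\mathbb{L}^{A|R}=\Nor_{A}(L)$ by Remark~\ref{04.1}, and recall from the proof of Theorem~\ref{04.2} the isomorphisms $\Sym_{A}^{q}(L)\cong\mathfrak{K}^{q}/\mathfrak{K}^{q+1}$ of simplicial $A$-modules, in particular $L\cong\mathfrak{K}/\mathfrak{K}^{2}$. The outermost $E^{2}$-columns are immediate: since $\mathfrak{K}^{0}/\mathfrak{K}^{1}\cong A$ is a constant simplicial module, $E_{p,0}^{2}=0$ for $p\neq 0$ in both spectral sequences, while $E_{0,0}^{2}=M$ matches $\Tor_{0}^{R}(A,M)=A\otimes_{R}M\cong M$ (which holds precisely because $\mu_{A}$ is an isomorphism); and for the columns $q=1$ and $q=-1$ one reads off $E_{p,1}^{2}=H_{p+1}(\mathbb{L}^{A|R}\otimes_{A}M)=H_{p+1}^{AQ}(A|R;M)$ and $E_{p,-1}^{2}=H_{p-1}(\Hom_{A}(\mathbb{L}^{A|R},M))=H_{AQ}^{1-p}(A|R;M)$.

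With this in hand, parts (i) and (ii) are short. Since the homological spectral sequence is first quadrant, $H_{0}^{AQ}(A|R;M)=E_{-1,1}^{2}=0$, and dually $H_{AQ}^{0}(A|R;M)=E_{1,-1}^{2}=0$ by third-quadrant vanishing. For (ii) I will invoke Lemma~\ref{05.1}(i) with $n=1$ — its hypothesis being vacuous, as $E_{p,q}^{2}=0$ whenever $q<0$ — producing the exact sequence $\Tor_{2}^{R}(A,M)\to E_{2,0}^{2}\to E_{0,1}^{2}\to\Tor_{1}^{R}(A,M)\to E_{1,0}^{2}\to 0$; since $E_{2,0}^{2}=E_{1,0}^{2}=0$ this collapses to $H_{1}^{AQ}(A|R;M)=E_{0,1}^{2}\cong\Tor_{1}^{R}(A,M)$, and Lemma~\ref{05.1}(ii) with $n=1$, using $E_{-1,0}^{2}=E_{-2,0}^{2}=0$, gives $H_{AQ}^{1}(A|R;M)\cong\Ext_{R}^{1}(A,M)$ in exactly the same way. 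All these isomorphisms are natural in $M$.

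For (iii) the real content is the identification of $E_{0,2}^{2}$ and $E_{0,-2}^{2}$. First I would compute the low homotopy of $L$. From $0\to\mathfrak{K}\to R\langle X\rangle\otimes_{R}A\xrightarrow{\varphi}A\to 0$, Lemma~\ref{03.3} together with the fact that $\Nor_{R}(R\langle X\rangle)$ is a projective $R$-resolution of $A$ gives $\pi_{i}(R\langle X\rangle\otimes_{R}A)\cong\Tor_{i}^{R}(A,A)$; since $\pi_{0}(\varphi)$ is an isomorphism — established inside the proof of Theorem~\ref{03.6} using that $\mu_{A}$ is an isomorphism — the homotopy long exact sequence yields $\pi_{0}(\mathfrak{K})=0$ and $\pi_{i}(\mathfrak{K})\cong\Tor_{i}^{R}(A,A)$ for $i\geq1$. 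Feeding $0\to\mathfrak{K}^{2}\to\mathfrak{K}\to L\to 0$ into the homotopy sequence and using $\pi_{\leq1}(\mathfrak{K}^{2})=0$ from the Connectivity Lemma (Theorem~\ref{03.6}) then gives $\pi_{0}(L)=0$ and $\pi_{1}(L)\cong\Tor_{1}^{R}(A,A)$; applying the Connectivity Lemma to the sequences $0\to\mathfrak{K}^{q+1}\to\mathfrak{K}^{q}\to\mathfrak{K}^{q}/\mathfrak{K}^{q+1}\to 0$ gives more generally $\pi_{k}(\Sym_{A}^{q}(L))=\pi_{k}(\mathfrak{K}^{q}/\mathfrak{K}^{q+1})=0$ for $k\leq q-1$. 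The hard step — and the one I expect to be the main obstacle — is the \emph{d\'{e}calage} identity
$$\pi_{2}\bigl(\Sym_{A}^{2}(L)\bigr)\;\cong\;\textstyle\bigwedge_{A}^{2}\pi_{1}(L)\;=\;\textstyle\bigwedge_{A}^{2}\Tor_{1}^{R}(A,A),$$
which says that the derived symmetric square of a simplicial module homotopically concentrated in degree $1$ is the exterior square placed in degree $2$, the sign switch being forced by the Koszul rule. Since $L$ is degreewise free, hence cofibrant, with $\pi_{0}(L)=0$, the homotopy of $\Sym_{A}^{2}(L)$ in degrees $\leq2$ ought to depend only on $\pi_{1}(L)$; making this precise requires a homotopy-invariance statement for the \emph{non-additive} functor $\Sym_{A}^{2}$ — a quadratic refinement of Corollary~\ref{05.3} — which I would establish through the cross-effect decomposition $\Sym_{A}^{2}(N\oplus N')\cong\Sym_{A}^{2}(N)\oplus(N\otimes_{A}N')\oplus\Sym_{A}^{2}(N')$ together with a Postnikov-tower reduction to the Eilenberg--MacLane case (equivalently, by invoking the classical Dold--Puppe/Illusie/Andr\'{e} computation of the derived symmetric powers).

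Finally, $\Nor_{A}(\mathfrak{K}^{2}/\mathfrak{K}^{3})$ is a bounded-below complex of projective, hence flat, $A$-modules whose homology vanishes below degree $2$, so the hyperhomology spectral sequence $\Tor_{p}^{A}\bigl(\pi_{q}(\mathfrak{K}^{2}/\mathfrak{K}^{3}),M\bigr)\Rightarrow H_{p+q}\bigl(\Nor_{A}(\mathfrak{K}^{2}/\mathfrak{K}^{3})\otimes_{A}M\bigr)$ of \cite[Theorem 10.90]{Ro} degenerates in total degree $2$ to give $E_{0,2}^{2}\cong\pi_{2}(\mathfrak{K}^{2}/\mathfrak{K}^{3})\otimes_{A}M\cong\bigwedge_{A}^{2}\Tor_{1}^{R}(A,A)\otimes_{A}M$, and the dual hyper-$\Ext$ argument (the complex being one of projectives) gives $E_{0,-2}^{2}\cong\Hom_{A}\bigl(\bigwedge_{A}^{2}\Tor_{1}^{R}(A,A),M\bigr)$. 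Then Lemma~\ref{05.1}(i) with $n=2$, whose hypothesis $E_{p,0}^{2}=0$ for $p\geq2$ holds, applied to the homological spectral sequence yields $\Tor_{3}^{R}(A,M)\to E_{2,1}^{2}\to E_{0,2}^{2}\to\Tor_{2}^{R}(A,M)\to E_{1,1}^{2}\to 0$, which under the identifications above is exactly the first five-term sequence; and Lemma~\ref{05.1}(ii) with $n=2$ applied to the cohomological spectral sequence yields the second. Naturality in $M$ is inherited from the naturality of the two spectral sequences and of the five-term sequences.
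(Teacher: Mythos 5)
Your treatment of parts (i) and (ii) is correct and essentially the route the paper takes, differing only cosmetically: for (i) you read the vanishing off the first/third-quadrant shape of the spectral sequences ($H_{0}^{AQ}=E_{-1,1}^{2}$ and $H_{AQ}^{0}=E_{1,-1}^{2}$) rather than from $\Omega_{A|R}=0$ directly, and for (ii) you invoke Lemma~\ref{05.1} with $n=1$ where the paper cites the standard five-term sequence; both give the same thing. Your preliminary homotopy computations ($\pi_{0}(\mathfrak{K})=0$, $\pi_{i}(\mathfrak{K})\cong\Tor_{i}^{R}(A,A)$ for $i\geq 1$, $\pi_{1}(L)\cong\Tor_{1}^{R}(A,A)$) are also correct and match what one extracts from Theorems~\ref{03.6} and~\ref{04.2}.

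The genuine gap is exactly where you flag it: the ``d\'{e}calage'' isomorphism $\pi_{2}\bigl(\Sym_{A}^{2}(L)\bigr)\cong\textstyle\bigwedge_{A}^{2}\pi_{1}(L)$. You name it as the main obstacle, observe that it requires a homotopy-invariance statement for the non-additive functor $\Sym_{A}^{2}$, and propose to close it either by a cross-effect/Postnikov reduction or by citing Dold--Puppe/Illusie/Andr\'{e} — but you carry out neither, so this step is not proved. The sketch is also not obviously routine: the cross-effect decomposition plus a Postnikov argument would need $L$ to be replaced by a module with cell structure adapted to $\pi_{1}$, and one must check that the exterior square (not the symmetric square) emerges with the Koszul sign. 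The paper avoids stating d\'{e}calage in the abstract and instead manufactures the answer concretely. Taking a semi-projective resolution $P\to\mathbb{L}^{A|R}$ with $P$ supported in degrees $\geq 1$, setting $U=\DK(\Sigma^{-1}P)$, $V=\DK(\Cone(1^{\Sigma^{-1}P}))$, $W=\DK(P)$, it produces from the Koszul complex $K^{A}(U_{n},\Sym_{A}(V_{n}))$ the four-term exact sequence of simplicial $A$-modules
$$0 \rightarrow \textstyle\bigwedge_{A}^{2}U \rightarrow U\otimes_{A}V \rightarrow \Sym_{A}^{2}(V) \rightarrow \Sym_{A}^{2}(W) \rightarrow 0,$$
normalizes and tensors with $M$, shows the middle two terms are acyclic, and concludes $H_{2}(\Nor_{A}(\Sym_{A}^{2}(W))\otimes_{A}M)\cong H_{0}(\Nor_{A}(\bigwedge_{A}^{2}U)\otimes_{A}M)$; the exterior square then falls out via the adjunction $\bigwedge_{A}\pi_{0}(U)\cong\pi_{0}(\bigwedge_{A}U)$ and the identification $\pi_{0}(U)\cong\Tor_{1}^{R}(A,A)$. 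The paper's Corollary~\ref{05.3} (the quadratic homotopy-preservation you ask for) is used only to pass between $\Sym_{A}^{2}(L)$ and $\Sym_{A}^{2}(W)$, not to prove d\'{e}calage itself. Your final hyperhomology / hyper-$\Ext$ step, reducing $E_{0,\pm 2}^{2}$ to $\pi_{2}(\Sym_{A}^{2}(L))$ via the K\"{u}nneth and universal-coefficient spectral sequences, is a legitimate repackaging of the paper's direct computation and would work once the d\'{e}calage step is actually supplied. As written, though, that key step is asserted, not proved.
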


\begin{proof}
(i): By the hypothesis, $\mu_{A}$ is an isomorphism, so $\mathfrak{k}= \Ker(\mu_{A})=0$, whence $\Omega_{A|R}= \mathfrak{k}/\mathfrak{k}^{2} =0$. Using \cite[Proposition 3.8]{Qu1}, we get $H_{0}^{AQ}(A|R;M)\cong \Omega_{A|R}\otimes_{A}M=0$ and $H_{AQ}^{0}(A|R;M)\cong \Der_{R}(A,M) \cong \Hom_{A}\left(\Omega_{A|R},M\right)=0$.

(ii): Consider the following natural first quadrant spectral sequence from Theorem \ref{04.2}:
$$E_{p,q}^{2}= H_{p+q}\left(\Nor_{A}\left(\Sym_{A}^{q}\left(\Omega_{R\langle X\rangle \mid R}\otimes_{R\langle X\rangle}A\right)\right)\otimes_{A}M\right) \Rightarrow \Tor_{p+q}^{R}(A,M)$$
By \cite[Theorem 10.31]{Ro}, there is a natural five-term exact sequence as follows:
$$H_{2}\rightarrow E_{2,0}^{2}\rightarrow E_{0,1}^{2}\rightarrow H_{1}\rightarrow E_{1,0}^{2}\rightarrow 0$$
But we have:
\small
$$E_{1,0}^{2} = H_{1}\left(\Nor_{A}\left(\Sym_{A}^{0}\left(\Omega_{R\langle X\rangle \mid R}\otimes_{R\langle X\rangle}A\right)\right)\otimes_{A}M\right) = H_{1}\left(\Nor_{A}(A)\otimes_{A}M\right) = H_{1}\left(A\otimes_{A}M\right) \cong H_{1}(M) = 0$$
\normalsize
Similarly, we see that $E_{2,0}^{2}=0$. Thus from the above sequence, we get the following natural isomorphism:
\begin{equation*}
\begin{split}
 H_{1}^{AQ}(A|R;M) & = H_{1}\left(\mathbb{L}^{A|R}\otimes_{A}M\right) = H_{1}\left(\Nor_{A}\left(\Omega_{R\langle X\rangle \mid R}\otimes_{R\langle X\rangle}A\right)\otimes_{A}M\right) \\
 & = H_{1}\left(\Nor_{A}\left(\Sym_{A}^{1}\left(\Omega_{R\langle X\rangle \mid R}\otimes_{R\langle X\rangle}A\right)\right)\otimes_{A}M\right) = E_{0,1}^{2} \cong H_{1} = \Tor_{1}^{R}(A,M)
\end{split}
\end{equation*}

For the second part, consider the following natural third quadrant spectral sequence from Theorem \ref{04.2}:
$$E_{p,q}^{2}= H_{p+q}\left(\Hom_{A}\left(\Nor_{A}\left(\Sym_{A}^{-q}\left(\Omega_{R\langle X\rangle \mid R}\otimes_{R\langle X\rangle}A\right)\right),M\right)\right) \Rightarrow \Ext_{R}^{-p-q}(A,M)$$
By \cite[Theorem 10.33]{Ro}, there is a natural five-term exact sequence as follows:
$$0\rightarrow E_{-1,0}^{2}\rightarrow H_{-1}\rightarrow E_{0,-1}^{2}\rightarrow E_{-2,0}^{2}\rightarrow H_{-2}$$
But we have:
\begin{equation*}
\begin{split}
 E_{-1,0}^{2} & = H_{-1}\left(\Hom_{A}\left(\Nor_{A}\left(\Sym_{A}^{0}\left(\Omega_{R\langle X\rangle \mid R}\otimes_{R\langle X\rangle}A\right)\right),M\right)\right) = H_{-1}\left(\Hom_{A}\left(\Nor_{A}(A),M\right)\right) \\
 & = H_{-1}\left(\Hom_{A}\left(A,M\right)\right) \cong H_{-1}(M) = 0
\end{split}
\end{equation*}
Similarly, we see that $E_{-2,0}^{2}=0$. Thus from the above sequence, we get the following natural isomorphism:
\begin{equation*}
\begin{split}
 H_{AQ}^{1}(A|R;M) & = H_{-1}\left(\Hom_{A}\left(\mathbb{L}^{A|R},M\right)\right) = H_{-1}\left(\Hom_{A}\left(\Nor_{A}\left(\Omega_{R\langle X\rangle \mid R}\otimes_{R\langle X\rangle}A\right),M\right)\right) \\
 & = H_{-1}\left(\Hom_{A}\left(\Nor_{A}\left(\Sym_{A}^{1}\left(\Omega_{R\langle X\rangle \mid R}\otimes_{R\langle X\rangle}A\right)\right),M\right)\right) = E_{0,-1}^{2} \cong H_{-1} = \Ext_{R}^{1}(A,M)
\end{split}
\end{equation*}

(iii): Consider the following natural first quadrant spectral sequence from Theorem \ref{04.2}:
$$E_{p,q}^{2}= H_{p+q}\left(\Nor_{A}\left(\Sym_{A}^{q}\left(\Omega_{R\langle X\rangle \mid R}\otimes_{R\langle X\rangle}A\right)\right)\otimes_{A}M\right) \Rightarrow \Tor_{p+q}^{R}(A,M)$$
We note that $E_{p,q}^{2}=0$ for every $p\geq 2$ and $q\leq 0$. Indeed, if $q<0$, then $\Sym_{A}^{q}\left(\Omega_{R\langle X\rangle \mid R}\otimes_{R\langle X\rangle}A\right)=0$, so $E_{p,q}^{2}=0$. Moreover, if $q=0$, then since $p\geq 2$, we have:
\small
$$E_{p,0}^{2}= H_{p}\left(\Nor_{A}\left(\Sym_{A}^{0}\left(\Omega_{R\langle X\rangle \mid R}\otimes_{R\langle X\rangle}A\right)\right)\otimes_{A}M\right) = H_{p}\left(\Nor_{A}(A)\otimes_{A}M\right)= H_{p}(A\otimes_{A}M)\cong H_{p}(M)=0$$
\normalsize
Therefore, it follows from Lemma \ref{05.1} that there is a natural five-term exact sequence as follows:
$$H_{3}\rightarrow E_{2,1}^{2} \rightarrow E_{0,2}^{2} \rightarrow H_{2} \rightarrow E_{1,1}^{2} \rightarrow 0$$
We note that:
\begin{equation*}
\begin{split}
 E_{1,1}^{2} & = H_{2}\left(\Nor_{A}\left(\Sym_{A}^{1}\left(\Omega_{R\langle X\rangle \mid R}\otimes_{R\langle X\rangle}A\right)\right)\otimes_{A}M\right) = H_{2}\left(\Nor_{A}\left(\Omega_{R\langle X\rangle \mid R}\otimes_{R\langle X\rangle}A\right)\otimes_{A}M\right) \\
 & = H_{2}\left(\mathbb{L}^{A|R}\otimes_{A}M\right) = H_{2}^{AQ}(A|R;M)
\end{split}
\end{equation*}
Similarly, we have $E_{2,1}^{2}= H_{3}^{AQ}(A|R;M)$. Moreover, $H_{2}= \Tor_{2}^{R}(A,M)$ and $H_{3}= \Tor_{3}^{R}(A,M)$. Therefore, we are left to determine the following term:
$$E_{0,2}^{2}= H_{2}\left(\Nor_{A}\left(\Sym_{A}^{2}\left(\Omega_{R\langle X\rangle \mid R}\otimes_{R\langle X\rangle}A\right)\right)\otimes_{A}M\right)$$
We express this term in terms of exterior powers. By \cite[Theorems 5.1.12 and 5.2.15]{CFH}, there is a semi-projective resolution $P\rightarrow \mathbb{L}^{A|R}$ such that $P_{n}$ is a free $A$-module for every $n\in \mathbb{Z}$ and $P_{n}=0$ for every $n< \inf\left(\mathbb{L}^{A|R}\right)$. Since $\mu_{A}$ is an isomorphism, $\mathfrak{k}= \Ker(\mu_{A})=0$, so $H_{0}\left(\mathbb{L}^{A|R}\right) \cong \Omega_{A|R}= \mathfrak{k}/ \mathfrak{k}^{2}=0$, whence $\inf\left(\mathbb{L}^{A|R}\right) \geq 1$. Hence $P_{n}=0$ for every $n<1$. Therefore, we have the following degreewise split short exact sequence of connective $A$-complexes:
$$0\rightarrow \Sigma^{-1}P \rightarrow \Cone\left(1^{\Sigma^{-1}P}\right) \rightarrow P \rightarrow 0$$
Since the Dold-Kan functor $\DK:\mathcal{C}_{\geq 0}(A) \rightarrow \mathpzc{s}\mathcal{M}\mathpzc{od}(A)$ is an equivalence of categories, it is a fortiori exact. Moreover, it is clear from the definition that it preserves degreewise split short exact sequences. As a result, we get the following degreewise split short exact sequence of simplicial $A$-modules:
$$0\rightarrow \DK\left(\Sigma^{-1}P\right) \rightarrow \DK\left(\Cone\left(1^{\Sigma^{-1}P}\right)\right) \rightarrow \DK(P) \rightarrow 0$$
Setting $U= \DK\left(\Sigma^{-1}P\right)$, $V= \DK\left(\Cone\left(1^{\Sigma^{-1}P}\right)\right)$, and $W= \DK(P)$ for convenience, the above short exact sequence takes the following form:
$$0\rightarrow U \rightarrow V \rightarrow W \rightarrow 0$$
We further note that for any $n\geq 0$, $P_{n}$ is a free $A$-module, so it is obvious that $U_{n}$, $V_{n}$, and $W_{n}$ are free $A$-modules. Now fix $n\geq 0$. Since $V_{n}= U_{n}\oplus W_{n}$, we get:
$$\Sym_{A}(V_{n})= \Sym_{A}(U_{n}\oplus W_{n})\cong \Sym_{A}(U_{n}) \otimes_{A} \Sym_{A}(W_{n})$$
Thus $\Sym_{A}(V_{n})$ is a $\Sym_{A}(U_{n})$-module via the canonical injection $\Sym_{A}(U_{n}) \rightarrow \Sym_{A}(V_{n})$. Therefore, as in the proof of Theorem \ref{03.6}, we have the following natural isomorphisms for every $i\geq 0$:
\begin{equation*}
\begin{split}
 H_{i}\left(K^{A}\left(U_{n},\Sym_{A}(V_{n})\right)\right) & \cong \Tor_{i}^{\Sym_{A}(U_{n})}\left(A,\Sym_{A}(V_{n})\right) \cong \Tor_{i}^{\Sym_{A}(U_{n})}\left(A,\Sym_{A}(U_{n}) \otimes_{A} \Sym_{A}(W_{n})\right) \\
 & \cong \Tor_{i}^{\Sym_{A}(U_{n})}\left(A,\Sym_{A}(U_{n})\right)\otimes_{A}\Sym_{A}(W_{n})
\end{split}
\end{equation*}
It follows that:
\begin{equation*}
 \label{eqn:damage piecewise}
 H_{i}\left(K^{A}\left(U_{n},\Sym_{A}(V_{n})\right)\right) \cong
 \begin{dcases}
  \Sym_{A}(W_{n}) & \textrm{if } i=0 \\
  0 & \textrm{if } i>0
 \end{dcases}
\end{equation*}
This means that the following augmented sequence is exact:
$$K^{A}\left(U_{n},\Sym_{A}(V_{n})\right)^{+}: \cdots \rightarrow K^{A}\left(U_{n},\Sym_{A}(V_{n})\right)_{2} \rightarrow K^{A}\left(U_{n},\Sym_{A}(V_{n})\right)_{1} \rightarrow K^{A}\left(U_{n},\Sym_{A}(V_{n})\right)_{0} $$$$ \rightarrow \Sym_{A}(W_{n}) \rightarrow 0$$
Writing $\Sym_{A}(W_{n})= \bigoplus_{j=0}^{\infty}\Sym_{A}^{j}(W_{n})$, we have
$$\bigoplus_{j=0}^{\infty} H_{i,j}\left(K^{A}\left(U_{n},\Sym_{A}(V_{n})\right)^{+}\right) \cong H_{i}\left(K^{A}\left(U_{n},\Sym_{A}(V_{n})\right)^{+}\right)= 0$$
for every $i\geq 0$, so $H_{i,j}\left(K^{A}\left(U_{n},\Sym_{A}(V_{n})\right)^{+}\right)=0$ for every $i,j \geq 0$. In particular, $H_{i,2}\left(K^{A}\left(U_{n},\Sym_{A}(V_{n})\right)^{+}\right)=0$ for every $i\geq 0$, so we get the following natural exact sequence of $A$-modules:
$$0 \rightarrow \textstyle\bigwedge_{A}^{2}U_{n}\otimes_{A}\Sym_{A}^{0}(V_{n}) \rightarrow \textstyle\bigwedge_{A}^{1}U_{n}\otimes_{A}\Sym_{A}^{1}(V_{n}) \rightarrow \textstyle\bigwedge_{A}^{0}U_{n}\otimes_{A}\Sym_{A}^{2}(V_{n}) \rightarrow \Sym_{A}^{2}(W_{n})\rightarrow 0$$
But this sequence is isomorphic to the following natural exact sequence of $A$-modules:
$$0 \rightarrow \textstyle\bigwedge_{A}^{2}U_{n} \rightarrow U_{n}\otimes_{A}V_{n} \rightarrow \Sym_{A}^{2}(V_{n}) \rightarrow \Sym_{A}^{2}(W_{n})\rightarrow 0$$
As a result, we get the following exact sequence of simplicial $A$-modules:
$$0 \rightarrow \textstyle\bigwedge_{A}^{2}U \rightarrow U\otimes_{A}V \rightarrow \Sym_{A}^{2}(V) \rightarrow \Sym_{A}^{2}(W) \rightarrow 0$$
Applying the normalization functor, we obtain the following exact sequence of $A$-complexes:
$$0 \rightarrow \Nor_{A}\left(\textstyle\bigwedge_{A}^{2}U\right) \rightarrow \Nor_{A}\left(U\otimes_{A}V\right) \rightarrow \Nor_{A}\left(\Sym_{A}^{2}(V)\right) \rightarrow \Nor_{A}\left(\Sym_{A}^{2}(W)\right) \rightarrow 0$$
For any $n\geq 0$, the sequence
$$0 \rightarrow \Nor_{A}\left(\textstyle\bigwedge_{A}^{2}U\right)_{n} \rightarrow \Nor_{A}\left(U\otimes_{A}V\right)_{n} \rightarrow \Nor_{A}\left(\Sym_{A}^{2}(V)\right)_{n} \rightarrow \Nor_{A}\left(\Sym_{A}^{2}(W)\right)_{n} \rightarrow 0$$
is a right-bounded exact $A$-complex of projective modules, so it is split exact, thereby the sequence
\small
$$0 \rightarrow \Nor_{A}\left(\textstyle\bigwedge_{A}^{2}U\right)_{n}\otimes_{A}M \rightarrow \Nor_{A}\left(U\otimes_{A}V\right)_{n}\otimes_{A}M \rightarrow \Nor_{A}\left(\Sym_{A}^{2}(V)\right)_{n}\otimes_{A}M \rightarrow \Nor_{A}\left(\Sym_{A}^{2}(W)\right)_{n}\otimes_{A}M \rightarrow 0$$
\normalsize
is split exact. Hence we get the following exact sequence of $A$-complexes:
$$0 \rightarrow \Nor_{A}\left(\textstyle\bigwedge_{A}^{2}U\right)\otimes_{A}M \rightarrow \Nor_{A}\left(U\otimes_{A}V\right)\otimes_{A}M \rightarrow \Nor_{A}\left(\Sym_{A}^{2}(V)\right)\otimes_{A}M \rightarrow \Nor_{A}\left(\Sym_{A}^{2}(W)\right)\otimes_{A}M \rightarrow 0$$
We now note that by \cite[Theorem 4.3.30]{CFH}, $\Cone\left(1^{\Sigma^{-1}P}\right)$ is a contractible $A$-complex, so we have
$$H_{i}\left(\Nor_{A}(V)\right)= H_{i}\left(\Nor_{A}\left(\DK\left(\Cone\left(1^{\Sigma^{-1}P}\right)\right)\right)\right)\cong H_{i}\left(\Cone\left(1^{\Sigma^{-1}P}\right)\right)=0$$
for every $i\geq 0$, i.e. $\Nor_{A}(V)$ is an exact $A$-complex. Since $\Nor_{A}(U)$ is an $A$-complex of projective, hence flat modules, it follows from \cite[Theorem 10.90]{Ro} that $\Nor_{A}(U)\otimes_{A} \Nor_{A}(V)$ is an exact $A$-complex. But then by Lemma \ref{03.3},
$$H_{i}\left(\Nor_{A}(U\otimes_{A}V)\right)= \pi_{i}(U\otimes_{A}V) \cong H_{i}\left(\Nor_{A}(U) \otimes_{A} \Nor_{A}(V)\right)=0$$
for every $i\geq 0$, so $\Nor_{A}(U\otimes_{A}V)$ is an exact $A$-complex. As $\Nor_{A}(U\otimes_{A}V)$ is an $A$-complex of projective, hence flat modules, we conclude from \cite[Theorem 10.90]{Ro} that $\Nor_{A}(U\otimes_{A}V)\otimes_{A}M$ is an exact $A$-complex. On the other hand, since $\pi_{i}(V)= H_{i}\left(\Nor_{A}(V)\right)=0$ for every $i\geq 0$, we conclude that the unique morphism $0\rightarrow V$ is a weak equivalence in $\mathpzc{s}\mathcal{M}\mathpzc{od}(A)$. As the source and the target of the latter morphism consists of free, hence projective modules, they are both cofibrant and thus bifibrant in $\mathpzc{s}\mathcal{M}\mathpzc{od}(A)$, so by Whitehead Homotopy Lemma (see \cite[Theorem 7.5.10]{Hi}), $0\rightarrow V$ is a homotopy equivalence in $\mathpzc{s}\mathcal{M}\mathpzc{od}(A)$. Since the functor $\Sym_{A}^{2}(-):\mathpzc{s}\mathcal{M}\mathpzc{od}(A) \rightarrow \mathpzc{s}\mathcal{M}\mathpzc{od}(A)$ is extended from a preadditive covariant functor $\Sym_{A}^{2}(-):\mathcal{M}\mathpzc{od}(A) \rightarrow \mathcal{M}\mathpzc{od}(A)$, we conclude from Corollary \ref{05.3} that $0=\Sym_{A}^{2}(0)\rightarrow \Sym_{A}^{2}(V)$ is a homotopy equivalence in $\mathpzc{s}\mathcal{M}\mathpzc{od}(A)$. But the source and the target of the latter morphism consists of free, hence projective modules, so they are both cofibrant in $\mathpzc{s}\mathcal{M}\mathpzc{od}(A)$, whence $0 \rightarrow \Sym_{A}^{2}(V)$ is a weak equivalence in $\mathpzc{s}\mathcal{M}\mathpzc{od}(A)$. As a result, $0=\Nor_{A}(0) \rightarrow \Nor_{A}\left(\Sym_{A}^{2}(V)\right)$ is a weak equivalence in $\mathcal{C}_{\geq 0}(A)$, i.e. a quasi-isomorphism. Again the source and the target of the latter morphism consists of projective modules, so they are both cofibrant in $\mathcal{C}_{\geq 0}(A)$, thereby $0 \rightarrow \Nor_{A}\left(\Sym_{A}^{2}(V)\right)$ is a homotopy equivalence in $\mathcal{C}_{\geq 0}(A)$. Consequently, $0=0 \otimes_{A}M \rightarrow \Nor_{A}\left(\Sym_{A}^{2}(V)\right)\otimes_{A}M$ is a homotopy equivalence in $\mathcal{C}_{\geq 0}(A)$, hence a quasi-isomorphism. In particular, we have $H_{i}\left(\Nor_{A}\left(\Sym_{A}^{2}(V)\right)\otimes_{A}M\right)=0$ for every $i\geq 0$, i.e. $\Nor_{A}\left(\Sym_{A}^{2}(V)\right)\otimes_{A}M$ is an exact $A$-complex. Now consider the following exact sequence of $A$-complexes:
$$0 \rightarrow \Nor_{A}\left(\textstyle\bigwedge_{A}^{2}U\right)\otimes_{A}M \rightarrow \Nor_{A}\left(U\otimes_{A}V\right)\otimes_{A}M \rightarrow \Nor_{A}\left(\Sym_{A}^{2}(V)\right)\otimes_{A}M \rightarrow \Nor_{A}\left(\Sym_{A}^{2}(W)\right)\otimes_{A}M \rightarrow 0$$
We just observed that $\Nor_{A}(U\otimes_{A}V)\otimes_{A}M$ and $\Nor_{A}\left(\Sym_{A}^{2}(V)\right)\otimes_{A}M$ are exact $A$-complexes, so we infer that:
$$H_{2}\left(\Nor_{A}\left(\Sym_{A}^{2}(W)\right)\otimes_{A}M\right)\cong H_{0}\left(\Nor_{A}\left(\textstyle\bigwedge_{A}^{2}U\right)\otimes_{A}M\right)$$
Since $P\rightarrow \mathbb{L}^{A|R}$ is a quasi-isomorphism, $W= \DK(P)\rightarrow \DK\left(\mathbb{L}^{A|R}\right)$ is a weak equivalence in $\mathpzc{s}\mathcal{M}\mathpzc{od}(A)$. But
$$\DK\left(\mathbb{L}^{A|R}\right) = \DK\left(\Nor_{A}\left(\Omega_{R\langle X\rangle |R}\otimes_{R\langle X\rangle}A\right)\right)\cong \Omega_{R\langle X\rangle |R}\otimes_{R\langle X\rangle}A,$$
so there is a weak equivalence $W\rightarrow \Omega_{R\langle X\rangle |R}\otimes_{R\langle X\rangle}A$ in $\mathpzc{s}\mathcal{M}\mathpzc{od}(A)$. As the source and the target of the latter morphism consists of free, hence projective modules, they are both cofibrant in $\mathpzc{s}\mathcal{M}\mathpzc{od}(A)$, so $W\rightarrow \Omega_{R\langle X\rangle |R}\otimes_{R\langle X\rangle}A$ is a homotopy equivalence in $\mathpzc{s}\mathcal{M}\mathpzc{od}(A)$, whence $\Sym_{A}^{2}(W)\rightarrow \Sym_{A}^{2}\left(\Omega_{R\langle X\rangle |R}\otimes_{R\langle X\rangle}A\right)$ is a homotopy equivalence in $\mathpzc{s}\mathcal{M}\mathpzc{od}(A)$. But the source and the target of the latter morphism consists of free, hence projective modules, so they are both cofibrant in $\mathpzc{s}\mathcal{M}\mathpzc{od}(A)$, whence $\Sym_{A}^{2}(W)\rightarrow \Sym_{A}^{2}\left(\Omega_{R\langle X\rangle |R}\otimes_{R\langle X\rangle}A\right)$ is a weak equivalence in $\mathpzc{s}\mathcal{M}\mathpzc{od}(A)$. As a result, $\Nor_{A}\left(\Sym_{A}^{2}(W)\right) \rightarrow \Nor_{A}\left(\Sym_{A}^{2}\left(\Omega_{R\langle X\rangle |R}\otimes_{R\langle X\rangle}A\right)\right)$ is a weak equivalence in $\mathcal{C}_{\geq 0}(A)$, i.e. a quasi-isomorphism. Again the source and the target of latter morphism consists of projective modules, so they are both cofibrant in $\mathcal{C}_{\geq 0}(A)$, thereby $\Nor_{A}\left(\Sym_{A}^{2}(W)\right) \rightarrow \Nor_{A}\left(\Sym_{A}^{2}\left(\Omega_{R\langle X\rangle |R}\otimes_{R\langle X\rangle}A\right)\right)$ is a homotopy equivalence in $\mathcal{C}_{\geq 0}(A)$. As a consequence,
$$\Nor_{A}\left(\Sym_{A}^{2}(W)\right)\otimes_{A}M \rightarrow \Nor_{A}\left(\Sym_{A}^{2}\left(\Omega_{R\langle X\rangle |R}\otimes_{R\langle X\rangle}A\right)\right)\otimes_{A}M$$
is a homotopy equivalence in $\mathcal{C}_{\geq 0}(A)$, hence a quasi-isomorphism. In particular, we have:
$$E_{0,2}^{2}= H_{2}\left(\Nor_{A}\left(\Sym_{A}^{2}\left(\Omega_{R\langle X\rangle \mid R}\otimes_{R\langle X\rangle}A\right)\right)\otimes_{A}M\right) \cong H_{2}\left(\Nor_{A}\left(\Sym_{A}^{2}\left(W\right)\right)\otimes_{A}M\right)$$
Now let $\mathcal{S}\mathpzc{kw}\mathcal{A}\mathpzc{lg}(A)$ denote the category of skew-commutative $A$-algebras, and $\sharp_{1}:\mathcal{S}\mathpzc{kw}\mathcal{A}\mathpzc{lg}(A)\rightarrow \mathcal{M}\mathpzc{od}(A)$ be the the projection functor onto degree one summand. Deploying the adjoint pairs of functors $\left(\bigwedge_{A}-,\sharp_{1}\right):\mathcal{M}\mathpzc{od}(A) \leftrightarrows \mathcal{S}\mathpzc{kw}\mathcal{A}\mathpzc{lg}(A)$, $\left(\pi_{0},\Diag\right):\mathpzc{s}\mathcal{M}\mathpzc{od}(A) \leftrightarrows \mathcal{M}\mathpzc{od}(A)$, $\left(\bigwedge_{A}-,\sharp_{1}\right):\mathpzc{s}\mathcal{M}\mathpzc{od}(A) \leftrightarrows \mathpzc{s}\mathcal{S}\mathpzc{kw}\mathcal{A}\mathpzc{lg}(A)$, and $\left(\pi_{0},\Diag\right):\mathpzc{s}\mathcal{S}\mathpzc{kw}\mathcal{A}\mathpzc{lg}(A) \leftrightarrows \mathcal{S}\mathpzc{kw}\mathcal{A}\mathpzc{lg}(A)$ successively, we get the following natural isomorphisms for every skew-commutative $A$-algebra $B$:
\begin{equation*}
\begin{split}
 \Mor_{\mathcal{S}\mathpzc{kw}\mathcal{A}\mathpzc{lg}(A)}\left(\textstyle\bigwedge_{A}\pi_{0}(U),B\right) & \cong \Mor_{\mathcal{M}\mathpzc{od}(A)}\left(\pi_{0}(U),B_{1}\right) \cong \Mor_{\mathpzc{s}\mathcal{M}\mathpzc{od}(A)}\left(U,B_{1}\right) \cong \Mor_{\mathpzc{s}\mathcal{S}\mathpzc{kw}\mathcal{A}\mathpzc{lg}(A)}\left(\textstyle\bigwedge_{A}U,B\right) \\
 & \cong \Mor_{\mathcal{S}\mathpzc{kw}\mathcal{A}\mathpzc{lg}(A)}\left(\pi_{0}\left(\textstyle\bigwedge_{A} U\right),B\right)
\end{split}
\end{equation*}
An application of Yoneda Lemma (see \cite[Corollary 8.5]{Aw}) implies that $\bigwedge_{A}H_{0}\left(\Nor_{A}(U)\right)= \bigwedge_{A}\pi_{0}(U) \cong \pi_{0}\left(\bigwedge_{A}U\right)= H_{0}\left(\Nor_{A}\left(\bigwedge_{A}U\right)\right)$, so in particular, $\bigwedge_{A}^{2}H_{0}\left(\Nor_{A}(U)\right) \cong H_{0}\left(\Nor_{A}\left(\bigwedge_{A}^{2}U\right)\right)$. Thus we get:
\begin{equation*}
\begin{split}
 H_{0}\left(\Nor_{A}\left(\textstyle\bigwedge_{A}^{2}U\right)\otimes_{A}M\right) & = \Coker\left(\partial_{1}^{\Nor_{A}\left(\bigwedge_{A}^{2}U\right)\otimes_{A}M}\right) = \Coker\left(\partial_{1}^{\Nor_{A}\left(\bigwedge_{A}^{2}U\right)}\otimes_{A}M\right) \\
 & \cong \Coker\left(\partial_{1}^{\Nor_{A}\left(\bigwedge_{A}^{2}U\right)}\right)\otimes_{A}M = H_{0}\left(\Nor_{A}\left(\textstyle\bigwedge_{A}^{2}U\right)\right)\otimes_{A}M \\
 & \cong \textstyle\bigwedge_{A}^{2}H_{0}\left(\Nor_{A}(U)\right)\otimes_{A}M
\end{split}
\end{equation*}
Also, using (ii), we have:
\begin{equation*}
\begin{split}
 H_{0}\left(\Nor_{A}(U)\right) & \cong H_{0}\left(\Nor_{A}\left(\DK\left(\Sigma^{-1}P\right)\right)\right) \cong H_{0}\left(\Sigma^{-1}P\right) = H_{1}(P) \cong H_{1}\left(\mathbb{L}^{A|R}\right) \cong H_{1}\left(\mathbb{L}^{A|R}\otimes_{A}A\right) \\
 & = H_{1}^{AQ}(A|R;A) \cong \Tor_{1}^{R}(A,A)
\end{split}
\end{equation*}
Putting everything together, we get:
\begin{equation*}
\begin{split}
 E_{0,2}^{2} & = H_{2}\left(\Nor_{A}\left(\Sym_{A}^{2}\left(\Omega_{R\langle X\rangle \mid R}\otimes_{R\langle X\rangle}A\right)\right)\otimes_{A}M\right) \cong H_{2}\left(\Nor_{A}\left(\Sym_{A}^{2}(W)\right)\otimes_{A}M\right) \\
 & \cong H_{0}\left(\Nor_{A}\left(\textstyle\bigwedge_{A}^{2}U\right)\otimes_{A}M\right) \cong \textstyle\bigwedge_{A}^{2}H_{0}\left(\Nor_{A}(U)\right)\otimes_{A}M \cong \textstyle\bigwedge_{A}^{2}\Tor_{1}^{R}(A,A)\otimes_{A}M
\end{split}
\end{equation*}
All in all, we get the following exact sequence:
$$\Tor_{3}^{R}(A,M)\rightarrow H_{3}^{AQ}(A|R;M) \rightarrow \textstyle\bigwedge_{A}^{2}\Tor_{1}^{R}(A,A)\otimes_{A}M \rightarrow \Tor_{2}^{R}(A,M) \rightarrow H_{2}^{AQ}(A|R;M) \rightarrow 0$$

For the second part, consider the following third quadrant spectral sequence from Theorem \ref{04.2}:
$$E_{p,q}^{2}= H_{p+q}\left(\Hom_{A}\left(\Nor_{A}\left(\Sym_{A}^{-q}\left(\Omega_{R\langle X\rangle \mid R}\otimes_{R\langle X\rangle}A\right)\right),M\right)\right) \Rightarrow \Ext_{R}^{-p-q}(A,M)$$
We note that $E_{p,q}^{2}=0$ for every $p\leq -2$ and $q\geq 0$. Indeed, if $q>0$, then $\Sym_{A}^{-q}\left(\Omega_{R\langle X\rangle |R}\otimes_{R\langle X\rangle}A\right)=0$, so $E_{p,q}^{2}=0$. Moreover, if $q=0$, then since $p\leq -2$, we have:
\begin{equation*}
\begin{split}
 E_{p,0}^{2} & = H_{p}\left(\Hom_{A}\left(\Nor_{A}\left(\Sym_{A}^{0}\left(\Omega_{R\langle X\rangle \mid R}\otimes_{R\langle X\rangle}A\right)\right),M\right)\right) = H_{p}\left(\Hom_{A}\left(\Nor_{A}(A),M\right)\right) \\
 & = H_{p}\left(\Hom_{A}(A,M)\right) \cong H_{p}(M)  = 0
\end{split}
\end{equation*}
Therefore, it follows from Lemma \ref{05.1} that there is a natural five-term exact sequence as follows:
$$0\rightarrow E_{-1,-1}^{2}\rightarrow H_{-2}\rightarrow E_{0,-2}^{2}\rightarrow E_{-2,-1}^{2}\rightarrow H_{-3}$$
We note that:
\begin{equation*}
\begin{split}
 E_{-1,-1}^{2} & = H_{-2}\left(\Hom_{A}\left(\Nor_{A}\left(\Sym_{A}^{1}\left(\Omega_{R\langle X\rangle \mid R}\otimes_{R\langle X\rangle}A\right)\right),M\right)\right) \\
 & = H_{-2}\left(\Hom_{A}\left(\Nor_{A}\left(\Omega_{R\langle X\rangle \mid R}\otimes_{R\langle X\rangle}A\right),M\right)\right) \\
 & = H_{-2}\left(\Hom_{A}\left(\mathbb{L}^{A|R},M\right)\right) \\
 & = H^{2}_{AQ}(A|R;M)
\end{split}
\end{equation*}
Similarly, we have $E_{-2,-1}^{2}= H_{AQ}^{3}(A|R;M)$. Moreover, $H_{-2}= \Ext_{R}^{2}(A,M)$ and $H_{-3}= \Ext_{R}^{3}(A,M)$. Therefore, we are left to determine the following term:
$$E_{0,-2}^{2} = H_{-2}\left(\Hom_{A}\left(\Nor_{A}\left(\Sym_{A}^{2}\left(\Omega_{R\langle X\rangle \mid R}\otimes_{R\langle X\rangle}A\right)\right),M\right)\right)$$
We express this term in terms of exterior powers. Consider the exact sequence
$$0\rightarrow \Nor_{A}\left(\textstyle\bigwedge_{A}^{2}U\right) \rightarrow \Nor_{A}\left(U\otimes_{A}V\right) \rightarrow \Nor_{A}\left(\Sym_{A}^{2}(V)\right) \rightarrow \Nor_{A}\left(\Sym_{A}^{2}(W)\right) \rightarrow 0$$
of $A$-complexes obtained above. For any $n\geq 0$, the sequence
$$0\rightarrow \Nor_{A}\left(\textstyle\bigwedge_{A}^{2}U\right)_{n} \rightarrow \Nor_{A}\left(U\otimes_{A}V\right)_{n} \rightarrow \Nor_{A}\left(\Sym_{A}^{2}(V)\right)_{n} \rightarrow \Nor_{A}\left(\Sym_{A}^{2}(W)\right)_{n} \rightarrow 0$$
is split exact, so the sequence
$$0\rightarrow \Hom_{A}\left(\Nor_{A}\left(\Sym_{A}^{2}(W)\right)_{n},M\right) \rightarrow \Hom_{A}\left(\Nor_{A}\left(\Sym_{A}^{2}(V)\right)_{n},M\right) \rightarrow \Hom_{A}\left(\Nor_{A}\left(U\otimes_{A}V\right)_{n},M\right) $$$$ \rightarrow \Hom_{A}\left(\Nor_{A}\left(\textstyle\bigwedge_{A}^{2}U\right)_{n},M\right) \rightarrow 0$$
is split exact. Hence we get the following exact sequence of $A$-complexes:
$$0\rightarrow \Hom_{A}\left(\Nor_{A}\left(\Sym_{A}^{2}(W)\right),M\right) \rightarrow \Hom_{A}\left(\Nor_{A}\left(\Sym_{A}^{2}(V)\right),M\right) \rightarrow \Hom_{A}\left(\Nor_{A}\left(U\otimes_{A}V\right),M\right) $$$$ \rightarrow \Hom_{A}\left(\Nor_{A}\left(\textstyle\bigwedge_{A}^{2}U\right),M\right) \rightarrow 0$$
As $\Nor_{A}\left(U\otimes_{A}V\right)$ is an exact $A$-complex of projective modules, we conclude from a dual version of [Ro, Theorem 10.90] that $\Hom_{A}\left(\Nor_{A}\left(U\otimes_{A}V\right),M\right)$ is an exact $A$-complex. On the other hand, we observed that $0\rightarrow \Nor_{A}\left(\Sym_{A}^{2}(V)\right)$ is a homotopy equivalence in $\mathcal{C}_{\geq 0}(A)$, so $\Hom_{A}\left(\Nor_{A}\left(\Sym_{A}^{2}(V)\right),M\right) \rightarrow \Hom_{A}(0,M)=0$ is a homotopy equivalence in $\mathcal{C}_{\geq 0}(A)$, hence a quasi-isomorphism. In particular, we have $H_{i}\left(\Hom_{A}\left(\Nor_{A}\left(\Sym_{A}^{2}(V)\right),M\right)\right)=0$ for every $i\geq 0$, i.e. $\Hom_{A}\left(\Nor_{A}\left(\Sym_{A}^{2}(V)\right),M\right)$ is an exact $A$-complex. Now it follows from the above exact sequence that:
$$H_{-2}\left(\Hom_{A}\left(\Nor_{A}\left(\Sym_{A}^{2}(W)\right),M\right)\right) \cong H_{0}\left(\Hom_{A}\left(\Nor_{A}\left(\textstyle\bigwedge_{A}^{2}U\right),M\right)\right)$$
We observed that $\Nor_{A}\left(\Sym_{A}^{2}(W)\right) \rightarrow \Nor_{A}\left(\Sym_{A}^{2}\left(\Omega_{R\langle X\rangle \mid R}\otimes_{R\langle X\rangle}A\right)\right)$ is a homotopy equivalence in $\mathcal{C}_{\geq 0}(A)$, so
$$\Hom_{A}\left(\Nor_{A}\left(\Sym_{A}^{2}\left(\Omega_{R\langle X\rangle \mid R}\otimes_{R\langle X\rangle}A\right)\right),M\right) \rightarrow \Hom_{A}\left(\Nor_{A}\left(\Sym_{A}^{2}(W)\right),M\right)$$
is a homotopy equivalence in $\mathcal{C}_{\geq 0}(A)$, hence a quasi-isomorphism. In particular, we have:
\small
$$E_{0,-2}^{2} = H_{-2}\left(\Hom_{A}\left(\Nor_{A}\left(\Sym_{A}^{2}\left(\Omega_{R\langle X\rangle \mid R}\otimes_{R\langle X\rangle}A\right)\right),M\right)\right) \cong H_{-2}\left(\Hom_{A}\left(\Nor_{A}\left(\Sym_{A}^{2}(W)\right),M\right)\right)$$
\normalsize
We observed that $\bigwedge_{A}^{2}H_{0}\left(\Nor_{A}(U)\right) \cong H_{0}\left(\Nor_{A}\left(\bigwedge_{A}^{2}U\right)\right)$, so we get:
\small
\begin{equation*}
\begin{split}
 H_{0}\left(\Hom_{A}\left(\Nor_{A}\left(\textstyle\bigwedge_{A}^{2}U\right),M\right)\right) & = \Ker\left(\partial_{0}^{\Hom_{A}\left(\Nor_{A}\left(\textstyle\bigwedge_{A}^{2}U\right),M\right)}\right) = \Ker\left(\Hom_{A}\left(\partial_{1}^{\Nor_{A}\left(\textstyle\bigwedge_{A}^{2}U\right)},M\right)\right) \\
 & \cong \Hom_{A}\left(\Coker\left(\partial_{1}^{\Nor_{A}\left(\textstyle\bigwedge_{A}^{2}U\right)}\right),M\right) = \Hom_{A}\left(H_{0}\left(\Nor_{A}\left(\textstyle\bigwedge_{A}^{2}U\right)\right),M\right) \\
 & \cong \Hom_{A}\left(\textstyle\bigwedge_{A}^{2}H_{0}\left(\Nor_{A}\left(U\right)\right),M\right)
\end{split}
\end{equation*}
\normalsize
Also, we observed that $H_{0}\left(\Nor_{A}(U)\right) \cong \Tor_{1}^{R}(A,A)$. Putting everything together, we get:
\small
\begin{equation*}
\begin{split}
 E_{0,-2}^{2} & = H_{-2}\left(\Hom_{A}\left(\Nor_{A}\left(\Sym_{A}^{2}\left(\Omega_{R\langle X\rangle \mid R}\otimes_{R\langle X\rangle}A\right)\right),M\right)\right) \cong H_{-2}\left(\Hom_{A}\left(\Nor_{A}\left(\Sym_{A}^{2}(W)\right),M\right)\right) \\
 & \cong H_{0}\left(\Hom_{A}\left(\Nor_{A}\left(\textstyle\bigwedge_{A}^{2}U\right),M\right)\right) \cong \Hom_{A}\left(\textstyle\bigwedge_{A}^{2}H_{0}\left(\Nor_{A}(U)\right),M\right) \cong \Hom_{A}\left(\textstyle\bigwedge_{A}^{2}\Tor_{1}^{R}(A,A),M\right)
\end{split}
\end{equation*}
\normalsize
All in all, we get the following exact sequence:
$$0\rightarrow H_{AQ}^{2}(A|R;M)\rightarrow \Ext_{R}^{2}(A,M)\rightarrow \Hom_{A}\left(\textstyle\bigwedge_{A}^{2}\Tor_{1}^{R}(A,A),M\right)\rightarrow H_{AQ}^{3}(A|R;M)\rightarrow \Ext_{R}^{3}(A,M)\qedhere$$
\end{proof}


\end{document}